\DeclareMathAlphabet{\mathpzc}{OT1}{pzc}{m}{it}
\newtheorem{propo}{Proposition}[section]
\newtheorem{lemma}[propo]{Lemma}
\newtheorem{definition}[propo]{Definition}
\newcommand{\<}{\langle}
\renewcommand{\>}{\rangle}
\newcommand{\sign}{\text{sign}}
\newcommand{\reals}{{\mathds R}}
\newcommand{\eqnsection}{\renewcommand{\theequation}{\thesection.\arabic{equation}}
      \makeatletter \csname @addtoreset\endcsname{equation}{section}\makeatother}
\def\l|{\left|\left|}
\def\r|{\right|\right|}
\def\R{\mathbb R}
\def\P{\mathbb P}
\def\E{\mathbb E}
\def\ve{\varepsilon}
\def\de{{\rm d}}
\def\reals{{\mathds R}}
\def\us{\underline{\sigma}}
\def\R{\mbox{\tiny\rm R}}
\def\Par{{\sf P}}
\def\Pc #1{G ^{\tiny\sf{clon}}(n,p,#1)}
\def\Gr{G_{\tiny {\sf R}}}
\def\Gb{G_{\tiny {\sf B}}}
\def\tGr{\widetilde{G}_{\tiny {\sf R}}}
\def\mcut{{\sf mcut}}
\def\ER{Erd\H{o}s-R\'enyi } 
\def\cov{{\rm Cov}}
\def\MaxCut{{\sf MaxCut}} 
\def\Deg{{\mathbf Z}}
\def\Degsec{{\mathbf Y}} 
\def\cen{{\sf{cen}}}
\def\bone{\mathbf{1}}
\def\Parisi{\mathcal{P}}
\def\Xor{{\sf XOR}}
\def\Xorsat{{\sf XORSAT }}
\def\CSP{{\sf CSP}}
\def\bm{\mathbf{m}}
\def\simp{{\sf Sim}}
\def\Pr{{ \sf Pr}}
\def\ba{\mathbf{a} }
\def\RED{{\tiny\sf{RED}}}
\def\BLUE{{\tiny\sf{BLUE}}}
\newtheorem*{lemma*}{Lemma}
\title{Optimization on sparse random hypergraphs and spin glasses}
\author{Subhabrata Sen} 
\address{Microsoft Research New England and MIT Mathematics, \tt{subse@microsoft.com}} 
\thanks{Research partially supported by the William R. and Sara Hart Kimball Stanford Graduate Fellowship}
\date{\today}
\subjclass[2010]{05C80, 68R10, 82B44.}
\keywords{optimization, spin glass, q-cut, XORSAT, bisection.}
\begin{document}

\begin{abstract}
We establish that in the large degree limit, the value of certain optimization problems on sparse random hypergraphs is determined by an appropriate Gaussian optimization problem. This approach was initiated in \cite{dembo2015extremal} for extremal cuts of graphs. The usefulness of this technique is further illustrated by deriving the optimal value for Max $q$-cut on \ER and random regular graphs, Max \Xorsat on \ER hypergraphs, and the min-bisection for the Stochastic Block Model. 
\end{abstract}

\maketitle

\section{Introduction}
The study of combinatorial optimization problems on random instances has a long and rich history--- noteworthy examples include the traveling salesman problem (see \cite{steele1997optimization} and the references therein), shortest path problem \cite{janson1999shortest} , minimum weight spanning tree \cite{beveridge1998spanning, cooper2016spanning, frieze1985spanning,friezethoma2000spanning,janson1995spanning, steele1987spanning}, minimum weight matching \cite{aldous2001assignment,frieze2015assignment,linusson2004assignment,nair2005assignment,wastlund2005assignment,wastlund2009assignment} etc. These problems reveal a number of fascinating characteristics, and have attracted the attention of specialists in statistical physics, probability and computer science. Historically, statistical physicists have often studied these problems using non-rigorous techniques, leading to striking predictions \cite{mezardparisi1987link,MPV,parisi1998conjecture}. Subsequent search for rigorous proofs has directly motivated the development of powerful new tools, thus enriching the subject. 
Some of these problems are algorithmically intractable in the worst case.  The study of random instances has also indirectly inspired new algorithmic breakthroughs for these problems in the average case and has provided a useful benchmark for comparison. 

A special class of optimization problems on graphs comprise finding extreme cuts. These problems are fundamental in combinatorics and theoretical computer science. They are also critical for a number of practical applications \cite{DiazReview,PT} . 
Of particular interest is the \MaxCut\, problem which seeks to partition the vertices of a graph $G=(V,E)$ into $V= V_1 \cup V_2$ such that the number of edges between $V_1$ and $V_2$ is maximized. For random \MaxCut, instances are usually chosen from \ER or random regular graph ensembles. Recall that an \ER random graph $G_n \sim G(n, d/n)$ has $V= [n]$, the edges being added independently with probability $d/n$, whereas a random regular graph $ G_n \sim G^{\R}_n(d)$ is drawn at random from the set of all $d$-regular graphs on $n$ vertices. \cite{dembo2015extremal} studies this problem in the large degree limit, and establishes that for $G_n \sim G(n, d/n)$ or $G_n \sim G^{\R}_n(d)$, with high probability as $n \to \infty$, 
\begin{align}
\frac{\MaxCut(G_n)}{n} = \frac{d}{4} + \Par_* \sqrt{\frac{d}{4}} + o_d(\sqrt{d}), \nonumber
\end{align}
where $\Par_*$ is the ground state energy of the Sherrington-Kirkpatrick model (we refer to \cite{dembo2015extremal} for a definition of the constant $\Par_*$). The first step in the proof is a comparison of the optima 
on sparse \ER graphs with large degrees to that of a Gaussian optimization problem on the complete graph. Subsequently, it is  established that up to lower order corrections in $d$, the $\MaxCut$ has the same behavior on \ER and random regular graphs. In this paper, we generalize the results in \cite{dembo2015extremal} substantially, while simultaneously simplifying the proofs. Before introducing a general framework, we discuss a concrete example which illustrates the usefulness of this approach. 

A \Xor-satisfiability (\Xorsat) problem is specified by the number of variables $n$, and set of $m$ clauses of the form
$x_{i_a(1)} \oplus \cdots \oplus x_{i_a(p)} = b_a$, for $a \in \{1,\cdots, m\}$. Here $b_a \in \{0,1\}$, $\oplus$ denotes mod-2 addition and $\{x_1,\cdots, x_n\}$ is a set of $n$-boolean variables. We consider random instances of the \Xorsat problem, where each sub-collection $\{x_{i_1} , \cdots, x_{i_p} \}$ of variables is included independently with probability 
$d(p-1)!/ n^{p-1}$, and $b_a \sim \dBer(1/2)$ i.i.d. for each equation $a \in \{1, \cdots , m\}$. We say that an instance of the problem is satisfiable if there is an assignment of values to the variables $\{x_1, \cdots , x_n \}$ which satisfies all the equations. Otherwise, the instance is un-satisfiable. This problem has been studied by many authors in the past (see \cite{cuckoo, ibrahimi2015xorsat} and references therein for further motivations and applications). It is well known \cite{dubois2002mandler, cuckoo} that there exists a threshold $d^* = d^*(p)$, independent of $n$, such that for $d<d^*$, a random instance is satisfiable with high probability, while for $d>d^*$, it is unsatisfiable with high probability.

Suppose we fix $d > d^*(p)$ so that we are in the unsatisfiability regime. We wish to determine the maximum proportion of equations which can be typically satisfied in a random \Xorsat instance. We refer to Section \ref{sec:max-xorsat} for  background on this problem. The next lemma makes some progress in this direction. We set $\mathcal{S}(n,p,d)$ to be the maximum number of satisfiable equations in an instance of random \Xorsat with parameters $n,p,d$.

\begin{lemma*}
For $d > d^*(p)$ sufficiently large, with high probability as $n\to \infty$, 
\begin{align}
\frac{\mathcal{S}(n,p,d)}{n} = \frac{d}{2p} +  \frac{\Par_ p}{2} \sqrt{\frac{d }{p}} + o_d(\sqrt{d}). 
\end{align}
\end{lemma*}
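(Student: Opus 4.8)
The plan is to recast Max-\Xorsat as a $p$-spin optimization over $\pm1$ assignments, invoke the large-degree Gaussian comparison (the same one used in \cite{dembo2015extremal}, and the content of the general framework developed below), and then read off the constant from the $p$-spin ground-state energy.

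First I would reformulate. Putting $\sigma_i=(-1)^{x_i}\in\{-1,+1\}$ and $J_a=(-1)^{b_a}$, the $a$-th clause is satisfied exactly when $J_a\prod_{j=1}^p\sigma_{i_a(j)}=1$, so the number of satisfied clauses for an assignment $\sigma$ equals $\frac m2+\frac12\sum_{a=1}^m J_a\prod_{j=1}^p\sigma_{i_a(j)}$, whence
\begin{align}
\mathcal{S}(n,p,d)=\frac m2+\frac12\max_{\sigma\in\{-1,+1\}^n}H_n(\sigma),\qquad H_n(\sigma):=\sum_{|S|=p}\xi_S\,J_S\prod_{i\in S}\sigma_i,\nonumber
\end{align}
where $S$ ranges over $p$-subsets of $[n]$, the $\xi_S$ are i.i.d.\ $\mathrm{Bernoulli}(p_n)$ with $p_n=d(p-1)!/n^{p-1}$, and the $J_S$ are i.i.d.\ uniform signs independent of the $\xi_S$. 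The clause count $m=\sum_S\xi_S$ is binomial with mean $\binom np p_n=(1+o_n(1))\,dn/p$, so $m/n\to d/p$ with high probability; it therefore suffices to prove $\frac1n\max_\sigma H_n(\sigma)=\Par_p\sqrt{d/p}+o_d(\sqrt d)$ with high probability.

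Next I would compare $H_n$ with the Gaussian Hamiltonian $\widetilde H_n(\sigma):=\sqrt{p_n}\sum_{|S|=p}\eta_S\prod_{i\in S}\sigma_i$, $\eta_S$ i.i.d.\ standard normal, obtained by replacing each weight $\xi_SJ_S$ by a Gaussian of the same mean and variance. Both optima concentrate about their means at scale $o_n(n)$: for $\widetilde H_n$ this is Borell--TIS, since $\sigma\mapsto\widetilde H_n(\sigma)$ is $\sqrt{dn/p}$-Lipschitz in $(\eta_S)_S$; for $H_n$ one conditions on $m$ and applies bounded differences to the $m=\Theta(dn)$ clauses, each of which moves $\max_\sigma H_n$ by $O(1)$, giving fluctuations $O(\sqrt{dn})=o_n(n)$. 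So it is enough to show $\big|\frac1n\E\max_\sigma H_n-\frac1n\E\max_\sigma\widetilde H_n\big|=o_d(\sqrt d)$, uniformly in $n$, which I would establish by a Lindeberg interpolation between the two weight families (they agree to second order) applied to a smooth proxy for the maximum; the error is governed by $\sum_{|S|=p}\E|\xi_SJ_S|^3=\binom np p_n$ together with a truncation restricting attention to near-optimal configurations.

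Finally I would identify the Gaussian problem. The process $\widetilde H_n$ is centered Gaussian with
\begin{align}
\cov\big(\widetilde H_n(\sigma),\widetilde H_n(\tau)\big)=p_n\sum_{|S|=p}\prod_{i\in S}\sigma_i\tau_i=(1+O(d/n))\,\frac{dn}{p}\,R(\sigma,\tau)^p,\qquad R(\sigma,\tau)=\frac{1}{n}\sum_i\sigma_i\tau_i,\nonumber
\end{align}
using that the $p$-th elementary symmetric polynomial of $n$ signs with sum $nR$ equals $(1+o_n(1))(nR)^p/p!$. Up to the $O(d/n)$ relative correction this is $d/p$ times the covariance kernel of the $p$-spin Sherrington--Kirkpatrick Hamiltonian; so by Sudakov--Fernique (to absorb that correction), Gaussian concentration, and the known convergence of the $p$-spin ground-state energy per site to $\Par_p$, one gets $\frac1n\max_\sigma\widetilde H_n(\sigma)\to\sqrt{d/p}\,\Par_p$. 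Combining the three displays yields $\mathcal{S}(n,p,d)/n=\frac d{2p}+\frac{\Par_p}2\sqrt{d/p}+o_d(\sqrt d)$. The hypothesis $d>d^*(p)$ places us in the unsatisfiable phase, so the answer is genuinely below $m/n$, and taking $d$ large is what makes the $o_d(\cdot)$ correction meaningful -- for instance it forces $\frac{\Par_p}2\sqrt{d/p}<\frac d{2p}$.

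The hard part will be the uniform-in-$n$, large-$d$ universality step: because the Bernoulli weights have third absolute moment $p_n$ rather than $p_n^{3/2}$, a naive interpolation bound summed over $\binom np$ hyperedges is far too lossy, and one must exploit -- exactly as in \cite{dembo2015extremal} -- both concentration of the optimizer on a restricted family of configurations and the fact that only the $\sqrt d$-order term is needed, in order to push the error down to $o_d(\sqrt d)\cdot n$. The remaining ingredients -- the two concentration inequalities, the asymptotics of the elementary symmetric polynomial, the convergence $m/n\to d/p$, and the existence of the $p$-spin ground-state limit $\Par_p$ -- are standard.
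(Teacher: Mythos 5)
Your outline is essentially the paper's own route: rewrite $\mathcal{S}(n,p,d)=\frac m2+\frac12\max_{\us}H_n(\us)$ with $\pm1$ spins and signed sparse weights, compare with a mean- and variance-matched Gaussian Hamiltonian by Lindeberg interpolation applied to a smooth (log-partition-function) proxy for the maximum together with concentration, and then identify the Gaussian optimum with $\sqrt{d/p}\,\Par_p$. In the paper the comparison step is exactly Theorem \ref{lemma:comparison} specialized to $f(\sigma_1,\dots,\sigma_p)=\sigma_1\cdots\sigma_p$, $\kappa_1\equiv0$, $\kappa_2=1/(p-1)!$, and the identification is done through the algebraic identity $J_{i_1\cdots i_p}=\frac{\sqrt p}{p!}\sum_\pi G_{\pi(i_1)\cdots\pi(i_p)}$ rather than a covariance computation.

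Two caveats on your version of the last two steps. First, your identification argument is misstated: the claim that the $p$-th elementary symmetric polynomial of $n$ signs with sum $nR$ equals $(1+o_n(1))(nR)^p/p!$ fails when $|R|\lesssim n^{-1/2}$ (at $R=0$ it is typically nonzero), so the covariance of $\widetilde H_n$ is not uniformly $(1+o(1))\frac{dn}{p}R(\us,\underline\tau)^p$ and Sudakov--Fernique cannot be invoked as written. The repair is immediate and is what the paper exploits: the covariance of $\widetilde H_n$ agrees \emph{exactly} with that of $\sqrt{d/p}$ times the $p$-spin Hamiltonian summed over tuples of distinct indices, and the tuples with repeated indices contribute a Gaussian part whose maximum over $\us$ is $o(n)$; then the convergence to $\Par_p$ follows from the Parisi formula as in the paper. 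Second, the difficulty you anticipate in the universality step is not there: with the free-energy proxy at inverse temperature $\beta$, the third derivative in any single weight is $O(\beta^2/n)$, so the ``naive'' Lindeberg bound in your normalization is $O(d\beta^2)$ per site; adding the entropy error $O(1/\beta)$ from replacing the maximum by the free energy and choosing, say, $\beta\asymp d^{-1/3}$ (equivalently $\beta=d^{1/4-\delta}$ in the paper's normalization, where the Hamiltonian carries a $1/\sqrt d$) already yields an error $o_d(\sqrt d)$, uniformly in $n$. No truncation to near-optimal configurations is needed, and none is used in the paper or in \cite{dembo2015extremal}; with that simplification and the covariance fix, your plan goes through.
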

\noindent
$\Par_p$ is an explicit constant defined in Section \ref{sec:max-xorsat}. Here and henceforth in the paper, a sequence of random variables $Z_n = o_d(\sqrt{d})$ with high probability if and only if there exists a deterministic function $f(d) = o_d(\sqrt{d})$ such that $\P[ | Z_n | \leq f(d)] \to 1$ as $n \to \infty$. 
%
%
%
%
%
%

The Lemma above follows, in part from a general result which we introduce next. 
Consider a finite alphabet $\mathcal{X} $ and fix a function $f : \mathcal{X}^p \to \reals$ which is symmetric in its arguments, i.e., $f(x_1,\cdots, x_p) = f(x_{\pi(1)} ,\cdots, x_{\pi(p)})$ where $\pi$ is any permutation of $\{1,\cdots, p\}$. Throughout the paper, $\{ (i_1, \cdots, i_p) : 1 \leq i_1 \neq i_2 \neq \cdots \neq i_p \leq n\}$ will denote subsets of $\{1, 2, \cdots, n\}$ with size $p$. We note that in particular, the indices in any fixed subset are all distinct.  

\begin{definition}[Symmetric Arrays]
An array of real numbers $\{a_{i_1, i_2, \cdots, i_p} : 1\leq i_1 \neq \cdots \neq i_p \leq n\}$ is called symmetric if for any permutation $\pi$, $a_{i_1, \cdots, i_p} = a_{\pi(i_1), \cdots, \pi(i_p) }$. A symmetric array of random variables $\{X_{i_1, \cdots, i_p} : 1 \leq i_1 \neq \cdots \neq i_p \leq n \}$ is defined similarly. 
\end{definition}

We are interested in the following optimization problem 
\begin{align}
V_n = \frac{1}{n} \max_{\us \in A_n} \sum_{i_1 \neq \cdots \neq i_p } A_{i_1, \cdots, i_p} f(\sigma_{i_1} ,\cdots , \sigma_{i_p}), \label{eq:opt}
\end{align}
where $A_n \subset \mathcal{X}^n$ and $\{A_{i_1, \cdots, i_p}\}$ is a symmetric array of random variables such that  $\{A_{i_1, \cdots, i_p} : 1 \leq i_1 < i_2 < \cdots < i_p \leq n\}$ are independent and uniformly bounded. \eqref{eq:opt} arises naturally in many contexts--- see Section \ref{section:examples} for concrete applications. Recall that 
a $p$-uniform hypergraph $G=(V,E)$, where $V$ is the set of vertices and the edge set $E$ consists of $p$-subsets of $V$. In particular, a $2$ uniform hypergraph is a graph $G=(V,E)$. 
%
Typically, the variables $A_{i_1, \cdots, i_p}$ represent symmetric (random) weights on the edges of a sparse random hypergraph. 
Consider symmetric, non-negative, bounded kernels $\kappa$, $\kappa_1$ and $\kappa_2$, and a positive constant $d>0$. Throughout, we assume that the kernels are symmetric maps from $\mathbb{N}^p$ to $\reals^+$. We will assume that $|A_{i_1, \cdots , i_p}| \leq B_U$ and

\begin{align}
\P[A_{i_1, \cdots, i_p} \neq 0]  = d \frac{\kappa(i_1 ,\cdots, i_p)}{n^{p-1}}, \,\,\,\, 
\E[A_{i_1,\cdots, i_p}] = d \frac{\kappa_1(i_1, \cdots, i_p)}{n^{p-1}}, \,\,\,\,
\E[A_{i_1, \cdots, i_p}^2] = d \frac{ \kappa_2(i_1, \cdots , i_p) }{n^{p-1}}. \nonumber 
\end{align}
In most of our applications, the kernels $\kappa$,  $\kappa_1$ and $\kappa_2$ are constants.
The parameter $d$ is intrinsically related to the degree of a vertex in the hypergraph.  We are specifically interested in the case when the parameter $d$ is a large constant independent of $n$. Consider also the following ``gaussian" optimization problem.
\begin{align}
T_n^{\alpha} = \frac{1}{n} \max_{\us \in A_n} \sum_{i_1 \neq \cdots \neq i_p} \frac{J_{i_1,\cdots, i_p}}{n^{(p-1)/2}} f(\sigma_{i_1}, \cdots, \sigma_{i_p}).  \label{eq:gaussian_opt} \\ 
{\textrm{subject to}}\,\,\, \frac{1}{n^p} \sum_{i_1 \neq \cdots \neq i_p} \kappa_1(i_1, \cdots, i_p) f(\sigma_{i_1}, \cdots, \sigma_{i_p}) =\alpha. \nonumber
\end{align}
$\{J_{i_1, \cdots, i_p} \}$ is a symmetric array of random variables such that $\{ J_{i_1,\cdots, i_p}: 1\leq i_1 <  i_2 < \cdots < i_p  \leq n \}$ are independent $\mathcal{N}(0, \kappa_2(i_1, \cdots, i_p))$. Our main result relates these two optimal values, up to a small error when $d$ is large. 

\begin{thm}
\label{lemma:comparison}
For $d$ sufficiently large, with high probability as $n\to \infty$, 
\begin{align}
V_n = \E[\max_{\alpha} ( \alpha d + T_n^{\alpha} \sqrt{d} )] + o_d(\sqrt{d}) . \label{eq:relation_value}
\end{align}
\end{thm}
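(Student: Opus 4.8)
The plan is to follow the route of \cite{dembo2015extremal}: reduce \eqref{eq:relation_value} to a statement about expectations by concentration, recognise the right-hand side as the expected optimum of a \emph{linear} objective driven by a Gaussian array matching the first two moments of $\{A_{i_1,\cdots,i_p}\}$, and then compare the two expected optima by a Lindeberg swap run on a smoothed maximum. \emph{For the concentration}, observe that for each fixed $\sigma$ the objective in \eqref{eq:opt} is linear in the independent entries $\{A_{i_1,\cdots,i_p}:i_1<\cdots<i_p\}$, so $V_n$ is a convex function of that array; a single entry perturbs $V_n$ by at most $p!\,B_U^2/n$, and it is nonzero only with probability $O(d/n^{p-1})$. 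A Bernstein/Talagrand-type bounded-difference inequality using the per-coordinate variances (not the crude $\ell^\infty$ bounds) then gives $\P[|V_n-\E V_n|>t]\le\exp(-c\,nt^2/(d+t))$, so $V_n-\E V_n\to 0$ in probability for each fixed $d$. It therefore suffices to show $\big|\E V_n-\E[\max_\alpha(\alpha d+T_n^\alpha\sqrt d)]\big|=o_d(\sqrt d)$.

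\emph{The Gaussian side is an exact optimum.} Put $\widetilde A_{i_1,\cdots,i_p}:=\E[A_{i_1,\cdots,i_p}]+J_{i_1,\cdots,i_p}\sqrt d\,/n^{(p-1)/2}$; its independent entries have the same mean as those of $A$, and $\E[\widetilde A_{i_1,\cdots,i_p}^2]-\E[A_{i_1,\cdots,i_p}^2]=(d\kappa_1(i_1,\cdots,i_p)/n^{p-1})^2=O(d^2/n^{2(p-1)})$. Since $\E[A_{i_1,\cdots,i_p}]=d\kappa_1(i_1,\cdots,i_p)/n^{p-1}$, the mean of the array contributes exactly $d\,\alpha(\sigma)$ to the objective at $\sigma$, where $\alpha(\sigma):=n^{-p}\sum_{i_1\neq\cdots\neq i_p}\kappa_1(i_1,\cdots,i_p)f(\sigma_{i_1},\cdots,\sigma_{i_p})$, so stratifying $n^{-1}\max_{\sigma\in A_n}\sum_{i_1\neq\cdots\neq i_p}\widetilde A_{i_1,\cdots,i_p}f(\sigma_{i_1},\cdots,\sigma_{i_p})$ over the value of $\alpha(\sigma)$ identifies it with $\max_\alpha(\alpha d+T_n^\alpha\sqrt d)$ (the constraint set $A_n$ plays no role in the randomness and simply restricts the index set of the maximum). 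Hence the remaining task is to compare $\E V_n$, the expected optimum of the linear objective driven by $\{A_{i_1,\cdots,i_p}\}$, with the expected optimum of the same objective driven by $\{\widetilde A_{i_1,\cdots,i_p}\}$.

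\emph{The Lindeberg swap.} Replace each maximum by the proxy $F_\beta(W):=\beta^{-1}\log\sum_{\sigma\in A_n}\exp\!\big(\tfrac{\beta}{n}\sum_{i_1\neq\cdots\neq i_p}W_{i_1,\cdots,i_p}f(\sigma_{i_1},\cdots,\sigma_{i_p})\big)$, which lies within $\beta^{-1}\log|A_n|\le\beta^{-1}n\log|\mathcal{X}|$ of that maximum. Swap one unordered $p$-set at a time and Taylor-expand $F_\beta$ to second order in the swapped coordinate about the common mean: the first-order terms vanish, the second-order discrepancy is $O(d^2/n^{2(p-1)})$ times the second derivative, and the remainder is bounded by the sum of the third absolute central moments of $A_{i_1,\cdots,i_p}$ and $\widetilde A_{i_1,\cdots,i_p}$ times the third derivative. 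Differentiating the log-sum-exp shows these derivatives in the swapped coordinate are $O(\beta/n^2)$ and $O(\beta^2/n^3)$ with constants depending only on $p$ and $B_U$; moreover $\E|A_{i_1,\cdots,i_p}-\E A_{i_1,\cdots,i_p}|^3=O(d/n^{p-1})$ (the entry being bounded and nonzero with probability $O(d/n^{p-1})$), while the Gaussian third moment is the smaller $O((d/n^{p-1})^{3/2})$. Summing the $\binom{n}{p}$ swaps gives a total error $O(\beta^2 d/n^2)$ up to lower-order terms, so $\big|\E V_n-\E[\max_\alpha(\alpha d+T_n^\alpha\sqrt d)]\big|=O(n\log|\mathcal{X}|/\beta+\beta^2 d/n^2)$. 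Taking $\beta\asymp n(\log|\mathcal{X}|/d)^{1/3}$ makes the right-hand side $O(d^{1/3})=o_d(\sqrt d)$, which together with the concentration step yields \eqref{eq:relation_value}.

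\emph{Main obstacle.} The crux is the swap step. On a single hyperedge the weight $A_{i_1,\cdots,i_p}$ is far from Gaussian --- its normalized third moment grows like $n^{(p-1)/2}/\sqrt d$ --- so no ready-made invariance principle applies, and the natural choice $\beta\asymp n$ in the smoothing produces a replacement error of order $d$, which is \emph{not} $o_d(\sqrt d)$. What rescues the argument is that only $\Theta(dn)$ of the $\binom{n}{p}$ hyperedges are ever active, so the accumulated Lindeberg error scales like $\beta^2 d/n^2$ rather than $\beta^2 d/n$, and the intermediate temperature $\beta\asymp nd^{-1/3}$ balances it against the $n/\beta$ smoothing cost so that both pieces are $o_d(\sqrt d)$. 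A lesser technical nuisance is that for $p\ge 3$ the concentration step genuinely requires the sparsity-aware Bernstein inequality rather than plain McDiarmid.
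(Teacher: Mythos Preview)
Your proposal is correct and follows essentially the same route as the paper: concentrate $V_n$ around its mean (the paper uses Efron--Stein rather than Bernstein, but either works), then run a Lindeberg swap on the log-sum-exp smoothing of the maximum, balancing the $O(n\log|\mathcal X|/\beta)$ smoothing cost against the $O(\beta^2 d/n^2)$ third-moment error. The only cosmetic difference is that the paper rescales the array by $1/\sqrt d$ before interpolating (so its inverse-temperature $\beta_{\text{paper}}\asymp d^{1/6}$ corresponds to your $\beta\asymp n d^{-1/3}$ after the change of variables), and it applies the Lindeberg lemma with exactly matched second moments, silently absorbing the $O(d/n^{2(p-1)})$ discrepancy you flag.
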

\noindent
The maximum over an empty set should be interpreted to be $-\infty$ in the statement of Theorem \ref{lemma:comparison}. The event $A$ occurs with high probability(w.h.p.) if $\P[A] \to 1$ as the problem size $n \to \infty$. Theorem \ref{lemma:comparison} relates the value of the optimization problem \eqref{eq:opt} to its gaussian analogue \eqref{eq:gaussian_opt}, which is often more tractable and thus furnishes us with a powerful general method to systematically study a large class of optimization problems, thereby generalizing the first part of the argument in \cite{dembo2015extremal}. Indeed, this principle is extremely robust and independent of the function $f$ in the objective.  From the statistical physics viewpoint, Theorem \ref{lemma:comparison} rigorously establishes a connection between the optimal value of problems on sparse graphs to the ground state of disordered spin-glass models. Even in cases where the ground state energy cannot be rigorously evaluated, one can hope to use probabilistic tools to derive bounds on these quantities, possibly improving on the bounds derived by purely combinatorial techniques. 


For a $p$-uniform hypegraph $G$, we define the ``adjacency matrix" $\{A(i_1, \cdots, i_p) : 1\leq i_1 \neq \cdots \neq i_p \leq n \}$ such that 
\begin{align}
A(i_1, \cdots , i_p ) = 
\begin{cases}
\frac{1}{(p-1)!} & \mbox{if} \hspace{5pt} \{i_1, \cdots, i_p\} \in E, \\
 0 & \mbox{o.w.} 
 \end{cases}\nonumber 
\end{align}
The optimization problems \eqref{eq:opt} have been typically studied on sparse random \ER hypergraphs where $A_{i_1,\cdots, i_p}$ denotes the adjacency matrix of the hypergraph. This special case is recovered by setting $\kappa_1 = 1, \kappa_2 =1/ (p-1)!$ in the setup above. Throughout the rest of this section, whenever we refer to \eqref{eq:opt}, we will assume implicitly this specific choice of the kernels. 
Another class of random instances which are of natural interest comprise the corresponding optimization problems on random regular hypergraphs. Let $G_n(p,d)= ([n], E)$ denote a random $p$-uniform $d$-regular hypergraph on $n$-vertices. As usual, the degree of a vertex $v \in [n]$ denotes the number of hyper-edges $e\in E$ with $v \in e$. We implicitly assume that $p | nd$ and note that $G^{\R}(n,d) = G_n(2,d)$. As above, we seek the optimum value \eqref{eq:opt} and denote this value by $V_n^{\R}$. Our next result derives a Gaussian surrogate for this value.  To this end, define 
\begin{align}
S_n^{\alpha} = \frac{1}{n} \max_{\us \in A_n}&\Big[ \sum_{i_1, \cdots , i_p} \frac{J_{i_1,\cdots, i_p}}{n^{(p-1)/2}} f(\sigma_{i_1}, \cdots, \sigma_{i_p}) - \frac{p}{n^{p-1}} \sum_{i_1,\cdots , i_p} G_{i_1} f(\sigma_{i_1}, \sigma_{i_2} ,\cdots, \sigma_{i_p}) \Big].  \label{eq:regular_opt} \\ 
&{\textrm{subject to}}\,\,\, \frac{1}{n^p} \sum_{i_1 , \cdots , i_p} f(\sigma_{i_1}, \cdots, \sigma_{i_p}) =\alpha , \nonumber
\end{align}
where $\{J_{i_1, \cdots, i_p}\}$ is a symmetric array of random variables defined as follows. Let $\{V_{i_1, \cdots, i_p} : 1\leq i_1 ,\cdots, i_p \leq n\}$ be an array of i.i.d $\dN(0,1)$ random variables and we define 
$
J_{i_1, \cdots, i_p} = \frac{\sqrt{p}}{{p!}} \, \sum_{\pi} V_{\pi(i_1) ,\cdots, \pi(i_p)}, 
$
where $\pi$ is a permutation of $\{1, \cdots, p\}$. This array will be referred to as a ``standard symmetric $p$-tensor" of Gaussian variables.  
 Further, $G_i = \sum_{m_2 , \cdots, m_p} \frac{J_{i, m_2 , \cdots , m_p } }{n^{(p-1)/2}}$. 

\begin{thm}
\label{thm:regular}
For $d$ sufficiently large, with high probability as $n\to \infty$, we have,
\begin{align}
V_n^{\R} = \E \max_{\alpha} [ d \alpha + \sqrt{d} S_n^{\alpha} ] + o_d(\sqrt{d}). 
\end{align}
\end{thm}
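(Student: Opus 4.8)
The plan is to deduce Theorem \ref{thm:regular} from Theorem \ref{lemma:comparison} by realising a random $d$-regular $p$-uniform hypergraph as a Poissonised random hypergraph of matching (expected) degree conditioned on every degree being exactly $d$, and then tracking how this conditioning modifies the Gaussian surrogate. The obstruction to invoking Theorem \ref{lemma:comparison} directly is that in the regular model the hyperedge indicators are not independent, so I would first pass from $G_n(p,d)$ to the configuration (``clone'') model $\Pc{d}$ --- which is the uniform $d$-regular model conditioned on simplicity, an event of probability bounded away from $0$ for fixed $d$ --- and then couple $\Pc{d}$ with a Poissonised $p$-uniform hypergraph conditioned on its degree sequence being identically $d$; conditioning on the degree sequence restores (conditional) independence of the edge weights. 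Since the objective in \eqref{eq:opt} has bounded differences under insertion or deletion of a single hyperedge, $V_n^{\R}$ concentrates about its mean, so it suffices to analyse the conditioned expectation, the coupling errors contributing only $o_d(\sqrt{d})$ to the normalised value.

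\textbf{The Gaussian picture.} Applying (the proof of) Theorem \ref{lemma:comparison} conditionally on the degree sequence replaces the Bernoulli edge weights by the standard symmetric Gaussian $p$-tensor $J_{i_1,\cdots,i_p}/n^{(p-1)/2}$ and turns the regularity constraint --- which in adjacency notation reads $\sum_{m_2,\cdots,m_p} A(i,m_2,\cdots,m_p)=d$ for every $i$ --- into the requirement that the Gaussian ``degrees'' $G_i = \sum_{m_2,\cdots,m_p} J_{i,m_2,\cdots,m_p}/n^{(p-1)/2}$ are all pinned to their common mean. For a jointly Gaussian field this conditioning is an exact linear operation: the pinned field has the law of $J_{i_1,\cdots,i_p}/n^{(p-1)/2}$ minus its orthogonal projection onto the span of $\{G_i\}$, i.e. minus $\sum_{j} \mathrm{Cov}\!\left(J_{i_1,\cdots,i_p}/n^{(p-1)/2},\, G_j\right)(\Sigma_G^{-1})_{\cdot j}\, G_j$, where $\Sigma_G$ is, to the precision we need, the identity (its off-diagonal entries are $O(1/n)$ because $G_i$ and $G_j$ share only $O(n^{p-2})$ hyperedges). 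A short computation of the covariances --- in which the symmetry of $f$ together with the fact that a given vertex can occupy any of the $p$ slots of a hyperedge produces the combinatorial factor $p$ --- shows that, up to terms of lower order in $d$, this correction applied inside the objective is exactly the penalty $-\tfrac{p}{n^{p-1}}\sum_{i_1,\cdots,i_p} G_{i_1} f(\sigma_{i_1},\cdots,\sigma_{i_p})$ of \eqref{eq:regular_opt}, with the \emph{same} Gaussians $J$ entering both terms. The global constraint $\tfrac{1}{n^p}\sum_{i_1,\cdots,i_p} f(\sigma_{i_1},\cdots,\sigma_{i_p})=\alpha$, the leading term $d\alpha$ (the mean contribution), and the first-order Gaussian fluctuation $\sqrt{d}\, S_n^\alpha$ then appear exactly as in Theorem \ref{lemma:comparison}, giving $V_n^{\R} = \E\max_\alpha[\,d\alpha + \sqrt{d}\, S_n^\alpha\,] + o_d(\sqrt{d})$.

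\textbf{Execution and the main obstacle.} Concretely I would: (i) prove concentration of $V_n^{\R}$ by a bounded-difference inequality on the hyperedge set; (ii) set up the coupling chain from $G_n(p,d)$ through the clone model $\Pc{d}$ to a Poissonised $p$-uniform hypergraph conditioned on all degrees equal to $d$, quantifying each cost to $o_d(\sqrt{d})$; (iii) run the Lindeberg/Gaussian-interpolation argument of Theorem \ref{lemma:comparison} conditionally on the degree sequence, carrying the regularity constraint along as an approximate linear constraint on the Gaussian degrees; (iv) relax the hard constraint ``Gaussian degrees pinned'' to the soft Lagrangian penalty of \eqref{eq:regular_opt}, using convex duality and a concentration estimate showing the optimal Lagrange multiplier is $O(1)$ to absorb the discrepancy into $o_d(\sqrt{d})$; and (v) replace the approximately-Gaussian degree field of the Poissonised model by the exact field $G_i$ --- the Poisson-to-Gaussian central limit error being $o_d(\sqrt{d})$ after normalisation --- and optimise over $\alpha$. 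I expect the main obstacle to be steps (ii) and (iv) in combination: one conditions on an event of exponentially small probability, so controlling the change in the optimum of \eqref{eq:opt} under imposed exact regularity requires a local central limit theorem for the joint degree sequence that is sharp enough that the induced change of measure perturbs the optimal value by only $o_d(\sqrt{d})$, uniformly over $\us \in A_n$. With that in hand, the remainder is a careful reprise of the machinery already developed for Theorem \ref{lemma:comparison}.
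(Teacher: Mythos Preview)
Your Gaussian picture is right: the extra term $-\tfrac{p}{n^{p-1}}\sum_{i_1,\cdots,i_p} G_{i_1} f(\sigma_{i_1},\cdots,\sigma_{i_p})$ in \eqref{eq:regular_opt} is exactly the regression of the Gaussian edge field onto its ``degrees'' $G_i$, so conditioning the Gaussian surrogate on fixed degrees should produce $S_n^\alpha$. But your execution has a real gap at the point where you say ``conditioning on the degree sequence restores (conditional) independence of the edge weights.'' It does the opposite. In a Poisson (or Bernoulli) hypergraph the hyperedge indicators are independent; conditioning on all vertex degrees being equal to $d$ couples them and produces precisely the configuration model, whose edges are dependent. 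So you cannot run the Lindeberg argument of Theorem \ref{lemma:comparison} ``conditionally on the degree sequence'' --- the coordinates you would swap are no longer independent, which is the hypothesis Lemma \ref{lemma:lindeberg} needs. You acknowledge the local-CLT difficulty (conditioning on an event of exponentially small probability) as your main obstacle, but it is not merely a sharpness issue: even if you had such an LCLT, you would still need to carry the resulting change of measure through the interpolation, and the latter is built on coordinate-wise independence that the conditioning has destroyed.

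The paper avoids this obstacle entirely by a different mechanism: instead of conditioning the Poisson model \emph{down} to exact regularity, it \emph{embeds} an \ER hypergraph of slightly smaller average degree $d-C\sqrt d\log d$ \emph{inside} the configuration model via a two-colouring of clones (\RED/\BLUE). The $d$-regular graph $G_1$ is then decomposed as $G_2+M$, where $G_2$ is (after a short coupling chain through the Poisson cloning model) an honest \ER hypergraph with independent edges, to which Theorem \ref{lemma:comparison} applies verbatim to give Lemma \ref{lemma:contiguity}. The correction $S_n^\alpha - T_n^\alpha$ does not arise from Gaussian conditioning but from direct analysis of the residual $M=A_{\Gb}-A_{\tGr}$: Azuma--Hoeffding on the edge-exposure martingale plus computation of conditional expectations given the \BLUE\ counts $\Deg$ show that $M^{\cen}$ contributes exactly $\tfrac{p}{n^{p-1}}\sum (Z_{i_1}-\E Z_{i_1})f(\cdot)$ up to $o_d(\sqrt d)$, and then the identity $Z_i=d-\sum_{l_2,\cdots,l_p}A_{G_2}(i,l_2,\cdots,l_p)$ converts this into the $G_i$-term of \eqref{eq:regular_opt}. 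What this buys is that every comparison is between models with independent edges (or is a direct edge-count bound $|E_1\Delta E_2|/n$), so the Lindeberg machinery is only ever invoked in the setting it was designed for, and no exponentially-rare conditioning is needed.
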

\noindent
\cite{dembo2015extremal} deduced a similar Gaussian surrogate for the \MaxCut \, on random $d$-regular graphs for large $d$. However, the proof is tailor made for the \MaxCut\, and Theorem \ref{thm:regular} considerably generalizes this result. 
It is interesting to note that the Gaussian surrogate for the optimization problem on the sparse \ER random hypergraphs \eqref{eq:relation_value}  and $d$-regular random hypergraphs \eqref{eq:regular_opt} are not always the same. However, the next proposition derives sufficient conditions which ensure that the optimal values are the same, up to lower order correction terms in $d$. Moreover, a priori, it is unclear how to optimize \eqref{eq:relation_value}. We observe that the objective in {\tiny\textbf{\sf RHS}} of \eqref{eq:relation_value} consists of two terms--- a deterministic term, which is in general of order $d$, and the random contribution, which is in general of order $\sqrt{d}$. If $d$ is large, one expects that the optimum would be attained by first constraining $\us$ to maximize the deterministic term, and then maximizing the random term subject to this constraint. We can actually formalize this idea under suitable assumptions on the function $f$. 
 
 To this end, we introduce some notation. For each configuration $\us \in \mathcal{X}^n$, we define the vector $\bm(\us)= (m_k(\us) : k \in \mathcal{X})$, where $m_k(\us) = \sum_{i=1}^{n} \bone(\sigma_i = k)/n$ denotes the proportion of spins in $\us$ which are of type $k$. We denote the $|\mathcal{X}|-1$ dimensional simplex by $\simp = \{\mathbf{a} = (a_1, \cdots, a_{|\mathcal{X}|}): a_i \geq 0, \sum_i a_i =1 \}$. Now, given any function $f: \mathcal{X}^p \to \reals$, we define the smooth function $\Psi : \simp \to \reals$, 
 \begin{align}
 \Psi (\ba) = \sum_{j_1, \cdots , j_p \in \mathcal{X}} f(j_1, \cdots, j_p) a_{j_1} \cdots a_{j_p}. \label{eq:defnpsi}
 \end{align}
 
  Note that $\Psi$ may also be thought of as  a function of the independent variables $(a_1, \cdots, a_{|\mathcal{X}|-1} )$ and in this case, we shall denote the function as $\bar{\Psi}$. This will allow us to exploit smoothness properties of $\Psi$. Finally,
 consider the following criteria.
\begin{itemize}
\item[(C1)]  For all $\us \in A_n$, $n\geq 1$, there exists some constant $\eta$ such that for all $j \in \mathcal{X}$, 
\begin{align}
\sum_{j_2, \cdots, j_p} f(j, j_2, \cdots, j_p) m_{j_2}(\us) \cdots m_{j_p}(\us) = \eta  + r_j(\us), \nonumber
 \end{align}
 where the residual vector $r(\cdot) = (r_j (\cdot): j \in \mathcal{X})$ satisfies $\sup_{\us \in A_n} \| r(\us) \|_{\infty} = o_d(1)$. 

\item[(C2)] Assume $A_n = \mathcal{X}^n$, $\Psi(\bm)$ is maximized at some $\bm^* = (m_i^*: i \in \mathcal{X}) \in \simp$ such that $m_i^* >0$ for all $i \in \mathcal{X}$. Further, we assume that $-\grad^2 \bar{\Psi} (\bm) \succeq c \mathbf{I}$  for some constant $c>0$ in a neighborhood of $\bm^*$.  
\end{itemize}

Recall that the optimum value $V_n$ for \ER hypergraphs \eqref{eq:opt} corresponds to the special choice $\kappa_1 =1$ and $\kappa_2 = \frac{1}{(p-1)!}$. The following result compares this optimum to that on regular hypergraphs.

\begin{propo}
\label{prop:equivalence}
Under the conditions (C1) or (C2), $V_n - V_n^{\R} = o_d(\sqrt{d})$. Moreover, under the condition $(C2)$, for $d$ sufficiently large, 
\begin{align}
V_n  = d \Psi(\bm^*) + \sqrt{d}\,\E[T_n^{\Psi(\bm^*)}] + o_d(\sqrt{d}). \nonumber 
\end{align}
\end{propo}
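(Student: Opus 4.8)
The plan is to exploit the representation of the Gaussian problem \eqref{eq:gaussian_opt} as an \emph{unconstrained} optimization with a soft penalty. Write $c_n(\us):=n^{-p}\sum_{i_1\neq\cdots\neq i_p}f(\sigma_{i_1},\dots,\sigma_{i_p})$ and $H_n(\us):=n^{-1}\sum_{i_1\neq\cdots\neq i_p}n^{-(p-1)/2}J_{i_1,\dots,i_p}f(\sigma_{i_1},\dots,\sigma_{i_p})$. Since $\kappa_1\equiv 1$, one has $T_n^{\alpha}=\max\{H_n(\us):\us\in A_n,\ c_n(\us)=\alpha\}$, hence $\max_{\alpha}(\alpha d+\sqrt d\,T_n^{\alpha})=\max_{\us\in A_n}\big(d\,c_n(\us)+\sqrt d\,H_n(\us)\big)$, and Theorem~\ref{lemma:comparison} gives $V_n=\E\big[\max_{\us\in A_n}(d\,c_n(\us)+\sqrt d\,H_n(\us))\big]+o_d(\sqrt d)$. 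Two elementary facts are used throughout: $|c_n(\us)-\Psi(\bm(\us))|=O(1/n)$ uniformly, and (since $\var H_n(\us)=O(1/n)$ uniformly and $|A_n|\le|\mathcal X|^n$) a union bound together with the Borell--TIS inequality give $\max_{\us\in A_n}|H_n(\us)|=O(1)$ both with high probability and in expectation.

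\textbf{Step 1 (equivalence of the two Gaussian problems).} By Theorems~\ref{lemma:comparison} and~\ref{thm:regular} it suffices to compare $\E\max_\alpha(\alpha d+\sqrt d\,T_n^\alpha)$ with $\E\max_\alpha(\alpha d+\sqrt d\,S_n^\alpha)$. Expanding \eqref{eq:regular_opt} and summing out $i_2,\dots,i_p$, the maximand of \eqref{eq:regular_opt} divided by $n$ equals $H_n(\us)-E_n(\us)$, where $E_n(\us)=\tfrac pn\sum_{i}G_i\,\psi_i(\us)$ with $\psi_i(\us):=\sum_{j_2,\dots,j_p\in\mathcal X}f(\sigma_i,j_2,\dots,j_p)m_{j_2}(\us)\cdots m_{j_p}(\us)$, up to $O(1/n)$ terms coming from repeated indices and the identification $\var(J_{i_1,\dots,i_p})=1/(p-1)!=\kappa_2$, under which the two ``bare'' Gaussian objectives agree in law. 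Since $\var(\sum_iG_i)=O(n)$ and $\tfrac1n\sum_iG_i^2=O(1)$ with high probability, it is enough to show that $\psi_i(\us)$ is, to leading order, independent of $i$ along the configurations that matter, for then $E_n(\us)=(\text{const})\cdot\tfrac1n\sum_iG_i+o_d(1)=o_d(1)+o_n(1)$ uniformly. Under (C1) this holds with common value $\eta$ and residual bounded by $\sup_{\us}\|r(\us)\|_\infty=o_d(1)$. Under (C2), for $\us$ with $\bm(\us)$ near $\bm^*$ one has $\psi_i(\us)=\tfrac1p\,\partial_{a_{\sigma_i}}\Psi(\bm^*)+o_n(1)$, and since $\bm^*$ is an interior maximizer of $\Psi$ on $\simp$ the Lagrange condition forces $\partial_{a_k}\Psi(\bm^*)$ to be the same for every $k\in\mathcal X$; the reduction to such $\us$ (for the optimizers of both the \ER and the regular Gaussian problems) is exactly the localization carried out in Step~2, which only uses that the non-penalty part of the maximand is $O(1)$. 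In either case $\sup_{\us\in A_n}|E_n(\us)|=o_d(1)+o_n(1)$ with high probability, so $V_n-V_n^{\R}=o_d(\sqrt d)$.

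\textbf{Step 2 (localization under (C2) and reduction).} Assume (C2), and for simplicity that $\bm^*$ is the unique maximizer of $\Psi$ (the general case follows by handling separately each of the finitely many, isolated, maximizers). Strong concavity of $\bar\Psi$ near $\bm^*$ together with $\Psi<\Psi(\bm^*)$ off a neighbourhood yields, for some $c'>0$, $c_n(\us)\le\Psi(\bm^*)-c'\|\bm(\us)-\bm^*\|^2+O(1/n)$ for all $\us$. Therefore
\[
\max_{\us\in A_n}\big(d\,c_n(\us)+\sqrt d\,H_n(\us)\big)\ \le\ d\,\Psi(\bm^*)+\max_{\us\in A_n}\big(\sqrt d\,H_n(\us)-c'd\,\|\bm(\us)-\bm^*\|^2\big)+o(1),
\]
and, since $\max_{\us}|H_n(\us)|=O(1)$, the inner maximum is attained at some $\us$ with $\|\bm(\us)-\bm^*\|\le C d^{-1/4}$, hence is at most $\sqrt d\,T_n^{\le Cd^{-1/4}}$, where $T_n^{\le\eps}:=\max\{H_n(\us):\us\in A_n,\ \|\bm(\us)-\bm^*\|\le\eps\}$. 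Restricting instead to $\us$ of type exactly $\bm^*$ (rounded) gives the matching lower bound $d\,\Psi(\bm^*)+\sqrt d\,T_n^{\le O(1/n)}-o(1)$, and by strict concavity $T_n^{\le O(1/n)}$ coincides with $T_n^{\Psi(\bm^*)}$. Taking expectations, the Proposition follows once we show
\begin{equation}
\E\big[T_n^{\le\eps}\big]\ \le\ \E\big[T_n^{\le O(1/n)}\big]+o_\eps(1)+o_n(1).\label{eq:pfprop-claim}
\end{equation}

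\textbf{Step 3 (the local fluctuation bound --- the main point).} To prove \eqref{eq:pfprop-claim}, observe that every $\us$ with $\|\bm(\us)-\bm^*\|\le\eps$ agrees with some configuration $\us_0$ of type exactly $\bm^*$ outside a set of at most $C\eps n$ coordinates. Since $H_n(\us)-H_n(\us_0)$ only involves tuples $(i_1,\dots,i_p)$ meeting a coordinate on which $\us,\us_0$ differ, it is (conditionally) a centered Gaussian of variance $O(\eps/n)$; the number of $\us$ obtainable from a fixed $\us_0$ this way is $\exp(O(\eps\log(1/\eps))\,n)$, and the number of admissible $\us_0$ is $\exp(\mathrm{Ent}(\bm^*)\,n+o(n))$. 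A union bound over all these $\exp((\mathrm{Ent}(\bm^*)+O(\eps\log(1/\eps)))\,n)$ pairs gives $\E\big[\sup(H_n(\us)-H_n(\us_0))\big]=O(\sqrt\eps)$, whence $\E[T_n^{\le\eps}]\le\E[T_n^{\le O(1/n)}]+O(\sqrt\eps)+o_n(1)$, which is \eqref{eq:pfprop-claim}. Taking $\eps=Cd^{-1/4}$ in Step~2 then yields $V_n=d\,\Psi(\bm^*)+\sqrt d\,\E[T_n^{\Psi(\bm^*)}]+o_d(\sqrt d)$. I expect Step~3 to be the real obstacle: it is a quantitative continuity statement for the supremum of the Gaussian field $\us\mapsto H_n(\us)$ over a $d^{-1/4}$-neighbourhood of $\bm^*$ in type space, coupled with the continuity of the entropy function. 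The remaining points are routine bookkeeping --- the $O(1/n)$ discrepancies between restricted and unrestricted index sums and between $c_n$ and $\Psi\circ\bm$, passing between high-probability and in-expectation statements via Borell--TIS, the density of the (finite) set of feasible constraint values near $\Psi(\bm^*)$, and the reduction to a neighbourhood of each maximizer.
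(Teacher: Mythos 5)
Your proposal is correct and follows essentially the same route as the paper's proof: under (C1) you bound the extra $\frac{p}{n}\sum_i G_i\psi_i(\us)$ term using $\Var(\sum_i G_i)=O(n)$ and the $o_d(1)$ residual; under (C2) you localize the optimizer's empirical type to an $O(d^{-1/4})$-neighbourhood of $\bm^*$ via strong concavity, compare with the problem restricted to $\Sigma(\bm^*)$ through a Gaussian sup bound on the difference field (per-variance $O(d^{-1/4}/n)$ against $e^{O(n)}$ configurations, giving $o_d(\sqrt d)$ after the $\sqrt d$ prefactor), and reduce the regular case to the (C1) mechanism via the Lagrange stationarity of $\Psi$ at the interior maximizer. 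The only differences are presentational (you phrase the localization through a penalized unconstrained objective and a pathwise $\max_{\us}|H_n(\us)|=O(1)$ bound, where the paper truncates on an event $\mathcal{F}$ and controls the complement by a second-moment estimate), and they do not change the substance of the argument.
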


\begin{remark}
We note that Proposition \ref{prop:equivalence} along with \cite[Lemma 2.4]{dembo2015extremal} re-derives the main results in 
\cite{dembo2015extremal} in a relatively straightforward manner. While it is arguable that the proof of Proposition \ref{prop:equivalence} is similar to those in \cite{dembo2015extremal}, we believe that this proposition is conceptually simpler, 
and clearly illustrates the basic principles at work.  
\end{remark}

The rest of the paper is structured as follows. Section \ref{section:examples} covers certain applications of Proposition \ref{prop:equivalence} and reports some follow up work. Theorem \ref{lemma:comparison} is proved in Section \ref{section:main-proof} while the proof of Theorem \ref{thm:regular} is in Section \ref{section:proof-regular}. Finally, we prove Proposition \ref{prop:equivalence} in Section \ref{section:proof-prop}. Throughout the paper, $C, C_0, C_1$ are used to denote universal constants independent of the problem size $n$. These constants may change from line to line.

\section{Examples}
\label{section:examples}

In this section, we illustrate the usefulness of Proposition \ref{prop:equivalence} by deriving the optimal value
in some examples. We study the random max XORSAT problem in Section \ref{sec:max-xorsat}, the max $q$-cut of sparse random graphs in Section \ref{sec:max-qcut}, and the minimum bisection of the stochastic block model in Section \ref{sec:sbm-mincut}. Some avenues for future research are discussed in Section \ref{section:future}. The proofs of the results are deferred to Section \ref{section:applications-proof}.
%
%
\subsection{Max XORSAT on \ER random hypergraphs}
\label{sec:max-xorsat}

An instance of the Boolean $k$-SAT problem consists of a boolean {{\sf CNF}} formula ({{\sf AND}} of {{\sf OR}}s) in $n$-variables with $k$-literals per clause. The decision version of this problem seeks to find an assignment of values to variables such that all clauses evaluate to {{\sf TRUE}}. 
It is a canonical NP hard problem arising from theoretical computer science \cite{karp1972hardness}.  The search for hard instances has motivated the study of random $k$-SAT problems. This area has witnessed an explosion of activity in the last decade due to confluence of ideas from computer science, statistical physics and mathematics. Statistical Physicists anticipate certain intriguing attributes in random instances of these problems based on predictions from non-rigorous replica and cavity methods (see \cite{krzakala2007gibbs} and references therein). It was conjectured that in random $k$-SAT and a wide variety of  general Constraint Satisfaction Problems (\CSP s), there is a sharp phase transition as the number of constraints grows--- a ``typical" problem is satisfiable with high probability below this threshold while above this threshold, it is unsatisfiable with high probability.  
It is an outstanding mathematical challenge to rigorously establish these predictions. Recently, significant progress has been achieved in this regard \cite{dss_1,dss_2, dss_3, sly_sun_zhang_16}. 

\cite{Coppersmith} initiated a study of the natural optimization version of the $k$-SAT, where one wishes to maximize the number of satisfied clauses. They established a phase transition for this problem (see \cite{Coppersmith} for the exact random ensemble used)--- below the satisfiability threshold all the clauses are satisfied, while above this threshold, the minimum number of unsatisfiable clauses is $\Theta(n)$.
 A natural question in this direction is to determine the maximal number of satisfiable clauses in a random \CSP\, above the satisfiability threshold. In this paper, we answer this question for the random {{\sf XORSAT}} problem, in the large degree limit.

To state our result, we need to introduce some notation from \cite{Pan}. On the binary hypercube $\{-1, +1\}^n$, fix $\beta>0, h\in \reals$ and consider the Gaussian process $H_n(\us) =\beta H_n'(\us) + \beta h \sum_i \sigma_i$, where 
\begin{align}
H_n'(\us) = \sum_{i_1, \cdots, i_p} \frac{G_{i_1, \cdots, i_p}}{n^{(p-1)/2}} \sigma_{i_1}\cdots \sigma_{i_p}, \nonumber
\end{align} 
with $\{G_{i_1,\cdots, i_p} : 1 \leq i_1, \cdots , i_p \leq n\}$  an array of i.i.d. $\dN(0,1)$ random variables. $H_n'$
is a centered Gaussian process with covariance 
\begin{align}
\cov(H_n'( \us) , H_n'(\us')) = n \beta^2 \Big(\frac{\langle \us, \us' \rangle}{n} \Big)^p. \nonumber
\end{align}
The process $\{H_n(\us): \us \in \{\pm 1\}^n \}$ is usually referred to as the $p$-spin model in the spin glass literature. It was conjectured by Parisi and later proved by Talagrand \cite{TalagrandFormula} and Panchenko \cite{panchenko2014mixed}  that the following limit exists with probability $1$.  
\begin{align}
F_p (\beta, h) := \lim_{n \to \infty} \frac{1}{n} \log \sum_{\us \in \{\pm 1\}^n } \exp(H_n(\us)) = \inf_{\mu \in \Pr([0,1])} \mathcal{P}(\mu; \beta, h) . \label{eq:Parisi}
\end{align}
Here $\Pr([0,1])$ denotes the space of all probability measures on $[0,1]$ and $\mathcal{P}(\cdot; \beta, h)$ is the Parisi functional, defined as 
\begin{align}
\mathcal{P}(\mu; \beta,h) = u_{\mu}(0, \beta h) - \frac{p(p-1)\beta^2 }{2} \int_{0}^{1} t^{p-1} \mu([0,t]) \de t, \nonumber 
\end{align}
where $u_{\mu}(\cdot, \cdot)$ solves the Parisi {\tiny{\sf PDE}} 
\begin{align}
&\partial_t u_{\mu}(t,x) + \frac{p(p-1)\beta^2}{2} \Big( \partial_{xx} u_{\mu}(t,x) + \mu([0,t]) (\partial_x u_{\mu}(t,x))^2 \Big) =0 \,\,\,\,\,\, (t,x) \in (0,1) \times \reals, \nonumber\\
&u_{\mu}(1,x) = \log \cosh(x). \nonumber 
\end{align}
See \cite{jagannath2015dynamic} and references therein for regularity properties of the {{\sf PDE}} and uniqueness of solutions. 
Using continuity in $\beta$, it is easy to see that $\lim_{\beta \to \infty} F_p(\beta, h) / \beta$ exists and we define
$\Par_p = \lim_{\beta \to \infty} \frac{F_p(\beta,0)}{\beta}$.
Recently, Auffinger and Chen \cite{auffinger2016groundstate} have derived a direct `zero temperature' variational representation for $\Par_p$. We refer the reader to \cite{auffinger2016groundstate} for further details.  
Having introduced this notation, we re-state our result for the convenience of the reader. Recall the satisfiability threshold $d^*(p)$ introduced above. 

\begin{lemma}
\label{lemma:xorsat}
For $d > d^*(p)$ sufficiently large, with high probability as $n\to \infty$, 
\begin{align}
\frac{\mathcal{S}(n,p,d)}{n} = \frac{d}{2p} +  \frac{\Par_ p}{2} \sqrt{\frac{d }{p}} + o_d(\sqrt{d}). 
\end{align}
\end{lemma}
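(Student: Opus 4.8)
The plan is to recast the max-\Xorsat\ value as an instance of the optimization problem \eqref{eq:opt} and then invoke Theorem \ref{lemma:comparison}. Identify a Boolean variable $x_i\in\{0,1\}$ with a spin $\sigma_i=(-1)^{x_i}\in\{\pm1\}$; the equation $x_{i_a(1)}\oplus\cdots\oplus x_{i_a(p)}=b_a$ is satisfied precisely when $(-1)^{b_a}\sigma_{i_a(1)}\cdots\sigma_{i_a(p)}=1$, so, writing $m$ for the (random) number of equations,
\[ \mathcal{S}(n,p,d)=\frac{m}{2}+\frac12\max_{\us\in\{\pm1\}^n}\sum_{a=1}^{m}(-1)^{b_a}\sigma_{i_a(1)}\cdots\sigma_{i_a(p)}. \]
Take $\mathcal{X}=\{-1,+1\}$, $A_n=\mathcal{X}^n$, $f(x_1,\dots,x_p)=x_1\cdots x_p$ (symmetric), and define the symmetric array $\{A_{i_1,\dots,i_p}\}$ by putting $A_{i_1,\dots,i_p}=(-1)^{b_S}/(p-1)!$ on every ordering of each $p$-subset $S=\{i_1,\dots,i_p\}$ present in the instance, and $A_{i_1,\dots,i_p}=0$ otherwise. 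Since a present subset is counted $p!/(p-1)!=p$ times, $\sum_{i_1\neq\cdots\neq i_p}A_{i_1,\dots,i_p}f(\sigma_{i_1},\dots,\sigma_{i_p})=p\sum_{a}(-1)^{b_a}\sigma_{i_a(1)}\cdots\sigma_{i_a(p)}$, so with $V_n$ as in \eqref{eq:opt} we get $\mathcal{S}(n,p,d)/n=m/(2n)+V_n/(2p)$. Distinct $p$-subsets are independent, each present with probability $d(p-1)!/n^{p-1}$ carrying an independent uniformly random sign $(-1)^{b_S}$, so the array satisfies the hypotheses of Theorem \ref{lemma:comparison}: it is bounded by $B_U=1/(p-1)!$, and $\P[A_{i_1,\dots,i_p}\neq0]=d(p-1)!/n^{p-1}$, $\E[A_{i_1,\dots,i_p}]=0$, $\E[A_{i_1,\dots,i_p}^2]=d/((p-1)!\,n^{p-1})$; i.e.\ $\kappa\equiv(p-1)!$, $\kappa_1\equiv0$, $\kappa_2\equiv1/(p-1)!$.

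Since $\kappa_1\equiv0$, the constraint in \eqref{eq:gaussian_opt} forces $\alpha=0$, whence $\max_\alpha(\alpha d+T_n^\alpha\sqrt d)=\sqrt d\,T_n^0$ and Theorem \ref{lemma:comparison} gives, w.h.p.\ for $d$ large, $V_n=\sqrt d\,\E[T_n^0]+o_d(\sqrt d)$ with $T_n^0=\tfrac1n\max_{\us\in\{\pm1\}^n}\sum_{i_1\neq\cdots\neq i_p}\frac{J_{i_1,\dots,i_p}}{n^{(p-1)/2}}\sigma_{i_1}\cdots\sigma_{i_p}$, where $\{J_{i_1<\cdots<i_p}\}$ is i.i.d.\ $\mathcal{N}(0,1/(p-1)!)$ extended to a symmetric array. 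It remains to show $\E[T_n^0]\to\sqrt p\,\Par_p$. A direct computation gives that the centered Gaussian process $\us\mapsto\sum_{i_1\neq\cdots\neq i_p}\frac{J_{i_1,\dots,i_p}}{n^{(p-1)/2}}\sigma_{i_1}\cdots\sigma_{i_p}$ has covariance $\frac{p}{n^{p-1}}\langle\us,\us'\rangle^p$ up to an additive $O(1)$ error (relative error $O(1/n)$) coming from tuples with a repeated coordinate; the leading term is exactly the covariance of $\sqrt p\,H_n'$, the pure $p$-spin Hamiltonian of \eqref{eq:Parisi}. In fact one can couple the two so that the former is the restriction of $\sqrt p\,H_n'$ to distinct-index tuples, the discarded part being a centered Gaussian field on $\{\pm1\}^n$ with variances $O(1)$, hence of expected supremum $O(\sqrt n)$ by a union bound; thus $\E[T_n^0]=\tfrac{\sqrt p}{n}\E[\max_{\us}H_n'(\us)]+o_n(1)$. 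Finally, from $\max_{\us}H_n'(\us)\le\beta^{-1}\log\sum_{\us}e^{\beta H_n'(\us)}\le\max_{\us}H_n'(\us)+n\beta^{-1}\log2$ together with \eqref{eq:Parisi}, letting $n\to\infty$ and then $\beta\to\infty$, one gets $\tfrac1n\E[\max_{\us}H_n'(\us)]\to\lim_{\beta\to\infty}F_p(\beta,0)/\beta=\Par_p$, so $\E[T_n^0]\to\sqrt p\,\Par_p$.

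To conclude, $m$ is a sum of $\binom{n}{p}$ independent Bernoulli variables with parameter $d(p-1)!/n^{p-1}$, so $\E[m]=nd/p+O(d)$ and its variance is $O(nd)$, whence $m/n\to d/p$ w.h.p.\ by Chebyshev. Combining the three displays above,
\[ \frac{\mathcal{S}(n,p,d)}{n}=\frac{d}{2p}+\frac{1}{2p}\,\sqrt p\,\Par_p\,\sqrt d+o_d(\sqrt d)=\frac{d}{2p}+\frac{\Par_p}{2}\sqrt{\frac{d}{p}}+o_d(\sqrt d) \]
w.h.p., which is the claim (the restriction $d>d^*(p)$ holds automatically once $d$ is large). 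I expect the crux to be the identification $\E[T_n^0]\to\sqrt p\,\Par_p$: it requires carefully tracking the combinatorial prefactors, controlling the repeated-coordinate contribution by a Gaussian comparison / union-bound argument, and invoking the zero-temperature limit of the Parisi formula. The reduction to \eqref{eq:opt} and the verification of the hypotheses of Theorem \ref{lemma:comparison} are routine bookkeeping.
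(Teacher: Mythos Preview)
Your proof is correct and follows essentially the same route as the paper: recast \Xorsat\ as an instance of \eqref{eq:opt} with $\kappa_1\equiv 0$ and $\kappa_2\equiv 1/(p-1)!$, apply Theorem \ref{lemma:comparison}, and identify the resulting Gaussian optimum with $\sqrt{p}\,\Par_p$. The only cosmetic difference is that where you argue via a covariance comparison and coupling to $\sqrt{p}\,H_n'$, the paper writes the symmetric tensor directly as $J_{i_1,\dots,i_p}=\frac{\sqrt{p}}{p!}\sum_\pi G_{\pi(i_1),\dots,\pi(i_p)}$ and reads off the identity algebraically; the content is the same.
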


\subsection{Max $q$-cut in \ER and random regular graphs}
\label{sec:max-qcut}
Consider the random graph $G(n, d/n)$. 
These graphs are typically sparse with $O(n)$ edges and the degree of a typical vertex concentrates around $d$. The Max $q$-cut problem seeks to divide the vertices into $q$-groups such that the number of edges among the parts is maximized. The case $q=2$ is usually referred to as the \MaxCut\, problem on graphs. This class of problems has attracted significant attention from the Theoretical Computer Science and the Mathematics communities. We refer to \cite{dembo2015extremal} for a survey on the existing results for Maxcuts on sparse \ER or random $d$-regular graphs. The $q$-cut has received less attention. \cite{cojaoghlan2003maxkcut} studied the Max $q$-cut and the performance of the Goemans-Williamson SDP relaxation on sparse \ER random graphs. For an \ER random graph $G_n \sim G(n, d/n)$, they prove that there exist constants $0<C_1< C_2$ such that with high probability as $n \to \infty$,  
\begin{align}
 \frac{d}{2}\Big(1 - \frac{1}{q} \Big) + C_1 \sqrt{d}\leq \frac{\MaxCut(G_n,q)}{n} \leq \frac{d}{2}\Big(1 - \frac{1}{q} \Big) + C_2 \sqrt{d}. \nonumber 
\end{align} 

This problem may be formulated as in \eqref{eq:opt} as follows. Each $q$-partition may be encoded by an assignment of ``spin" variables $\sigma_i \in \mathcal{X}= [q]=\{1,\cdots, q\}$, which indicates the membership of the vertex to one of the groups in the partition. In statistical physics terminology, the value of Max $q$-Cut is closely related to the ground state of the Antiferromagnetic Potts model. The next result specifies the asymptotic value of the Max $q$-cut on sparse \ER random graphs.
 To state the result, we introduce some notation from \cite{panchenko2015potts}. We define
\begin{align}
 \Gamma_q &= \{ \gamma : \gamma \textrm{ is a } (q-1) \times (q-1) \textrm{ symmetric positive semidefinite matrix} \}, \nonumber\\
 \Pi &= \{ \pi : [0,1] \to \Gamma_q : \pi  \textrm{ is left continuous }, \pi(x) \leq \pi(x') \textrm{ for } x \leq x' \}, \nonumber  
\end{align}
where $\pi(x) \leq \pi(x')$ means that $\pi(x') - \pi(x) \in \Gamma_q$. Next, we define $\Pi_q = \{ \pi \in \Pi : \pi(0)=0, \pi(1) = \textrm{diag}(1/q , \cdots, 1/q) \}$. Given $\gamma \in \Gamma_q$, we define an expansion map $U: \Gamma_q \to \Gamma_{q+1}$ as follows. For $1\leq k , k' \leq (q-1)$, $U(\gamma)_{k k'} = \gamma_{k.k'}$. Otherwise, we set, for $1\leq k \leq (q-1)$, 
\begin{align}
U(\gamma)_{q,k} = U(\gamma)_{k,q} = \frac{1}{q} - \sum_{k'} \gamma_{k,k'} , \,\,\,,\,
U(\gamma)_{q,q} = - \Big(1- \frac{2}{q} \Big) + \sum_{k, k'} \gamma_{k k'}. \nonumber 
\end{align}
It is easy to note that $U$ is non-decreasing and thus for any $\pi \in \Pi$,  $U(\pi) : [0,1] \to \Gamma_q$ is left-continuous and non-decreasing. 

Armed with the notation introduced above, we define the ``Parisi functional" in this context. 
For some $r \geq 1$, consider two sequences $x_{-1} = 0 \leq x_0  \leq \cdots \leq x_r =1$ and a monotone sequence in $\Gamma_q$, $0 = \gamma_0 \leq \cdots \leq \gamma_r = \textrm{diag}(1/q , \cdots , 1/q)$. We can associate to any such pair a discrete path in $\Pi_q$ such that 
\begin{align}
\pi(x) = \gamma_v, \,\,\, x_{v-1} < x \leq x_v
\end{align}
for $0\leq v \leq r$ with $\pi(0) = 0$. Given such a discrete path, we consider a sequence of independent Gaussian vectors $z_v = (z_v(k))_{k \leq q}$ for $0\leq v \leq r$ with covariances $\cov(z_v) = 2 ( U(\gamma_v) - U(\gamma_{v-1}))$. For any $\lambda \in \reals^{q}$ and $\beta \geq 0$, we set 
\begin{align}
X_r = \log \sum_{k \leq q} \exp \Big( \beta \sum_{v=1}^{r} z_v(k) + \sum_{k' \leq (q-1)} \lambda_{k'} \bone(k=k') \Big). 
\end{align}
Recursively, for $0\leq v \leq r-1$, we define 
\begin{align}
X_v = \frac{1}{x_v} \log \E_v \exp (x_v X_{v+1} ), 
\end{align}
where $\E_v[\cdot]$ denotes the expectation with respect to $z_{v+1}$. If $x_v=0$, we set $X_v= \E_v[X_{v+1}]$. Noting $X_0$ is non-random, we set $\Phi(\beta, \lambda, r, x, \gamma) := X_0$. Finally, we define the Parisi functional 
\begin{align}
\Parisi(\beta,\lambda, r, x, \gamma) = \Phi(\beta,\lambda, r, x , \gamma) - \frac{1}{q} \sum_{k=1}^{ q-1}\lambda_k - \frac{\beta^2}{2} \sum_{v=0}^{r-1} x_v \Big( \| U(\gamma_{v+1}) \|_{\textrm{HS}}^2 - \| U(\gamma_v) \|_{\textrm{HS}}^2 \Big),
\end{align}
where $\| \cdot \|_{\textrm{HS}}$ denotes the Hilbert-Schmidt norm of a matrix. It is easy to see \cite{Pan, panchenko2015potts}
that $\Par_*(q) :=\lim_{\beta \to \infty} \frac{1}{\beta} \inf_{r, \lambda, x ,\gamma} \Parisi(\beta,\lambda, r, x, \gamma) $ exists. 
This allows us to state the following Lemma for this example. 

\begin{lemma}
\label{lemma:Maxqcut}
If $G_n \sim G(n, d/n)$ or $G_n \sim G^{\R}(n,d)$ then
as $n\to \infty$, for $d$ sufficiently large,
\begin{align}
\frac{\MaxCut(G_n,q)}{n} = \frac{d}{2}\Big(1-\frac{1}{q} \Big) + \Par_*(q) \frac{\sqrt{d}}{2} + o_d(\sqrt{d}). 
\end{align}
\end{lemma}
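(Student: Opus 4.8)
The plan is to recognize Max $q$-cut as a special case of \eqref{eq:opt}, to check condition (C2), to apply Proposition \ref{prop:equivalence}, and finally to identify the resulting constrained Gaussian optimum with the ground state energy $\Par_*(q)$ of the Potts spin glass. First I would fix the encoding: take $p=2$, $\mathcal X=[q]$, $A_n=[q]^n$, $f(x_1,x_2)=\tfrac12\bone(x_1\neq x_2)$, and let $\{A_{i_1,i_2}\}$ be the adjacency matrix of $G_n$, i.e.\ the special choice of kernels ($\kappa_1\equiv 1$, $\kappa_2\equiv 1=1/(p-1)!$) recovering the \ER case. If $\sigma_i\in[q]$ records the part of vertex $i$, then $\sum_{i_1\neq i_2}A_{i_1,i_2}f(\sigma_{i_1},\sigma_{i_2})=\sum_{\{i,j\}\in E}\bone(\sigma_i\neq\sigma_j)$, so $V_n=\MaxCut(G_n,q)/n$ for $G_n\sim G(n,d/n)$ and $V_n^{\R}=\MaxCut(G_n,q)/n$ for $G_n\sim G^{\R}(n,d)$.

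Next I would verify (C2). By \eqref{eq:defnpsi}, $\Psi(\ba)=\tfrac12\sum_{j_1\neq j_2}a_{j_1}a_{j_2}=\tfrac12\bigl(1-\sum_{k\in\mathcal X}a_k^2\bigr)$, which is strictly concave on $\simp$ and maximized at the interior point $\bm^*=(1/q,\dots,1/q)$, with $\Psi(\bm^*)=\tfrac12(1-1/q)$. Writing $\bar\Psi$ in the free coordinates $(a_1,\dots,a_{q-1})$ (so $a_q=1-\sum_{k<q}a_k$), a direct computation gives $-\nabla^2\bar\Psi=\mathbf I+\mathbf 1\mathbf 1^{\top}\succeq\mathbf I$ throughout $\simp$, where $\mathbf 1$ is the all-ones vector. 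Hence (C2) holds with $c=1$ and $A_n=[q]^n$, so Proposition \ref{prop:equivalence} yields both $V_n-V_n^{\R}=o_d(\sqrt d)$ and
\begin{align}
V_n=\frac d2\Bigl(1-\frac1q\Bigr)+\sqrt d\,\E\bigl[T_n^{\Psi(\bm^*)}\bigr]+o_d(\sqrt d). \nonumber
\end{align}
It then remains to show $\lim_{n\to\infty}\E\bigl[T_n^{\Psi(\bm^*)}\bigr]=\tfrac12\Par_*(q)$.

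For this last point I would first simplify the Gaussian problem \eqref{eq:gaussian_opt}. With $f=\tfrac12\bigl(1-\bone(\sigma_{i_1}=\sigma_{i_2})\bigr)$ its objective equals $\tfrac12 Z_n-\tfrac12\bar W_n(\us)$, where $Z_n:=\sum_{i_1\neq i_2}J_{i_1,i_2}/\sqrt n$ does not depend on $\us$ and $\bar W_n(\us):=\sum_{i_1\neq i_2}(J_{i_1,i_2}/\sqrt n)\bone(\sigma_{i_1}=\sigma_{i_2})$, while the constraint $\tfrac1{n^2}\sum_{i_1\neq i_2}f(\sigma_{i_1},\sigma_{i_2})=\Psi(\bm^*)$ reads $\sum_{k}m_k(\us)^2=1/q$, i.e.\ $\bm(\us)=\bm^*$. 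Using $\E Z_n=0$ and that $-\bar W_n\ed\bar W_n$ as a Gaussian process on $[q]^n$ (the constraint set being deterministic), one gets $\E\bigl[T_n^{\Psi(\bm^*)}\bigr]=\tfrac1{2n}\E\max_{\us:\bm(\us)=\bm^*}\bar W_n(\us)$. The process $\bar W_n$ restricted to balanced configurations is, up to the normalization built into the definition of $\Parisi$, precisely the Hamiltonian of the Potts spin glass, whose overlaps range over $\Gamma_q$; by Panchenko's Potts free energy formula \cite{panchenko2015potts,Pan} one has $\tfrac1n\E\log\sum_{\us:\bm(\us)=\bm^*}\exp(\beta\bar W_n(\us))\to\inf_{r,\lambda,x,\gamma}\Parisi(\beta,\lambda,r,x,\gamma)$, the vectors $\lambda$ encoding the balance constraint, and sending $\beta\to\infty$ gives $\tfrac1n\E\max_{\us:\bm(\us)=\bm^*}\bar W_n(\us)\to\Par_*(q)$. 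Thus $\lim_n\E\bigl[T_n^{\Psi(\bm^*)}\bigr]=\tfrac12\Par_*(q)$, and substituting into the display above proves the claim for both ensembles.

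The main obstacle is this final identification: matching the normalization of $\bar W_n$ against the Potts Hamiltonian underlying $\Parisi$, justifying the interchange of the $n\to\infty$ and $\beta\to\infty$ limits in passing to the ground state, and handling the balance constraint (the set $\{\bm(\us)=\bm^*\}$ is nonempty only when $q\mid n$; for general $n$ one relaxes it to $\{\|\bm(\us)-\bm^*\|\le\ve\}$, uses continuity of $\alpha\mapsto\E[T_n^\alpha]$ near $\Psi(\bm^*)$, and lets $\ve\downarrow0$). The encoding and the verification of (C2) are routine once Proposition \ref{prop:equivalence} is available.
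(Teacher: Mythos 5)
Your proposal is correct and follows essentially the same route as the paper: encode Max $q$-cut in the framework of \eqref{eq:opt}, verify (C2) for $\Psi(\ba)$ maximized at $\bm^*=\bone/q$, apply Proposition \ref{prop:equivalence}, flip the sign of the Gaussian part by symmetry to pass from $\bone(\sigma_i\neq\sigma_j)$ to $\bone(\sigma_i=\sigma_j)$, and identify the constrained Gaussian optimum with $\Par_*(q)$ via \cite{panchenko2015potts}. The only differences (absorbing the factor $\tfrac12$ into $f$ rather than writing $V_n/2=\MaxCut(G_n,q)/n$, and your more explicit handling of the $\beta\to\infty$ limit and the balance constraint) are cosmetic.
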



\subsection{Min bisection in the stochastic block model}
\label{sec:sbm-mincut}

The planted bisection model or stochastic block model has been extensively studied in computer science to determine the average case behavior of graph bisection heuristics. Given a fixed subset $S \subset [n]= \{1, \cdots, n\}$ with $|S| = n/2$ (we will assume throughout that $n$ is even) and $a>b>0$, the random graph $G(n, a/n , b/n) = ([n], E_n)$  has a vertex set $[n]$, and edges are added independently with 
\begin{align}
\P[(i,j) \in E_n] = \begin{cases}
a/n & \mbox{ if $\{i,j\}\subseteq S$ or $\{i,j\}\subseteq S^c$,}\\
b/n & \mbox{ if $i\in S,\, j\in S^c$ or $i\in S^c ,j\in S$.}
\end{cases}
\end{align}

This model has also been studied extensively in the statistics literature as a testbed for estimation strategies which recover the true community assignments. Recently, the model attracted intense study due some outstanding conjectures by physicists \cite{decelle2011asymptotic}. These conjectures have been established due the efforts of Mossel, Neeman, Sly \cite{mossel2013proof} and Massoulie \cite{massoulie2014community}. The last few years has witnessed frantic activity in this research area and thus instead of attempting to survey the existing literature, we will point the reader to the excellent survey 
in \cite{guedon2015community} for an overview of the existing results. 

In a different direction, CS studies about the performance of bisection algorithms on the planted bisection model have established that for $(a-b)$ large, the planted bisection is the minimal bisection--- however, for $(a-b)$ small, the planted bisection ceases to be the minimum bisection. This leaves open the basic question: 

\textit{What is the bisection width of a graph drawn from the planted bisection model? }

\noindent This question was partially answered by Coja Oghlan et. al. \cite{coja-oghlan_minimumbisection} who established that for $(a-b)$ sufficiently large, the problem can be solved using a local ``warning propagation" algorithm originally introduced in the study of random \CSP s. 
It turns out to be more natural to parametrize this model in terms of the average degree $d= (a+b)/2$ and the Signal-to-Noise-Ratio  (SNR) parameter $\xi = (a-b)/ \sqrt{2(a+b)} >0$. For example, given the graph, ``non-trivial" inference about the correct community memberships is possible if and only if $\xi >1$ \cite{mossel2013proof,massoulie2014community}. The next lemma estimates the bisection width of the sparse block model in the regime when the average degree $d$ is large, while the SNR parameter $\xi$ is of a constant order. To state our result, we again need to introduce some quantities relevant in this context. It was established in \cite{chen2014mixedferromagnetic} that for a {{ \sf GOE}} matrix $J= (J_{ij})_{1\leq i,j \leq n}$ and $\xi >0$
\begin{align}
\tilde{F}(\beta) := \lim_{n \to \infty} \frac{1}{n} \log \sum_{\us \in \{\pm \}^n} \exp \Big( \beta \Big( \frac{\xi}{n} \langle \bone, \us \rangle^2 + \sum_{i,j} \frac{J_{ij}}{\sqrt{n}} \sigma_i \sigma_j \Big)  \Big) \nonumber
\end{align}
exists and that $\tilde{F}(\beta) = \max_{\mu \in [-1,1]} \Big( F_2( \sqrt{2}\beta, \frac{\xi}{\sqrt{2}}) - \beta \mu^2 \Big)$, where $F_2$ is defined in \eqref{eq:Parisi}. It is easy to see that $\lim_{\beta \to \infty} \tilde{F}(\beta)/ \beta$ exists and we set
$C^* = \lim_{\beta \to \infty} \frac{\tilde{F}(\beta)}{\beta}$.  Denoting the minimum bisection of a graph $G$ as $\mcut(G)$, we have the following result. 

\begin{lemma}
\label{lemma:sbm}
Let $G_n\sim G(n, a/n, b/n)$. Assume that $d$ is sufficiently large. Then we have, as $n \to \infty$, 
\begin{align}
\frac{1}{n}\mcut(G_n) = \frac{d}{4} - C^*\sqrt{d} + o(\sqrt{d}). 
\end{align}
\end{lemma}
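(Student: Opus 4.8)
The plan is to realize $\mcut(G_n)$ as an instance of \eqref{eq:opt}, apply Theorem~\ref{lemma:comparison}, and then identify the resulting Gaussian surrogate with the mixed ferromagnetic/Sherrington--Kirkpatrick model whose free energy $\tilde F$ defines $C^*$. Concretely, encode a bipartition of $[n]$ by a spin configuration $\us\in\{\pm1\}^n$, so that the bisection constraint $|V_1|=|V_2|$ becomes $A_n:=\{\us:\langle\bone,\us\rangle=0\}$; take $p=2$, $f(\sigma,\sigma')=\sigma\sigma'$, and let $A_{ij}=\bone(\{i,j\}\in E_n)$ be the adjacency array of $G_n$. Counting crossing edges, $\tfrac1n\mcut(G_n)=\tfrac{|E_n|}{2n}-\tfrac14 V_n$, where $V_n$ is the value \eqref{eq:opt} for this data. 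With the parametrization $a=d+\xi\sqrt d$, $b=d-\xi\sqrt d$ (equivalently $d=(a+b)/2$, $\xi=(a-b)/\sqrt{2(a+b)}$), the hypotheses of Theorem~\ref{lemma:comparison} hold with $\kappa=\kappa_1=\kappa_2$ and $\kappa_1(i,j)=1+\tfrac{\xi}{\sqrt d}\tau_i\tau_j$, where $\tau_i=\pm1$ records membership in $S$. Since $|E_n|$ is a sum of independent Bernoullis with $\E|E_n|\sim\tfrac{dn}2$, a Chernoff bound gives $\tfrac{|E_n|}{2n}=\tfrac d4+o(1)$ w.h.p., so the task is to pin down $V_n$ to order $\sqrt d$.

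By Theorem~\ref{lemma:comparison}, $V_n=\E\max_{\alpha}(\alpha d+\sqrt d\,T_n^\alpha)+o_d(\sqrt d)$ w.h.p. On $A_n$ the constraint $\tfrac1{n^2}\sum_{i\neq j}\kappa_1(i,j)\sigma_i\sigma_j=\alpha$ reads $\alpha=\tfrac{\xi}{\sqrt d}(\langle\tau,\us\rangle/n)^2+O(1/n)$, so $\alpha$ is, up to $o(1)$, a function of the overlap of $\us$ with the planted partition, and $\alpha d=\tfrac{\sqrt d}{n}\cdot\tfrac{\xi}{n}\langle\tau,\us\rangle^2+o(1)$. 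Pulling this into the maximum,
\begin{align}
\max_{\alpha}(\alpha d+\sqrt d\,T_n^\alpha)=\frac{\sqrt d}{n}\max_{\us\in A_n}\Big[\frac{\xi}{n}\langle\tau,\us\rangle^2+\sum_{i\neq j}\frac{J_{ij}}{\sqrt n}\sigma_i\sigma_j\Big]+o(1).\nonumber
\end{align}
Two reductions follow: (i) since $\kappa_2(i,j)-1=O(1/\sqrt d)$, replacing the Gaussian variances $\kappa_2(i,j)$ by $1$ perturbs the bracket by a centered Gaussian chaos whose maximum over the $2^n$ configurations has expectation $O(1)=o_d(\sqrt d)$; and (ii) the per-coordinate reflection $\sigma_i\mapsto\tau_i\sigma_i$ is a bijection of $\{\pm1\}^n$ that carries $\langle\tau,\us\rangle$ to $\langle\bone,\us\rangle$, carries $A_n$ to $\{\us:\langle\tau,\us\rangle=0\}$, and preserves the law of the Gaussian array. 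Hence $\E\max_{\alpha}(\alpha d+\sqrt d\,T_n^\alpha)=\sqrt d\,\E\big[\tfrac1n\max_{\us:\langle\tau,\us\rangle=0}(\tfrac{\xi}{n}\langle\bone,\us\rangle^2+\sum_{i\neq j}\tfrac{W_{ij}}{\sqrt n}\sigma_i\sigma_j)\big]+o_d(\sqrt d)$, with $W$ a symmetric array of independent $\mathcal N(0,1)$ entries.

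It remains to (a) drop the single linear constraint $\langle\tau,\us\rangle=0$ and (b) identify the limiting ground state with $C^*$. For (b), write $g_n(\us)$ for the bracketed Hamiltonian without the constraint; its exponent differs from the one in the definition of $\tilde F$ only through diagonal terms and the {\sf GOE}-versus-i.i.d.\ normalization of the Gaussian matrix, an $O(1)$ additive change, so $\tfrac1n\log\sum_{\us}e^{\beta g_n(\us)}\to\tilde F(\beta)$. The uniform sandwich $\tfrac1n\max_{\us}g_n\le\tfrac1{\beta n}\log\sum_{\us}e^{\beta g_n}\le\tfrac1n\max_{\us}g_n+\tfrac{\log2}{\beta}$ then forces $\tfrac1n\max_{\us}g_n\to\lim_{\beta\to\infty}\tilde F(\beta)/\beta=C^*$, and Gaussian concentration of the maximum gives $\E[\tfrac1n\max_{\us}g_n]\to C^*$ as well. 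For (a), ``$\le$'' is immediate; for ``$\ge$'', since $\tau$ is balanced the term $\tfrac{\xi}{n}\langle\bone,\us\rangle^2$ does not push $\langle\tau,\us\rangle$ away from $0$, and a concentration estimate for the linear statistic $\langle\tau,\us\rangle$ under the Gibbs measure $\propto e^{\beta g_n}$ shows that the slice $\{\langle\tau,\us\rangle=0\}$ carries mass at least $n^{-C}$, so the constrained and unconstrained partition functions agree in $\tfrac1{\beta n}\log$; sending $\beta\to\infty$ removes the constraint at $o(1)$ cost. Assembling the pieces (and tracking the overall normalization against the definition of $C^*$) yields $V_n=C^*\sqrt d+o_d(\sqrt d)$, hence $\tfrac1n\mcut(G_n)=\tfrac d4-C^*\sqrt d+o_d(\sqrt d)$.

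The hard part will be step (a): bounding the cost of the bisection (balance) constraint. A hands-on alternative is to repair a near-optimal $\us$ by flipping $O(\sqrt n)$ coordinate pairs, one in each planted block, to enforce $\langle\tau,\us\rangle=0$ while fixing $\langle\bone,\us\rangle$; but controlling the resulting loss needs an a priori bound $|\langle\tau,\us^{*}\rangle|=o(n^{3/4})$ on the optimizer, which is again an overlap-concentration statement for the mixed ferromagnetic/SK Gibbs measure. Establishing this cleanly, together with the interchange of the $n\to\infty$, $\beta\to\infty$ and $d\to\infty$ limits and the precise bookkeeping of the normalization constant against the definition of $C^*$, is where the real work lies.
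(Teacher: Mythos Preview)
Your outline matches the paper's structure: rewrite $\mcut$ as a bilinear spin optimization, pass to a Gaussian surrogate, change variables $\sigma_i\mapsto\tau_i\sigma_i$, and then remove the linear balance constraint to land on the unconstrained Curie--Weiss/SK ground state that defines $C^*$. Two remarks on the parts that go through: (1) the paper does \emph{not} invoke Theorem~\ref{lemma:comparison} directly here, precisely because your kernels $\kappa_1=\kappa_2=1+\tfrac{\xi}{\sqrt d}\tau_i\tau_j$ depend on $d$; it instead proves a bespoke comparison (Theorem~\ref{thm:comparison-sbm}) via two interpolations, one Lindeberg step from $A^{\cen}/\sqrt d$ to a Gaussian matrix with the matching block variances, and one Gaussian interpolation equalizing those variances. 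Your reduction (i) (``perturb by a centered Gaussian chaos'') is not literally correct---the difference of two Gaussian sups is not a Gaussian chaos---but the intended bound is exactly the second interpolation.

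The genuine gap is step (a), and here the paper's argument is both simpler and different from either route you sketch. You propose either a Gibbs-measure concentration bound for $\langle\tau,\us\rangle$ (which at low temperature for SK-type models is not available off the shelf) or a flipping argument that you say requires an a~priori overlap bound on the optimizer. The paper's observation is that no such analytic input is needed: the law of the Hamiltonian $\tfrac{\xi}{n}\langle\bone,\us\rangle^2+\sum_{i,j}\tfrac{J_{ij}}{\sqrt n}\sigma_i\sigma_j$ is invariant under permutations of $[n]$, so conditionally on the total magnetization $m^*=\tfrac1n\sum_i\bone(\sigma_i^*=1)$ the optimizer $\us^*$ is uniformly distributed on its level set. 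Hence $\sum_{i\le n/2}\bone(\sigma_i^*=1)$ is hypergeometric and $|\langle\tau,\us^*\rangle|=O(\sqrt{n\log n})$ with high probability, purely by exchangeability. The flipping repair then costs $o(n)$ because the optimizer satisfies $\sigma_i^*=\mathrm{sign}(f_i)$ with $f_i=\sum_j M_{ij}\sigma_j^*$ and $\sum_i|f_i|=\sum_{i,j}M_{ij}\sigma_i^*\sigma_j^*\le\|M\|_{\mathrm{op}}\,n=O(n)$, so at least $9n/10$ coordinates have $|f_i|$ uniformly bounded and can be flipped at $O(1)$ cost each; the interaction among the $O(\sqrt{n\log n})$ flipped coordinates is handled by a crude union bound. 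This symmetry-plus-local-field argument is the missing idea in your proposal.
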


\begin{remark} 
We note that a similar strategy has been used in \cite{montanari2015semidefinite} to analyze the performance of some semidefinite programs in this context.  
\end{remark}

\subsection{Follow up work}
\label{section:future}
%
Since the submission of the initial draft, some subsequent papers have used this framework and similar ideas to study combinatorial problems. We take this opportunity to briefly review these new results. In a joint work of Aukosh Jagannath, Justin Ko and the author \cite{ko2017potts}, this framework is crucially used to study the MAX $q$-cut on inhomogeneous random graphs. Similarly, in a joint work of Aukosh Jagannath and the author\cite{jagannath2017unbalanced}, these results are crucial for establishing sharp comparison inequalities between unbalanced cuts on sparse \ER and random regular graphs.


\cite{panchenko2016sat} studies the value of the random MAX $k$-SAT problem in the large degree limit using some related ideas.  The MAX $3$-SAT problem had been studied earlier in this setup by \cite{leuzzi2001sat} using non-rigorous replica methods. 
%
 \cite{chen2016disorder} uses similar ideas en route to establishing disorder chaos in some diluted models. As a consequence, one can establish the proliferation of the near optimizers in these problems. \cite{chen2017suboptimality} use some associated ideas to establish the sub-optimality  of a class of local algorithms on a class of hypergraph MAX-CUT type problems.


\section{Proofs of Examples}
\label{section:applications-proof}

\textit{Proof of \ref{lemma:xorsat}:}
We first express this problem in the setup of \eqref{eq:opt}. To this end, we note that each variable $x_i$, $1\leq i \leq n$,  may be equivalently encoded by a spin variable $\sigma_i$ taking values in the finite alphabet $\mathcal{X}= \{-1, +1\}$. The total number of equations will be denoted by $m$. We also encode each $b_a$, $a\in \{1,\cdots, m\}$ to take values in $\{-1, +1\}$. Thus $\P[b_a = -1] = \P[b_a = +1] = 1/2$ for each $a \in \{1, \cdots , m \}$. We set $f(\sigma_1, \cdots, \sigma_p) = \sigma_1 \cdots \sigma_p$ and we note that the maximum number of satisfiable clauses may be expressed as 
\begin{align}
\mathcal{S}(n,p, d) = \frac{m}{2} +  \frac{1}{2p} \max_{\us \in \mathcal{X}^n}\sum_{i_1 \neq \cdots \neq i_p} A_{i_1, \cdots, i_p} b_{i_1, \cdots, i_p} f(\sigma_{i_1} , \cdots, \sigma_{i_p}), \nonumber 
\end{align}
where $A$ is the adjacency matrix of the corresponding $p$-uniform hypergraph. Therefore, this optimization problem is covered by the framework 
introduced in \eqref{eq:opt}. In this case, we have, $\kappa \equiv (p-1)!$, $\kappa_1 \equiv 0$ and $\kappa_2 =1/(p-1)!$. Applying Theorem \ref{lemma:comparison}, we have, with high probability as $n\to \infty$, 
\begin{align}
\frac{\mathcal{S}(n,p,d)}{n} = \frac{d}{2p} +  \frac{\sqrt{d}}{2np} \E\Big[ \max_{\us \in \mathcal{X}^n} \sum_{i_1 \neq \cdots \neq i_p} \frac{J_{i_1,\cdots, i_p}}{n^{(p-1)/2}} \sigma_{i_1}\cdots \sigma_{i_p} \Big],
\end{align} 
where $\{J_{i_1, \cdots, i_p} \}$ is a standard symmetric Gaussian $p$-tensor. Now, $J_{i_1,\cdots,i_p} = \frac{\sqrt{p}}{p!} \sum_{\pi} G_{\pi(i_1), \cdots, \pi(i_p)}$, where $\{G_{i_1, \cdots, i_p}\}$ is an array of iid standard Gaussian entries.   This implies that 
\begin{align}
\frac{\mathcal{S}(n,p,d)}{n} = \frac{d}{2p}+\frac{1}{2} \sqrt{\frac{d}{p}}\, \frac{1}{n} \E\Big[ \max_{\us \in \mathcal{X}^n} \sum_{i_1,\cdots, i_p} \frac{G_{i_1,\cdots, i_p}}{n^{(p-1)/2}} \sigma_{i_1}\cdots \sigma_{i_p} \Big] + o(1). \nonumber
\end{align}
The desired result follows immediately from the definition of $\Par_p$ introduced above.

\textit{Proof of Lemma \ref{lemma:Maxqcut}:} 
We will use the condition $(C2)$ of Proposition \ref{prop:equivalence}. To this end, we note that in this case $A_n  = [q]^n$, and setting $f(x,y) = \bone(x \neq y)$, we have,
$V_n/2= \MaxCut(G_n, q)/n $. 
In this case, $\Psi(\bm) = \sum_i m_i (1- m_i)$. It is easy to see that this function attains its unique maximum $(1- 1/q)$ at $\bm^* = \bone/q$. Finally, we have, 
\begin{align}
\bar{\Psi}(\bm) = \sum_{i=1}^{q-1} m_i (1- m_i ) + \Big(\sum_{i=1}^{q-1} m_i \Big) \Big( 1 - \sum_{i=1}^{q-1} m_i \Big) 
= 2 \sum_{i=1}^{q-1} m_i - \sum_{i=1}^{q-1} m_i^2 - \Big( \sum_{i=1}^{q-1} m_i \Big)^2. \nonumber
\end{align}
This immediately implies $- \grad^2 \bar{\Psi}(\bm) = I + \bone \bone^{{\sf T}} \succ I$. This verifies condition $(C2)$. For $\mathbf{p} \in \simp$, we set,  
\begin{align}
\Sigma(\mathbf{p}) = \{ \us \in [q]^n : \sum_{i=1}^{n} \bone(\sigma_i = k ) = n p_k , 1\leq k \leq q \}. \nonumber
\end{align}
Therefore, Proposition \ref{prop:equivalence} implies that 
\begin{align}
\frac{\MaxCut(G_n, q)}{n} &= \frac{d}{2} \Big(1 - \frac{1}{q} \Big) + \frac{\sqrt{d}}{2n} \E\Big[ \max_{\sigma \in \Sigma(\bone/q)} \sum_{i,j} \frac{J_{ij}}{\sqrt{n}} \bone(\sigma_i \neq \sigma_j) \Big] + o_d(\sqrt{d}). \nonumber\\
&= \frac{d}{2} \Big(1 - \frac{1}{q} \Big) + \frac{\sqrt{d}}{2n} \E\Big[ \max_{\sigma \in \Sigma(\bone/q)} \sum_{i,j} \frac{J_{ij}}{\sqrt{n}} \bone(\sigma_i = \sigma_j) \Big] + o_d(\sqrt{d}), \label{eq:maxqcut-intermediate}
\end{align}
where the last equation uses $[\sum_{i,j} J_{ij}/ \sqrt{n} ] /n \pto 0$ as $n \to \infty$ and $(J_{ij})_{\{1\leq i,j \leq n \}} =^d (- J_{ij})_{\{ 1 \leq i, j \leq n \} }$, with $=^d$ denoting equality in distribution. Finally, an application of \cite{panchenko2015potts} implies that as $n \to \infty$, 
\begin{align}
\frac{1}{n} \E\Big[ \max_{\sigma \in \Sigma(\bone/q)} \sum_{i,j} \frac{J_{ij}}{\sqrt{n}} \bone(\sigma_i = \sigma_j) \Big] \to \Par_*(q). \nonumber
\end{align}

Plugging this back into \eqref{eq:maxqcut-intermediate} immediately concludes the proof.

\textit{Proof of Lemma \ref{lemma:sbm}:}
This example is not exactly in the framework of the general problem introduced in Theorem \ref{lemma:comparison}. However, we will establish that the same techniques are invaluable in this case. 
Without loss of generality, we assume that $S=\{1,2, \cdots, n/2 \}$. We can encode each partition by an assignment of spins $\us = (\sigma_1 , \cdots, \sigma_n) \in \{\pm 1\}^n$. The constraint that the two halves must be of equal size enforces that $\sum_i \sigma_i=0$. In this case, we have, denoting the adjacency matrix of $G(n, a/n, b/n)$ by $A$ and setting $A^{\cen} = A - \E[A]$,
\begin{align}
\frac{\mcut(G_n)}{n} &= \frac{1}{2n} \min_{\{\us: \sigma_i=0 \}} \sum_{i,j} A_{ij} \bone(\sigma_i \neq \sigma_j) \nonumber\\
&= \frac{1}{2n} \min_{\{\sigma: \sum \sigma_i =0 \}} \sum_{i,j}\Big\{ \frac{d}{n} \bone(\sigma_i \neq \sigma_j )+ \frac{\xi}{n} \sqrt{d} \chi_{ij} \bone(\sigma_i \neq \sigma_j) + A_{ij}^{\cen} \bone(\sigma_i \neq \sigma_j) \Big\}, \label{eq:sbm_intermediate1}
\end{align}
where $\chi_{ij}= 1$ if both $i,j \in S$ or both $i,j \in S^c$ and $\chi_{ij} = -1$ otherwise. We note that for each $\us \in \{-1, +1\}^n$ satisfying $\sum_i \sigma_i =0$, $\sum_{i,j} \bone(\sigma_i \neq \sigma_j) = n^2/2$. Therefore, we have, from \eqref{eq:sbm_intermediate1}
\begin{align}
\frac{\mcut(G_n)}{n} = \frac{d}{4} + \frac{\sqrt{d}}{2n} \min_{\{\sigma: \sum \sigma_i =0 \}}  \sum_{i,j}  \Big\{ \frac{\xi}{n} \chi_{ij} + \frac{A_{ij}^{\cen}}{\sqrt{d}} \Big\} \bone(\sigma_i \neq \sigma_j). \label{eq:sbm_intermediate2}
\end{align}
We note that for boolean variables, we have, $\bone(\sigma_i \neq \sigma_j) = ( 1- \sigma_i \sigma_j )/2$. Further, we observe that $\sum_{i,j}  \chi_{ij} =0$ and $\Var(\sum_{i,j} A_{ij}^\cen)=O(n)$ which implies that $\frac{1}{n}\sum_{i,j} A_{ij}^{\cen} \pto 0$ as $n\to \infty$. We set $v_i = 1$ if $i \in S$ and $v_i = -1$ otherwise, such that $\chi_{ij} =v_i v_j$. Thus we have, from \eqref{eq:sbm_intermediate2},
\begin{align}
\frac{\mcut(G_n)}{n} = \frac{d}{4} - \frac{\sqrt{d}}{4n} \max_{\{\us: \sum \sigma_i =0 \} } \Big[ \frac{\xi}{n} \langle v, \us \rangle^2 + \frac{1}{\sqrt{d}} \sum_{i,j} A_{ij}^{\cen} \sigma_i \sigma_j \Big]. 
\end{align}
At this point, we employ the following comparison principle, which is the analogue of Theorem \ref{lemma:comparison} in this context. The proof is similar to Theorem \ref{lemma:comparison} and thus we will simply sketch the proof later in the section. It might be useful to study the proof of Theorem \ref{lemma:comparison} before reading the proofs below.  

\begin{thm}
\label{thm:comparison-sbm}
With high probability as $n\to \infty$, we have, 
\begin{align}
\frac{\mcut(G_n)}{n} = \frac{d}{4} - \frac{\sqrt{d}}{4n} \E\Big[  \max_{\{\us: \sum \sigma_i =0 \} }  \Big\{ \frac{\xi}{n} \langle v, \us \rangle^2 + \sum_{i,j} \frac{J_{ij}}{\sqrt{n}} \sigma_i \sigma_j\Big\} \Big] + o_d(\sqrt{d}) , \nonumber
\end{align}
where $(J_{ij})$ is a standard {{\sf GOE}} matrix. 
\end{thm}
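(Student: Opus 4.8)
The plan is to mimic the proof of Theorem~\ref{lemma:comparison} with the combinatorial objective $f(\sigma_i,\sigma_j) = \bone(\sigma_i \neq \sigma_j)$ on the graph $G(n,a/n,b/n)$, keeping track of the rank-one ``signal'' term $\tfrac{\xi}{n}\langle v,\us\rangle^2$ separately. Concretely, starting from the representation
\begin{align}
\frac{\mcut(G_n)}{n} = \frac{d}{4} - \frac{\sqrt{d}}{4n} \max_{\{\us:\,\sum\sigma_i=0\}}\Big[ \frac{\xi}{n}\langle v,\us\rangle^2 + \frac{1}{\sqrt{d}}\sum_{i,j}A_{ij}^{\cen}\sigma_i\sigma_j\Big], \nonumber
\end{align}
I would argue that the centered Bernoulli disorder array $\{A_{ij}^\cen/\sqrt{d}\}$ can be replaced by a GOE array $\{J_{ij}/\sqrt{n}\}$ at a cost that is $o_d(\sqrt d)$ after the external $\sqrt d$ factor. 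The key structural point is that the signal term $\tfrac{\xi}{n}\langle v,\us\rangle^2$ is a bounded, $1$-Lipschitz-in-each-coordinate function of $\us$ that does not involve the disorder, so it simply rides along through every step of the interpolation/comparison argument of Theorem~\ref{lemma:comparison}: one is comparing $\E\max_{\us}[\,g(\us) + \langle X,\phi(\us)\rangle\,]$ for two disorder arrays $X$ with matched first two moments, where $g(\us)=\tfrac{\xi}{n}\langle v,\us\rangle^2$.

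The steps, in order, would be: (i) reduce to comparing the two optimization values with disorder arrays that agree in mean and variance entrywise — here $\E[A_{ij}^\cen/\sqrt d] = 0 = \E[J_{ij}/\sqrt n]$ and $\var(A_{ij}^\cen/\sqrt d) = \tfrac{1}{n}(1+O(1/n))$ versus $\var(J_{ij}/\sqrt n)=\tfrac1n$, so the variances match up to lower order; (ii) run a Lindeberg-type replacement (exactly as in Section~\ref{section:main-proof}) to swap $A_{ij}^\cen$ for a Gaussian array with the same variance, controlling the error by the boundedness $|A_{ij}^\cen|\le B_U$ and the fact that changing one entry perturbs the max by $O(1/n)$ in the relevant normalization; (iii) smooth the $\max$ over the (finitely supported, since $\us\in\{\pm1\}^n$ with the linear constraint) configuration space so that third-derivative bounds apply, then sum the per-entry errors — there are $O(n^2)$ entries, each contributing $O(n^{-3/2})$, giving a total of $O(n^{-1/2}) \to 0$; (iv) absorb the $O(1/n)$ variance mismatch by a Slepian/Gaussian-interpolation comparison between the two Gaussian arrays, again with the signal term inert; (v) finally note the outer $\sqrt d$ multiplier turns an $o_n(1)$ error into $o_n(\sqrt d)$, which is $o_d(\sqrt d)$ uniformly, and invoke concentration of $\tfrac1n\max_{\us}[\cdots]$ around its expectation (bounded-differences / Gaussian concentration, since each $\sigma_i$ and each $J_{ij}$ affects the objective by $O(n^{-1})$ and $O(n^{-1/2})$ respectively) to pass from the random quantity to its expectation, yielding the stated w.h.p.\ identity.

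The main obstacle I anticipate is handling the equality constraint $\sum_i\sigma_i=0$ together with the signal term in the smoothing/concentration steps: unlike in Theorem~\ref{lemma:comparison}, the feasible set $A_n = \{\us:\sum\sigma_i=0\}$ is not a product set, and the quadratic signal $\langle v,\us\rangle^2$ is not of the separable form $\sum_{i_1\ne\cdots\ne i_p} (\cdots) f(\sigma_{i_1},\cdots)$ that the general machinery is phrased for. I would deal with this by treating $g(\us)=\tfrac{\xi}{n}\langle v,\us\rangle^2$ purely as a deterministic, globally bounded (by $\xi n$, hence $O(n)$, matching the $d\alpha$-scale) Lipschitz perturbation of the objective that commutes with all disorder manipulations — it never interacts with the replacement of $A^\cen$ by $J$ — so that the only role of the constraint is to restrict the index set over which the $\max$ is taken, which is harmless for Lindeberg and for bounded-differences concentration. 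A secondary technical point is verifying that the diagonal terms $J_{ii}$ (present in a GOE matrix but with $A_{ii}=0$) contribute only $\tfrac1n\sum_i J_{ii}\sigma_i^2 = \tfrac1n\sum_i J_{ii} \xrightarrow{\mathrm p} 0$, so that including or excluding them is immaterial — this is the GOE analogue of the remark already used for $\tfrac1n\sum_{i,j}J_{ij}/\sqrt n \to 0$ in the proof of Lemma~\ref{lemma:Maxqcut}.
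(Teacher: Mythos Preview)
Your overall architecture --- carry the deterministic rank-one term $\tfrac{\xi}{n}\langle v,\us\rangle^2$ through as an inert additive shift, then run a two-stage interpolation (Lindeberg to a matched-variance Gaussian, followed by a Gaussian-to-GOE comparison) --- is exactly the route the paper takes. Your observations that the constrained domain $\{\sum\sigma_i=0\}$ is harmless for Lindeberg and bounded-differences concentration, and that the diagonal GOE entries contribute only $o_p(1)$, are both correct.

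There is, however, a genuine quantitative gap in steps (i), (iii), (iv), (v), all traceable to one oversight: you have implicitly treated the edge probabilities as uniformly $d/n$, whereas in the block model they are $a/n$ within a block and $b/n$ across, with $a,b = d \pm \xi\sqrt d$ in the regime considered. Hence
\[
\var\!\Big(\frac{A_{ij}^{\cen}}{\sqrt d}\Big) \;=\; \frac{1}{n}\Big(1 \pm \frac{\xi}{\sqrt d}\Big) + O(n^{-2}),
\]
the sign depending on whether $i,j$ lie in the same block --- \emph{not} $\tfrac{1}{n}(1+O(1/n))$ uniformly. The mismatch with GOE is thus $\Theta(\xi/(n\sqrt d))$ per entry, and it is this $d$-dependent inhomogeneity, not any $n$-dependent one, that must be controlled. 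The paper makes this explicit: it introduces an intermediate Gaussian matrix $\mathbf D$ with exactly the block-dependent variances \eqref{eq:UnequalVariances}; Lemma~\ref{lemma:Interpolation1} is the Lindeberg step (your (ii)) with error $O(\beta^2/\sqrt d)$ as in Lemma~\ref{lemma:interpolation} --- not $O(n^{-1/2})$, since the dominant third moment is $\E|A_{ij}^{\cen}/\sqrt d|^3 = O(1/(n\sqrt d))$, summing to $O(n/\sqrt d)$; and Lemma~\ref{lemma:Interpolation2} (imported from \cite{montanari2015semidefinite}) is the Gaussian-to-Gaussian step (your (iv)), with error $O(\sqrt{(a-b)/d}) = O(\sqrt\xi\, d^{-1/4})$. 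Both errors are $o_d(1)$, and after the outer $\sqrt d$ and the smoothing step (Lemma~\ref{lemma:finitetemp_approx}) with $\beta = d^{1/4-\delta}$ the total is $o_d(\sqrt d)$. Your assertion in (v) that the comparison error is $o_n(1)$ for each fixed $d$ would, if true, remove the large-$d$ hypothesis altogether, which is false.
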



Applying Theorem \ref{thm:comparison-sbm}, we have, 
\begin{align}
\frac{\mcut(G_n)}{n} &= \frac{d}{4} - \frac{\sqrt{d}}{4n} \E\Big[  \max_{\{\us: \sum \sigma_i =0 \} }  \Big\{ \frac{\xi}{n} \langle v, \us \rangle^2 + \sum_{i,j} \frac{J_{ij}}{\sqrt{n}} \sigma_i \sigma_j\Big\} \Big] + o_d(\sqrt{d}) \nonumber\\
&= \frac{d}{4} - \frac{\sqrt{d}}{4n} \E\Big[  \max_{\us \in \mathcal{C}_n  }  \Big\{ \frac{\xi}{n} \langle \bone, \us \rangle^2 + \sum_{i,j} \frac{J_{ij}}{\sqrt{n}} \sigma_i \sigma_j\Big\} \Big] + o_d(\sqrt{d}) \nonumber,
\end{align}
where $\mathcal{C}_n = \{ \us: \sum_{i=1}^{n/2} \sigma_i = \sum_{i=n/2+1}^{n} \sigma_i \}$. 
Finally, the proof can be completed by an application of the following lemma. 
\begin{lemma} 
\label{lemma:curieweiss-sk-comparison}
We have, as $n \to \infty$, 
\begin{align}
\frac{1}{n} \E \Big[ \max_{\us \in \mathcal{C}_n  }  \Big\{ \frac{\xi}{n} \langle \bone, \us \rangle^2 + \sum_{i,j} \frac{J_{ij}}{\sqrt{n}} \sigma_i \sigma_j\Big\}  \Big] = \frac{1}{n} \E \Big[ \max_{\us \in \{\pm 1\}^n }  \Big\{ \frac{\xi}{n} \langle \bone, \us \rangle^2 + \sum_{i,j} \frac{J_{ij}}{\sqrt{n}} \sigma_i \sigma_j\Big\}  \Big] + o(1). 
\end{align}
\end{lemma}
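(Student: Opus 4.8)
The plan is as follows. Write $\Phi_n(\us) := \frac{\xi}{n}\langle \bone, \us\rangle^2 + \sum_{i,j}\frac{J_{ij}}{\sqrt{n}}\sigma_i\sigma_j$ for the objective, and observe that $\mathcal{C}_n = \{\us : \langle v, \us\rangle = 0\}$, where $v_i = +1$ for $i \le n/2$ and $v_i = -1$ otherwise, so that $\langle \bone, v\rangle = 0$. Since $\mathcal{C}_n \subseteq \{\pm 1\}^n$, the left side of the asserted identity is automatically at most the right side, so it remains to prove $\frac1n\E[\max_{\us \in \mathcal{C}_n}\Phi_n] \ge \frac1n\E[\max_{\us}\Phi_n] - o(1)$. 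For this I would fix an unconstrained maximizer $\us^*$ and repair the constraint by flipping $k := \tfrac{1}{2}|\langle v, \us^*\rangle|$ of its coordinates lying in the first half $\{1,\dots,n/2\}$, each time choosing a coordinate whose flip moves $\sum_{i\le n/2}\sigma_i$ towards $\sum_{i>n/2}\sigma_i$. Since $\langle v, \us\rangle$ is an even integer that changes by $\pm 2$ under such a flip, and one checks there are always at least $k$ admissible coordinates, this produces some $\hat\us^* \in \mathcal{C}_n$; the whole argument then reduces to showing $\Phi_n(\us^*) - \Phi_n(\hat\us^*) = o(n)$ with high probability.

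The crucial step --- and the one I expect to be the main obstacle --- is to show that $k = o(n)$, i.e., that the unconstrained maximizer has only sublinear overlap with $v$. This is where the structure of the {\sf GOE} is used: conjugating $J$ by a permutation matrix leaves its law unchanged, while permuting the coordinates of $\us$ and of $J$ simultaneously leaves $\Phi_n$ unchanged, so the law of $\us^*$ is invariant under coordinate permutations. (Almost surely $\us^*$ is unique up to a global sign, and every quantity below depends only on $|\langle v,\us^*\rangle|$, so this is unambiguous.) Hence $\langle v, \us^*\rangle$ has the same distribution as $\langle \pi v, \us^*\rangle$ for every permutation $\pi$; averaging the square over a uniformly random $\pi$ and using $\langle \bone, v\rangle = 0$, a short second-moment computation gives
\begin{align}
\E\big[\langle v, \us^*\rangle^2\big] = \E\Big[\, n - \tfrac{\langle \bone, \us^*\rangle^2 - n}{n-1}\,\Big] \le n + 2 . \nonumber
\end{align}
By Chebyshev, $|\langle v, \us^*\rangle| = o(n)$ with high probability, so $k = o(n)$ with high probability. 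Morally, $\Phi_n$ has no built-in preference for the fixed direction $v$ (unlike $\bone$), so $\us^*$ correlates with $v$ no more than a typical configuration would; without this, flipping a linear number of coordinates would perturb the first term of $\Phi_n$ by an amount comparable to $\Phi_n$ itself.

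It then remains to see that flipping $k = o(n)$ coordinates changes $\Phi_n$ by $o(n)$. The term $\frac{\xi}{n}\langle\bone,\us\rangle^2$ changes by at most $\frac{\xi}{n}\big|\langle\bone,\hat\us^*\rangle^2 - \langle\bone,\us^*\rangle^2\big| \le \frac{\xi}{n}\cdot 2k\cdot 2n = 4\xi k = o(n)$, using $|\langle\bone,\cdot\rangle|\le n$. The Gaussian part, upon flipping a set $F$ of coordinates, changes by exactly $-4\sum_{i\in F,\,j\notin F}\frac{J_{ij}}{\sqrt n}\sigma^*_i\sigma^*_j$; for a fixed $\us$ and a fixed $F$ with $|F| = m \le \epsilon n$, this sum is a centered Gaussian with variance $\frac{m(n-m)}{n} \le \epsilon n$, so a union bound over the $e^{O_\epsilon(n)}$ pairs $(\us, F)$ with $|F|\le\epsilon n$ shows that, with high probability, all of them are at most $\delta(\epsilon)\,n$ in absolute value, with $\delta(\epsilon) \to 0$ as $\epsilon \to 0$ (one may take $\delta(\epsilon)$ of order $\sqrt{\epsilon}$, up to lower-order factors, since a Gaussian tail at scale $\delta(\epsilon)n$ against variance $\epsilon n$ beats an $e^{O(n)}$ entropy factor). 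Combining with $k \le \epsilon n$ from the previous step, with high probability and for each fixed $\epsilon>0$,
\begin{align}
\frac1n\Big(\max_{\us\in\{\pm1\}^n}\Phi_n - \max_{\us\in\mathcal{C}_n}\Phi_n\Big) \le \frac1n\big(\Phi_n(\us^*) - \Phi_n(\hat\us^*)\big) \le 4\xi\epsilon + 4\delta(\epsilon) , \nonumber
\end{align}
and sending $\epsilon \to 0$ shows the left side tends to $0$ in probability.

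Finally, to pass to expectations I would observe that $\frac1n\max_{\us\in\{\pm1\}^n}\Phi_n$ and $\frac1n\max_{\us\in\mathcal{C}_n}\Phi_n$ are $O(n^{-1/2})$-Lipschitz functions of the Gaussian disorder $J$, hence concentrated and bounded in $L^2$ uniformly in $n$; the nonnegative difference of the two therefore converges to $0$ in probability and, being uniformly integrable, also in $L^1$, which yields $\frac1n\E[\max_{\us\in\mathcal{C}_n}\Phi_n] = \frac1n\E[\max_{\us\in\{\pm1\}^n}\Phi_n] + o(1)$, as required.
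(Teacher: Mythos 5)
Your proposal is correct, and it shares the paper's overall skeleton (keep the trivial inequality, take the unconstrained maximizer $\us^*$, repair the balance constraint by flipping few coordinates, control the energy cost of the flips, then pass to expectations), but the two technical pillars are genuinely different. For the number of flips, the paper conditions on the magnetization $m^*$ and uses that $\us^*$ is then uniform over configurations with that magnetization, so the half-imbalance is hypergeometric and $O(\sqrt{n\log n})$ w.h.p.; your permutation-invariance/second-moment computation $\E[\langle v,\us^*\rangle^2]\le n+2$ is the same exchangeability idea in a different guise, and in fact also gives $O_P(\sqrt n)$, though you only use the weaker bound $\epsilon n$. The real divergence is in bounding the flip cost: the paper exploits the stationarity of the maximizer, $\sigma_i^*=\sign(\sum_j M_{ij}\sigma_j^*)$, together with a spectral-norm bound for the rank-one-deformed {\sf GOE} to ensure most local fields $f_i$ are $O(1)$, flips only coordinates with small $|f_i|$, and then only has to union-bound $\sum_{i,j\in W}|M_{ij}|$ over sets of size $O(\sqrt{n\log n})$; you instead take a uniform union bound over all pairs $(\us,F)$ with $|F|\le\epsilon n$, getting a cost $\delta(\epsilon)n$ with $\delta(\epsilon)\asymp\sqrt\epsilon$, and conclude by the double limit $n\to\infty$ then $\epsilon\to0$. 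Your route is more elementary (no local-field trick, no deformed-matrix spectral bound) at the price of a cruder $o(n)$ rather than $\tilde O(\sqrt n)$ estimate, and your final step (Gaussian concentration/$L^2$ bounds plus uniform integrability to upgrade convergence in probability of the nonnegative gap to convergence of expectations) is actually spelled out more carefully than in the paper, which passes to expectations rather informally. Minor points to keep explicit if you write it up: the sign ambiguity of $\us^*$ (handled, since you only use $\langle v,\us^*\rangle^2$), the availability of at least $k$ admissible coordinates in the first half, and the fact that the union bound legitimately covers the random pair $(\us^*,F)$.
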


It remains to establish Theorem \ref{thm:comparison-sbm} and Lemma \ref{lemma:curieweiss-sk-comparison}. We first outline the proof of Theorem \ref{thm:comparison-sbm} and defer the proof of Lemma \ref{lemma:curieweiss-sk-comparison} to the end of the section.


\textit{Proof of Theorem \ref{thm:comparison-sbm}:}
Given any symmetric matrix $M$ and for any configuration $\us \in \{ \pm 1 \}^n $ satisfying $\sum_i \sigma_i =0$, we define, 
\begin{align}
H(\us, M) = \sum_{i,j} M_{ij} \sigma_i \sigma_j , \,\,\,\,
\Phi (\beta, M) = \log \Big[ \sum_{\us: \sum \sigma_i =0} \exp(\beta H(\us, M) ) \Big]. \nonumber 
\end{align}
We define the symmetric Gaussian matrix $B= \frac{\xi}{n} v v^{{\sf T}} + \frac{J}{\sqrt{n}}$, where $J = (J_{i,j})$ is a standard {{\sf GOE }} matrix. We will establish 
\begin{align}
\Big| \frac{1}{n\beta} \E \Big[ \Phi(\beta, \frac{\xi}{n} v v^{{\sf T}} + \frac{A_G^{\cen}}{\sqrt{d}} ) \Big] - \frac{1}{n \beta} \E\Big[\Phi (\beta, B) \Big] \Big| \leq \frac{C\beta^2}{\sqrt{d}}  \label{eq:comparison-sbm}
\end{align}
for some constant $C>0$. The thesis follows subsequently by using Lemma \ref{lemma:finitetemp_approx} with $|\mathcal{X}|=2$. 
To this end, we proceed in two steps, and define an intermediate Gaussian random matrix
\begin{align}
\mathbf{D}(\xi) = \frac{\xi}{n} vv^{{\sf T}} + \mathbf{U}\, ,
\end{align}
where $\mathbf{U} = \mathbf{U}^{{\sf T}}\in\reals^{n\times n}$ is a Gaussian random
matrix with $\{U_{ij}\}_{1\le i\le j\le n}$ independent zero-mean Gaussian random
variables with
 \begin{align}
\Var(U_{ij}) = \begin{cases}
a[1-a/n]/(nd)& \mbox{ if $\{i,j\}\subseteq S$ or $\{i,j\}\subseteq S^c$,}\\
b[1-b/n]/(n d) & \mbox{ if $i\in S,\, j\in S^c$ or $i\in
  S^c ,j\in S$,}
\end{cases}\label{eq:UnequalVariances}
\end{align}
and $U_{ii}=0$.
By triangular inequality
\begin{align}
\left|\frac{1}{n}\E\Phi\Big(\beta, \frac{\xi}{n} v v^{{\sf T}} + A^{\cen}_G/\sqrt{d}\Big)-\frac{1}{n}\E\Phi\big(\beta,\mathbf{B}\big)\right|&\le 
\left|\frac{1}{n}\E\Phi\Big(\beta, \frac{\xi}{n} v v^{{\sf T}} + A^{\cen}_G/\sqrt{d}\Big)-\frac{1}{n}\E\Phi\big(\beta,\mathbf{D}\big)\right|\nonumber\\
&+
\left|\frac{1}{n}\E\Phi\big(\beta,k;\mathbf{D}\big)-\frac{1}{n}\E\Phi\big(\beta,k;\mathbf{B}\big)\right|
\, .
\end{align}

The proof of \eqref{eq:comparison-sbm} follows therefore from the next two results. 
\begin{lemma}\label{lemma:Interpolation1}
With the above definitions, if $n\ge (15d)^2$, then
\begin{align}
\left|\frac{1}{n}\E\Phi\Big(\beta, \frac{\xi}{n} v v^{{\sf T}} + \frac{A^{\cen}}{\sqrt{d}}\Big)-\frac{1}{n}\E\Phi\big(\beta,\mathbf{D}\big)\right|\le \frac{2\beta^3}{\sqrt{d}}\, .
\end{align}
\end{lemma}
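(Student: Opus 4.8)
The plan is to prove Lemma~\ref{lemma:Interpolation1} by a Lindeberg-type replacement argument, exchanging the entries of $A^{\cen}/\sqrt{d}$ for the matched Gaussian entries of $\mathbf U=\mathbf D-\frac{\xi}{n}vv^{{\sf T}}$ one pair at a time. Fix an enumeration $e_1,\dots,e_{N}$, $N=\binom n2$, of the unordered pairs $\{i,j\}$ with $i<j$, and for $0\le\ell\le N$ let $M^{(\ell)}$ be the symmetric matrix $\frac{\xi}{n}vv^{{\sf T}}$ plus the perturbation whose $e_m$-entry equals $U_{e_m}$ for $m\le\ell$ and $A^{\cen}_{e_m}/\sqrt d$ for $m>\ell$ (the diagonal being zero in both $A^{\cen}$ and $\mathbf U$). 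Then $M^{(0)}=\frac{\xi}{n}vv^{{\sf T}}+A^{\cen}/\sqrt d$ and $M^{(N)}=\mathbf D$, so by the triangle inequality it suffices to bound $\sum_{\ell=1}^N\big|\E\Phi(\beta,M^{(\ell)})-\E\Phi(\beta,M^{(\ell-1)})\big|$.

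For the one-pair estimate, fix $\ell$ with $e_\ell=\{i,j\}$ and write $M^{(\ell)}=M_0+U_{e_\ell}E$, $M^{(\ell-1)}=M_0+(A^{\cen}_{e_\ell}/\sqrt d)E$, where $E$ is the symmetric matrix with ones in positions $(i,j)$, $(j,i)$ and zeros elsewhere, and $M_0$ (the common part, carrying a zero in position $e_\ell$) is independent of $U_{e_\ell}$ and of $A^{\cen}_{e_\ell}$. Since $H(\us,M_0+xE)=H(\us,M_0)+2x\,\sigma_i\sigma_j$, the function $g(x):=\Phi(\beta,M_0+xE)$ differs by an $x$-independent constant from $\log\E_{\nu}[e^{2\beta x\,\sigma_i\sigma_j}]$, where $\nu\propto e^{\beta H(\cdot,M_0)}$ is a probability measure on $\{\us:\sum_k\sigma_k=0\}$; hence $g^{(k)}(x)$ is the $k$-th cumulant of $2\beta\sigma_i\sigma_j$ under an exponential tilt of $\nu$. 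Because $\sigma_i\sigma_j\in\{-1,+1\}$ always (the hard constraint $\sum_k\sigma_k=0$ merely restricts the support of $\nu$), $2\beta\sigma_i\sigma_j$ is a two-point random variable, so its third cumulant is bounded uniformly in $M_0$, $x$ and $n$ by $C_0\beta^3$ with $C_0$ an explicit absolute constant (one may take $C_0=32/(3\sqrt3)<7$); thus $\sup_x|g'''(x)|\le C_0\beta^3$. A third-order Taylor expansion of $g$ at $0$, combined with $\E[U_{e_\ell}]=\E[A^{\cen}_{e_\ell}/\sqrt d]=0$, the identity $\var(U_{e_\ell})=\var(A^{\cen}_{e_\ell}/\sqrt d)$ — which is precisely how the variances in \eqref{eq:UnequalVariances} were chosen — and the independence of $M_0$, makes the zeroth-, first- and second-order terms cancel in expectation, leaving
\begin{align}
\big|\E\Phi(\beta,M^{(\ell)})-\E\Phi(\beta,M^{(\ell-1)})\big|\ \le\ \frac{C_0\beta^3}{6}\Big(\E|U_{e_\ell}|^3+\E\big|A^{\cen}_{e_\ell}/\sqrt d\big|^3\Big).\nonumber
\end{align}

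It then remains to bound the third absolute moments and sum. Each $A_{ij}$ is Bernoulli with parameter $p\in\{a/n,b/n\}$, and $a<a+b=2d$ forces $p\le 2d/n$; hence $\E|A^{\cen}_{ij}|^3=p(1-p)\big(p^2+(1-p)^2\big)\le p\le 2d/n$, so $\E|A^{\cen}_{ij}/\sqrt d|^3\le 2/(n\sqrt d)$, while $\var(U_{ij})\le 2/n$ gives $\E|U_{ij}|^3\le 2(2/n)^{3/2}$. Summing over the $N\le n^2/2$ pairs and dividing by $n$,
\begin{align}
\Big|\frac1n\E\Phi\Big(\beta,\tfrac{\xi}{n}vv^{{\sf T}}+\tfrac{A^{\cen}}{\sqrt d}\Big)-\frac1n\E\Phi(\beta,\mathbf D)\Big|\ \le\ \frac{C_0\beta^3}{12}\Big(\frac{2}{\sqrt d}+\frac{2^{5/2}}{\sqrt n}\Big),\nonumber
\end{align}
and since $n\ge(15d)^2$ implies $1/\sqrt n\le 1/(15d)\le 1/(15\sqrt d)$ (for $d\ge1$), the second term is absorbed into the first and, after tracking the numerical constant $C_0$, the right-hand side is at most $2\beta^3/\sqrt d$. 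The one genuinely delicate point is the uniform third-derivative bound $\sup_x|g'''(x)|\le C_0\beta^3$, i.e.\ verifying that the Gibbs cumulant estimate is truly independent of $M_0$ and unaffected by the constraint $\sum_k\sigma_k=0$; one must also keep $C_0$ small enough that the clean bound $2\beta^3/\sqrt d$ emerges under the stated hypothesis on $n$. Everything else is routine bookkeeping with elementary moment inequalities.
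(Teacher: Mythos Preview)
Your proof is correct and follows the same Lindeberg replacement strategy that the paper invokes (it refers the reader to the proof of Lemma~\ref{lemma:interpolation}, which is precisely this argument specialized to $p=2$, $f(\sigma_i,\sigma_j)=\sigma_i\sigma_j$). Your treatment is in fact more explicit than the paper's: you track the third-cumulant constant $C_0=32/(3\sqrt3)$ and the third-moment bounds carefully enough to recover the clean numerical factor $2$, whereas the paper is content with a generic constant.
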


\begin{lemma}
\label{lemma:Interpolation2}
With the above definitions, there exists an absolute constant $n_0$
such that, for all $n\ge n_0$, 
\begin{align}
\label{eq:interpolation2}
\left|\frac{1}{n\beta}\E\Phi\big(\beta,\mathbf{B}\big)-\frac{1}{n \beta}\E\Phi\big(\beta,\mathbf{D}\big)\right|\le 5\sqrt{\frac{a-b}{d}}\, .
\end{align}
\end{lemma}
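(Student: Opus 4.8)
The plan is to avoid Gaussian interpolation for this step: since $\mathbf{B}$ and $\mathbf{D}$ are both Gaussian and differ only through the variance profile of the noise, one can couple them and control the free-energy difference by the operator norm of the difference of the disorders, which produces a bound with no dependence on $\beta$. (An interpolation in the noise would instead cost a spurious factor $\beta^{2}$, which here would be fatal.) This is also what makes the choice of the intermediate matrix $\mathbf{D}$ natural: it is precisely the Gaussian matrix matching the \emph{exact} variance profile of $A^{\cen}/\sqrt d$, so that $\mathbf{D}\to\mathbf{B}$ is purely a matter of flattening a variance profile.

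First I would realize $\mathbf{B}$ and $\mathbf{D}$ on a common probability space. Let $J$ be the standard {\sf GOE} matrix appearing in $\mathbf{B}=\frac{\xi}{n}vv^{{\sf T}}+J/\sqrt{n}$, so $\{J_{ij}\}_{i<j}$ are i.i.d.\ standard Gaussians. For $i<j$ put $c_{ij}=a(1-a/n)/d$ if $i,j$ lie in the same community and $c_{ij}=b(1-b/n)/d$ otherwise, and set $U_{ij}=\sqrt{c_{ij}}\,J_{ij}/\sqrt{n}$, $U_{ii}=0$, extended symmetrically. Then $\{U_{ij}\}_{i\le j}$ are independent centered Gaussians with exactly the variances prescribed in \eqref{eq:UnequalVariances}, so $\mathbf{D}:=\frac{\xi}{n}vv^{{\sf T}}+\mathbf{U}$ has the correct law while sharing the probability space of $\mathbf{B}$; on that space $E:=\mathbf{D}-\mathbf{B}$ is the symmetric Gaussian matrix with $E_{ij}=(\sqrt{c_{ij}}-1)\,J_{ij}/\sqrt{n}$ for $i<j$ and $E_{ii}=-J_{ii}/\sqrt{n}$.

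Next I would run the deterministic comparison. For every $\us\in\{\pm1\}^{n}$, in particular for every balanced $\us$ entering the definition of $\Phi$, we have $H(\us,\mathbf{D})=H(\us,\mathbf{B})+\sum_{i,j}E_{ij}\sigma_{i}\sigma_{j}$ and $\bigl|\sum_{i,j}E_{ij}\sigma_{i}\sigma_{j}\bigr|\le\|E\|_{\mathrm{op}}\,\|\us\|^{2}=n\|E\|_{\mathrm{op}}$. Exponentiating, summing over the balanced configurations and taking logarithms gives, pointwise and for every $\beta>0$, $|\Phi(\beta,\mathbf{D})-\Phi(\beta,\mathbf{B})|\le\beta n\|E\|_{\mathrm{op}}$; taking expectations over the coupling and dividing by $n\beta$ yields $\bigl|\frac{1}{n\beta}\E\Phi(\beta,\mathbf{B})-\frac{1}{n\beta}\E\Phi(\beta,\mathbf{D})\bigr|\le\E\|E\|_{\mathrm{op}}$, which no longer involves $\beta$.

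It then remains to bound $\E\|E\|_{\mathrm{op}}$. Writing $a=d+\xi\sqrt{d}$, $b=d-\xi\sqrt{d}$ (so $a+b=2d$, $a-b=2\xi\sqrt{d}$), one checks $|c_{ij}-1|\le\xi/\sqrt{d}+O(d/n)$ in both cases, hence $|\sqrt{c_{ij}}-1|\le|c_{ij}-1|\le\xi/\sqrt{d}+O(d/n)$, so $E$ is a symmetric Gaussian matrix whose off-diagonal entries have variance at most $(\xi/\sqrt{d}+O(d/n))^{2}/n$ and whose diagonal entries have variance $O(1/n)$. Here the Frobenius norm is useless---it overshoots by a factor $\sqrt{n}$---so one must exploit the Wigner-type structure: a routine $\varepsilon$-net/union-bound argument over the unit sphere (or a standard non-asymptotic operator-norm bound for Gaussian matrices with a bounded variance profile) gives $\E\|E\|_{\mathrm{op}}\le(2+o_{n}(1))\,\xi/\sqrt{d}+o_{n}(1)$. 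Since $a-b=2\xi\sqrt{d}$ forces $\xi/\sqrt{d}=\frac{1}{2}(a-b)/d\le\sqrt{(a-b)/d}$ (because $(a-b)/d<2$), the right-hand side is at most $5\sqrt{(a-b)/d}$ once $n\ge n_{0}$, which is \eqref{eq:interpolation2}. The one genuinely nontrivial ingredient is this operator-norm estimate on $E$; everything else---the coupling and the deterministic sandwich---is elementary, and what remains is the routine bookkeeping needed to absorb the $O(d/n)$ corrections and the sub-leading $n$-dependence into the constant $5$.
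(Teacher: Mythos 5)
Your argument is correct, and it is worth noting that the paper itself gives no proof of this lemma: it simply defers to \cite[Lemma E.2]{montanari2015semidefinite}. The route there is the same in spirit as yours --- couple the two Gaussian disorders on one probability space, bound $|\Phi(\beta,\mathbf{B})-\Phi(\beta,\mathbf{D})|$ pointwise by $\beta n\|\mathbf{B}-\mathbf{D}\|_{\mathrm{op}}$ via the sandwich $|\us^{\sf T}(\mathbf{B}-\mathbf{D})\us|\le n\|\mathbf{B}-\mathbf{D}\|_{\mathrm{op}}$, and then control the expected operator norm of the difference, which is what makes the bound $\beta$-free --- but the coupling there carries the variance mismatch \emph{additively}, i.e.\ the larger-variance entries are obtained by adding an independent Gaussian whose variance is the gap $|c_{ij}-1|/n$; the difference matrix then has entrywise standard deviation of order $\sqrt{(a-b)/d}\,n^{-1/2}$, which is exactly where the $\sqrt{(a-b)/d}$ in \eqref{eq:interpolation2} comes from. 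Your proportional coupling $U_{ij}=\sqrt{c_{ij}}\,J_{ij}/\sqrt{n}$ makes the difference entries of size $|\sqrt{c_{ij}}-1|/\sqrt{n}=O\big((a-b)d^{-1}n^{-1/2}\big)$, so you in fact prove the stronger estimate of order $(a-b)/d+o_n(1)$, which sits comfortably below $5\sqrt{(a-b)/d}$ since $(a-b)/d\le 2$; both couplings are legitimate, and yours buys a sharper constant at no extra cost. Two small caveats. First, the one nontrivial ingredient, $\E\|E\|_{\mathrm{op}}\le (2+o_n(1))\max_{i<j}\sqrt{n\,{\rm Var}(E_{ij})}+o_n(1)$, should be quoted with an explicit constant (e.g.\ a Bandeira--van Handel type bound, or Slepian comparison with a GOE of the dominating entry variance): a crude $\varepsilon$-net bound with an unspecified absolute constant is not automatically sufficient, because in the extreme regime $a-b$ close to $2d$ the slack in your final inequality only tolerates a prefactor up to roughly $7$. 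Second, absorbing the $O(d/n)$ variance corrections and the diagonal contribution makes your $n_0$ depend on $a,b,d$ rather than being absolute; this imprecision is already present in the statement as imported from \cite{montanari2015semidefinite} and is harmless in the regime considered ($d$ fixed, $n\to\infty$), but you should not claim uniformity in the parameters that your estimates do not deliver.
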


The proof of Lemma \ref{lemma:Interpolation1} is the same as that of Lemma \ref{lemma:interpolation} and will thus be omitted. Lemma \ref{lemma:Interpolation2} is proved in \cite[Lemma E.2]{montanari2015semidefinite} and will thus be omitted. 
%
%
%
%
%
Finally, we prove Lemma \ref{lemma:curieweiss-sk-comparison}. 

\textit{Proof of Lemma \ref{lemma:curieweiss-sk-comparison}: }
Trivially, we have $\E [ \max_{\us \in \{\pm 1\}^n }  \{ \frac{\xi}{n} \langle \bone, \us \rangle^2 + \sum_{i,j} \frac{J_{ij}}{\sqrt{n}} \sigma_i \sigma_j\}  ]  \geq \E [ \max_{\us \in \mathcal{C}_n  }  \{ \frac{\xi}{n} \langle \bone, \us \rangle^2 + \sum_{i,j} \frac{J_{ij}}{\sqrt{n}} \sigma_i \sigma_j\}  ]$. To derive the opposite bound, we proceed as follows. 
Let 
\begin{align}
\us^* = \argmax_{\{\pm 1\}^n} \Big[  \frac{\xi}{n} \langle \bone, \us \rangle^2 + \sum_{i,j} \frac{J_{ij}}{\sqrt{n}} \sigma_i \sigma_j \Big]  =  \argmax_{\{ \pm 1 \}^n } \sum_{i,j} M_{ij} \sigma_i \sigma_j . \nonumber 
\end{align}
where $M = (M_{i,j})$ is a symmetric matrix, $\{M_{i,j} : i<j\}$ are independent $\dN(\xi/n , 1/n)$ random variables. The definition of $\us^*$ implies that $\sigma_i^* = \sign ( \sum_j M_{ij} \sigma_j^*)$. Thus setting $f_i = \sum_j M_{ij}\sigma_j^*$, we have,
\begin{align}
\sum_{i,j} M_{i,j} \sigma_i ^* \sigma_j^* = \sum_i |f_i |. \nonumber 
\end{align}
Now, elementary bounds on the spectral norm of a one-rank perturbed random matrix \cite{feral2007largest} implies that with probability $1$, $\sum_i |f_i | \leq C(\xi) n$ for some universal constant $C(\xi)$ independent of $n$. Finally this implies that with probability $1$, the set $R^*= \{ i \in [n]: |f_i| \leq 10 C(\xi) \}$ has size at least $9n/10  $.
We define $m^* = \frac{1}{n} \sum_i \bone(\sigma_i^*=1)$. By the symmetry of the problem, given $m^*$, $\us^*$ is uniformly distributed on $\{ \us: \sum_i \bone(\sigma_i) = n m^*\}$. Thus $\sum_{i=1}^{n/2} \bone( \sigma_i=1) = H $, where $H \sim {{\sf Hypergeometric}}(n, m^*,n/2)$. It is easy to see that $\Var[H  | m^*] \lesssim n$ and therefore, by Chebychev inequality, with high probability, $|\sum_{i =1}^{n/2} \bone(\sigma_i =1) - \sum_{i= n/2 +1}^{n} \bone(\sigma_i=1) | \lesssim \sqrt{n\log n}$. Thus with high probability, we can flip at most $O(\sqrt{n \log n})$ bits of $\us^*$ to get a configuration in $\mathcal{C}_n$. We will necessarily flip these coordinates from $R^*$ and denote the set of flipped indices by $W$.   Let the derived configuration be 
$\us^{\star}$. Then we have
\begin{align}
|\sum_{i,j} M_{ij} \sigma_i^* \sigma_j^* - \sum_{i,j} M_{i,j} \sigma_i^{\star} \sigma_j^{\star} | =2\, | \sum_{i \in W} \sum_{j \in W^c} M_{i,j} \sigma_i^* \sigma_j^*|  
\leq 20 C(\xi) |W| + 2\, \sum_{i,j \in W} |M_{i,j}|. \nonumber 
\end{align}
We have, $\sum_{i,j \in W} | M_{i,j}| \lesssim \xi \log n + \sum_{i,j \in W} |J_{ij}| / \sqrt{n}$, where $\{J_{ij}: i < j\}$ are independent standard Gaussian random variables. With high probability, $|W| \leq  C\sqrt{n \log n}$ for some constant $C>0$ arbitrarily large. We will show that $\max_{S \subset [n], |S| \leq C\sqrt{n \log n} } \sum_{i < j \in S} |J_{ij}|/{\sqrt{n}} = o(n)$ with high probability. To this end, we note that for a fixed $S$, we have, by Markov's inequality, 
\begin{align}
\P\Big[ \sum_{i < j \in S} \frac{|J_{ij}|}{\sqrt{n}} \geq \delta_n \Big] \leq  \exp(- \sqrt{n}\delta_n) \E[e^{|J|}]^{C^2 n \log n} \leq \exp(- c \sqrt{n}\delta_n) 
\end{align}
whenever $\delta_n \gg n^{1/2 + \delta}$ for any $\delta>0$. The desired claim now follows by a union bound over at most $2^n$ possible $S$.  
Finally we have,
\begin{align}
\E \Big[ \max_{\us \in \mathcal{C}_n  }  \Big\{ \frac{\xi}{n} \langle \bone, \us \rangle^2 + \sum_{i,j} \frac{J_{ij}}{\sqrt{n}} \sigma_i \sigma_j \Big\}  \Big] \geq \E \Big[ \sum_{i,j} M_{ij} \sigma_i^{\star} \sigma_j^{\star} \Big] \geq \E \Big[ \max_{\us \in \{\pm 1\}^n }  \Big\{ \frac{\xi}{n} \langle \bone, \us \rangle^2 + \sum_{i,j} \frac{J_{ij}}{\sqrt{n}} \sigma_i \sigma_j \Big\}  \Big] - o(n) \nonumber
\end{align} 
thereby completing the proof.

\section{Proof of Theorem \ref{lemma:comparison}}
\label{section:main-proof}

We prove Theorem \ref{lemma:comparison} in this section. We mainly use the Lindeberg interpolation strategy, which has been widely used to prove universality in probability. We define 
\begin{align}
H_1(\us) &= \frac{1}{\sqrt{d}}\sum_{i_1 \neq i_2 \neq \cdots \neq i_p}  A_{i_1, \cdots, i_p}  f(\sigma_{i_1}, \cdots , \sigma_{ i_p}). \nonumber\\ 
H_2(\us) &=  \sum_{i_1\neq i_2 \neq \cdots \neq i_p} \Big[\sqrt{d}\frac{\kappa_1(i_1,\cdots, i_p)}{n^{p-1}}+ \frac{J_{i_1, \cdots, i_p}}{n^{(p-1)/2}}\Big] f(\sigma_{i_1}, \cdots, \sigma_{i_p}) , \nonumber 
\end{align}
where $J_{i_1, \cdots, i_p} \sim \mathcal{N}(0, \kappa_2(i_1, \cdots, i_p))$ are independent random variables for $i_1< i_2 < \cdots < i_p$ and for any permutation $\pi$, $J_{i_1, \cdots, i_p}= J_{\pi(i_1), \cdots, \pi(i_p)}$. 
We note that $V_n = \frac{\sqrt{d}}{n} \max_{\us \in A_n}  H_1(\us)$. Our first lemma establishes that $V_n$ is concentrated tightly around its expectation. 

\begin{lemma}
\label{lemma:concentration}
We have, as $n\to \infty$, 
$V_n - \E[V_n] \pto 0$ . 
\end{lemma}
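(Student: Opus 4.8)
The plan is to prove Lemma~\ref{lemma:concentration} via a bounded-differences (Azuma--McDiarmid) argument, exploiting that $V_n$ is a Lipschitz function of the independent coordinates $\{A_{i_1,\dots,i_p} : 1 \le i_1 < \cdots < i_p \le n\}$. First I would recall that $V_n = \tfrac{1}{n}\max_{\us \in A_n} \sum_{i_1 \neq \cdots \neq i_p} A_{i_1,\dots,i_p} f(\sigma_{i_1},\dots,\sigma_{i_p})$, and that the underlying randomness is the family of independent, uniformly bounded variables $A_{i_1,\dots,i_p}$ indexed by ordered $p$-subsets. The key structural input is that changing a single coordinate $A_{i_1,\dots,i_p}$ (equivalently, a single symmetric entry of the array, which corresponds to $p!$ of the permuted labels) perturbs the objective $\frac{1}{n}\sum A \cdot f$ by at most $\frac{C}{n}$, where $C$ depends only on the uniform bound $B_U$ on the $|A|$'s, the sup-norm $\|f\|_\infty$, and the combinatorial factor $p!$; crucially this bound is uniform over all $\us \in A_n$, hence survives the maximum.

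The second step is the standard observation that taking a maximum of functions each of which has bounded differences yields a function with the same bounded-difference constant: if $g(\us, a)$ changes by at most $c$ when one coordinate of $a$ is changed, then $\max_\us g(\us, a)$ changes by at most $c$ as well. Applying this with $c = C/n$ and invoking McDiarmid's inequality over the $\binom{n}{p}$ independent coordinates gives
\begin{align}
\P\big[ |V_n - \E[V_n]| \geq t \big] \leq 2\exp\!\Big( - \frac{2 t^2}{\binom{n}{p} (C/n)^2} \Big) \leq 2 \exp\!\big( - c' n^{p-2} t^2 \big), \nonumber
\end{align}
for a constant $c'>0$ depending on $p, B_U, \|f\|_\infty$. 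For $p \geq 2$ the exponent tends to $-\infty$ for every fixed $t>0$, so $V_n - \E[V_n] \pto 0$, which is exactly the claim.

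I do not expect a genuine obstacle here; the one point requiring a little care is the bookkeeping of how a single independent coordinate $A_{i_1<\cdots<i_p}$ enters the sum $\sum_{i_1 \neq \cdots \neq i_p}$, since by symmetry of the array it appears in all $p!$ orderings and is multiplied by the corresponding (equal, by symmetry of $f$) values of $f$. This only inflates the per-coordinate Lipschitz constant by the harmless factor $p!$, so the estimate goes through verbatim. An alternative, if one prefers to avoid even this bookkeeping, is to work directly with the symmetric entries as the independent coordinates (which is how the hypothesis on $\{A_{i_1,\dots,i_p}\}$ is phrased) and apply McDiarmid over those $\binom{n}{p}$ coordinates directly; the resulting bound is identical. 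Either way the proof is short, and it is worth noting that the same argument applies verbatim to the Gaussian surrogates $T_n^\alpha$ and $S_n^\alpha$ (using Gaussian concentration for Lipschitz functions instead of McDiarmid), a fact that will be used later in Section~\ref{section:main-proof}.
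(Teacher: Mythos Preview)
Your McDiarmid computation contains an algebra slip that hides a genuine failure of the approach for $p\ge 3$. You write
\[
\P\big[ |V_n - \E[V_n]| \geq t \big] \leq 2\exp\!\Big( - \frac{2 t^2}{\binom{n}{p} (C/n)^2} \Big) \leq 2 \exp\!\big( - c' n^{p-2} t^2 \big),
\]
but $\binom{n}{p}(C/n)^2 \asymp C^2 n^{p-2}/p!$, so the exponent is actually $-c' t^2/n^{p-2}$, not $-c' n^{p-2}t^2$. For $p=2$ this is fine; for $p\ge 3$ the bound degenerates to $2\exp(-o(1))$ and proves nothing. The underlying reason is that bounded differences only uses the uniform bound $|A_{i_1,\dots,i_p}|\le B_U$, and with $\binom{n}{p}$ coordinates each contributing a squared difference of order $n^{-2}$, the sum of squared oscillations is $\asymp n^{p-2}$, which diverges.

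The paper's proof avoids this by using Efron--Stein instead, which lets one insert the \emph{variance} of each coordinate rather than its range: since $\E[A_{i_1,\dots,i_p}^2]=d\,\kappa_2(i_1,\dots,i_p)/n^{p-1}$ (this is where sparsity enters), one gets
\[
\Var\Big(\max_{\us\in A_n} H_1(\us)\Big)\;\lesssim\;\frac{\|f\|_\infty^2}{d}\sum_{i_1\neq\cdots\neq i_p}\frac{d\,\kappa_2(i_1,\dots,i_p)}{n^{p-1}}=O(n),
\]
and hence $\Var(V_n)=O(1/n)$, from which Chebyshev gives the claim. Your argument can be repaired along exactly these lines: keep the same Lipschitz observation for the maximum, but replace the worst-case bounded-difference constant by $\E[(A_I-A_I')^2]=2\E[A_I^2]=O(d/n^{p-1})$ inside Efron--Stein. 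The point is that the random hypergraph is sparse, so most coordinates are zero, and a variance-based inequality sees this while McDiarmid does not.
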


\begin{proof}
To control the variance of $V_n$, we use the Efron-Stein inequality. We note that if we replace $A_{i_1, \cdots, i_p}$ by an independent copy $A_{i_1, \cdots, i_p}'$, 
\begin{align}
\E[(A_{i_1,\cdots, i_p} - A_{i_1,\cdots, i_p}')^2] \leq 2\E[A_{i_1,\cdots, i_p}^2] = \frac{2\kappa_2(i_1, \cdots, i_p)}{n^{p-1}}. \nonumber 
\end{align}
This implies, by Efron-Stein inequality \cite{blm}  
\begin{align}
\Var[\max_{\us \in A_n} H_1 (\us) ] \leq \frac{\| f \|_{\infty}}{d} \sum_{1\leq i_1 \neq \cdots \neq i_p \leq n} \frac{\kappa_2(i_1, \cdots, i_p)}{n^{p-1}} = O(n). \nonumber
\end{align}
This immediately implies that $\Var(V_n) = O(1/n)$. 
\end{proof}

Thus it suffices to work with the expected values. We define
\begin{align}
e_{1,n} = \frac{1}{n} \E \Big[\max_{\us \in A_n} H_1(\us)\Big]  , \,\,\,\,\,  e_{2,n} = \frac{1}{n} \E \Big[\max_{\us \in A_n} H_2(\us)\Big] \nonumber
\end{align}

We introduce the following smooth approximation of the maximum values. 

\begin{align}
\Phi_1(\beta) = \frac{1}{n} \E\Big[\log \sum_{\us \in A_n} \exp( \beta H_1(\us)) \Big],  \,\,\,\,
\Phi_2(\beta) = \frac{1}{n} \E \Big[\log \sum_{\us \in A_n} \exp( \beta H_2(\us)) \Big]. \label{eq:free_energy}
\end{align}

We can derive the following bound on the difference of $\Phi_1$ and $\Phi_2$. 
\begin{lemma}
\label{lemma:interpolation}
There exists a constant $D>0$ independent of $n$ such that 
\begin{align}
\frac{1}{\beta} |\Phi_1(\beta) - \Phi_2(\beta) | \leq \frac{D\beta^2}{\sqrt{d}}. 
\end{align}
\end{lemma}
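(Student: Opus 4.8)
\noindent\textit{Strategy for Lemma~\ref{lemma:interpolation}.}
The plan is a Lindeberg-type interpolation in which the edge weights of $H_1$ are replaced by those of $H_2$ one $p$-subset at a time. Since $f$ and the arrays $\{A_{i_1,\dots,i_p}\}$, $\{J_{i_1,\dots,i_p}\}$ are symmetric, for each unordered $p$-subset $e\subset[n]$ the value $f(\sigma_{i_1},\dots,\sigma_{i_p})$ depends only on $e$ (write it $f_e(\us)$) and
\begin{align}
H_1(\us)=p!\sum_{e}U_e\,f_e(\us),\qquad H_2(\us)=p!\sum_{e}W_e\,f_e(\us),\nonumber
\end{align}
where $U_e=A_e/\sqrt d$ and $W_e=\sqrt d\,\kappa_1(e)/n^{p-1}+J_e/n^{(p-1)/2}$. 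Passing from ordered tuples to subsets is essential: the $p!$ weights in one symmetry orbit are equal, hence not independent, whereas $\{U_e\}$ (resp.\ $\{W_e\}$) are independent across distinct subsets. The stated moment relations give matched first moments $\E U_e=\E W_e=:m_e=\sqrt d\,\kappa_1(e)/n^{p-1}$, second moments $\E U_e^2=\kappa_2(e)/n^{p-1}$ and $\E W_e^2=m_e^2+\kappa_2(e)/n^{p-1}$ (so the variances differ only by $m_e^2$, a lower-order mismatch which vanishes entirely when $\kappa_1\equiv 0$, e.g.\ for \Xorsat), and the crucial third-moment bound $\E|U_e|^3\le (B_U/\sqrt d)\,\E U_e^2=B_U\kappa_2(e)/(\sqrt d\,n^{p-1})$, together with $\E|W_e-m_e|^3=n^{-3(p-1)/2}\E|J_e|^3\le C(\kappa_2(e)/n^{p-1})^{3/2}$.

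Fix an ordering $e_1,\dots,e_N$ of the $N=\binom{n}{p}$ subsets, let $H^{(k)}$ use the $U$-weights on $e_1,\dots,e_k$ and the $W$-weights on the rest, and put $\Phi^{(k)}=\frac1n\E[\log\sum_{\us\in A_n}\exp(\beta H^{(k)}(\us))]$, so that $\Phi^{(N)}=\Phi_1$, $\Phi^{(0)}=\Phi_2$, and it suffices to control $\sum_{k=1}^N|\Phi^{(k)}-\Phi^{(k-1)}|$. For the $k$-th swap, let $G_k$ be the random log-partition function obtained by fixing the $e_k$-weight to a deterministic value $x$ and leaving all other weights as in $H^{(k)}$,
\begin{align}
G_k(x)=\frac1n\log\sum_{\us\in A_n}\exp\Big(\beta H^{(k)}_{-k}(\us)+\beta\,p!\,x\,f_{e_k}(\us)\Big),\nonumber
\end{align}
so that $\Phi^{(k)}=\E[G_k(U_{e_k})]$ and $\Phi^{(k-1)}=\E[G_k(W_{e_k})]$ with $G_k$ independent of $U_{e_k}$ and $W_{e_k}$. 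Differentiating, $G_k'(x),G_k''(x),G_k'''(x)$ are $\tfrac1n$ times the first, second and third cumulants (in the tilting parameter) of $\beta p!\,f_{e_k}$ under the corresponding Gibbs measure; since $|f_{e_k}|\le\|f\|_\infty$, they are bounded in absolute value by $\tfrac{\beta p!\,\|f\|_\infty}{n}$, $\tfrac{(\beta p!)^2\|f\|_\infty^2}{n}$, $\tfrac{8(\beta p!)^3\|f\|_\infty^3}{n}$ respectively, uniformly in $x$ and in the remaining weights.

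Taylor-expanding $G_k$ to third order about $m_{e_k}$ and taking expectations: the first-order terms vanish because $\E U_{e_k}=\E W_{e_k}=m_{e_k}$ and $G_k$ is independent of these weights; the second-order terms contribute $\tfrac12\E[G_k''(m_{e_k})](\mathrm{Var}\,U_{e_k}-\mathrm{Var}\,W_{e_k})=-\tfrac12\E[G_k''(m_{e_k})]\,m_{e_k}^2$, of size at most $\tfrac{(\beta p!)^2\|f\|_\infty^2}{2n}\,m_{e_k}^2$; and the Lagrange remainders are at most $\tfrac16\|G_k'''\|_\infty(\E|U_{e_k}-m_{e_k}|^3+\E|W_{e_k}-m_{e_k}|^3)$. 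Summing over the $N\le n^p/p!$ subsets: the remainder from the $U$-weights is at most a constant times $\tfrac{\beta^3\|f\|_\infty^3}{n}\cdot\tfrac{B_U\|\kappa_2\|_\infty}{\sqrt d\,n^{p-1}}\cdot n^p=O(\beta^3/\sqrt d)$, the dominant term, which gives the claim after dividing by $\beta$; the remainder from the Gaussian $W$-weights sums to $O(\beta^3 n^{(1-p)/2})=o_n(1)$, and the second-order mismatch sums to at most a constant times $\tfrac{\beta^2 d\,\|\kappa_1\|_\infty^2}{n}=o_n(1)$. Hence $\tfrac1\beta|\Phi_1(\beta)-\Phi_2(\beta)|\le D\beta^2/\sqrt d$ as $n\to\infty$, the nuisance $o_n(1)$ terms being absorbed once $n$ is large relative to $d$, as in Lemma~\ref{lemma:Interpolation1}.

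The main technical point — essentially the only place anything delicate occurs — is the set of three derivative bounds for the one-variable slice $G_k$ of the log-partition function, i.e.\ bounding its first three derivatives by the mean, variance and third centered moment of a single bounded Gibbs observable, and then the bookkeeping that makes the $\sim n^p$ third-order terms, each of order $n^{-1}d^{-1/2}n^{-(p-1)}$, combine to exactly $O(\beta^3/\sqrt d)$. The vanishing of the first-order terms and the fact that the $O(m_e^2)$ second-moment mismatch and the Gaussian third moments contribute only $o_n(1)$ are then routine.
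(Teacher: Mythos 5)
Your proof is correct and is essentially the paper's argument: the paper applies Chatterjee's Lindeberg lemma (Lemma \ref{lemma:lindeberg}) to the function $G(\mathbf{M})$, using exactly your derivative bound $\max_{i_1,\cdots,i_p}\|\partial^3_{i_1,\cdots,i_p} G\|_\infty \lesssim \beta^2\|f\|_\infty^3/n$ and the same third-moment bookkeeping ($S_3 \lesssim n/\sqrt{d}$ from the sparse weights plus an $o(n)$ Gaussian contribution), whereas you carry out the same swap-and-Taylor-expand computation coordinate by coordinate. Your explicit accounting of the $m_e^2$ second-moment mismatch coming from $\kappa_1$ (vanishing when $\kappa_1\equiv 0$) is a small refinement over the paper, whose cited lemma formally assumes matched second moments.
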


We note that $\Phi_i(\beta)/ \beta \to e_{i,n}$ for $i =1,2$ as $\beta \to \infty$. The following lemma gives us a quantitative version of this statement, valid uniformly for all $n$. 

\begin{lemma}
\label{lemma:finitetemp_approx}
We have, for $i=1,2$, for all $n$ sufficiently large, 
\begin{align}
\Big| \frac{\Phi_i(\beta)}{\beta} - e_{i,n} \Big| \leq  \frac{ \log |\mathcal{X}|}{\beta}.  \nonumber
\end{align}
\end{lemma}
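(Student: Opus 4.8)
The statement to prove is Lemma \ref{lemma:finitetemp_approx}: a quantitative comparison between the smoothed free energy $\Phi_i(\beta)/\beta$ and the expected maximum $e_{i,n}$, with error $\log|\mathcal{X}|/\beta$ uniformly in $n$.

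\textbf{The plan.} This is the standard soft-max sandwich. Fix $i \in \{1,2\}$ and write $Z = \sum_{\us \in A_n} \exp(\beta H_i(\us))$ and $M = \max_{\us \in A_n} H_i(\us)$. The key two-sided bound is
\begin{align}
\exp(\beta M) \;\le\; Z \;\le\; |A_n| \exp(\beta M), \nonumber
\end{align}
since every summand is at most $\exp(\beta M)$ and one summand equals $\exp(\beta M)$. Taking logs, dividing by $n\beta$, and then taking expectations gives
\begin{align}
e_{i,n} \;\le\; \frac{\Phi_i(\beta)}{\beta} \;\le\; e_{i,n} + \frac{\log |A_n|}{n\beta}. \nonumber
\end{align}
Since $A_n \subset \mathcal{X}^n$, we have $|A_n| \le |\mathcal{X}|^n$, so $\log|A_n|/(n\beta) \le \log|\mathcal{X}|/\beta$, which is exactly the claimed bound.

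\textbf{Steps in order.} First I would record the elementary inequality $\max_j x_j \le \log \sum_j e^{x_j} \le \log N + \max_j x_j$ for $N$ terms (here with $x_j = \beta H_i(\us)$ ranging over $\us \in A_n$), which holds deterministically for every realization of the disorder. Second, I would divide through by $n\beta$ and take expectations, using linearity to pass $\E$ through; note the lower bound $e_{i,n} \le \Phi_i(\beta)/\beta$ and the upper bound $\Phi_i(\beta)/\beta \le e_{i,n} + (\log|A_n|)/(n\beta)$. Third, I would bound $|A_n| \le |\mathcal{X}|^n$ since $A_n \subseteq \mathcal{X}^n$, giving $(\log |A_n|)/(n\beta) \le (\log|\mathcal{X}|)/\beta$. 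Combining the two one-sided bounds yields $|\Phi_i(\beta)/\beta - e_{i,n}| \le (\log|\mathcal{X}|)/\beta$, as desired; this in fact holds for all $n \ge 1$, so the qualifier ``$n$ sufficiently large'' is not even needed.

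\textbf{Main obstacle.} There is essentially no obstacle here — the lemma is a purely deterministic elementary fact about the log-sum-exp function, requiring only monotonicity of $\log$ and the crude cardinality bound on $A_n$. The only point requiring a moment's care is making sure $A_n$ is nonempty (otherwise $\max$ is $-\infty$ and both sides are $-\infty$, so the inequality is vacuously fine under the convention stated after Theorem \ref{lemma:comparison}) and that $|A_n| \ge 1$ so the $\log$ is well-defined and nonnegative — both trivially true when the optimization problem is meaningful. I would simply remark on this and move on.
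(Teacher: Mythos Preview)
Your proof is correct and in fact cleaner than the paper's, but the route is genuinely different. You use the direct log-sum-exp sandwich $\beta M \le \log Z \le \log|A_n| + \beta M$, take expectations, and bound $|A_n|\le|\mathcal{X}|^n$. The paper instead computes
\[
\frac{\partial}{\partial\beta}\,\frac{\phi_n(\beta)}{\beta} \;=\; -\frac{1}{n\beta^2}\,S(\mu_{\beta,n}),
\]
where $S(\mu_{\beta,n})$ is the Shannon entropy of the Gibbs measure, bounds $0\le S(\mu_{\beta,n})\le \log|A_n|\le n\log|\mathcal{X}|$, and then integrates in $\beta$ from $\beta$ to $\infty$ to obtain the same estimate. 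Your argument is purely elementary and gives the bound for every $n\ge1$ with no calculus; the paper's argument is slightly more work but yields, as a byproduct, that $\phi_n(\beta)/\beta$ is nonincreasing in $\beta$ and identifies the error exactly as an averaged entropy, which is occasionally useful information elsewhere. Either approach suffices for the lemma as stated.
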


The proofs of Lemmas \ref{lemma:interpolation} and \ref{lemma:finitetemp_approx} will be deferred to the end of this section. We complete the proof of Lemma \ref{lemma:comparison} using these results. 
To this end, we note that using Lemma \ref{lemma:finitetemp_approx}, we have,
\begin{align}
|e_{1,n} - e_{2,n}| \leq \frac{D\beta^2}{\sqrt{d}} + 2\, \frac{ \log |\mathcal{X}| }{\beta}.  \nonumber 
\end{align}
Thus choosing $\beta = d^{1/4- \delta}$ for some $0< \delta < 1/4$, we have $|e_{1,n} - e_{2,n}| = o_d(\sqrt{d})$. Now, we have, from \eqref{eq:opt}
\begin{align}
V_n &= \frac{1}{n} \max_{\us \in A_n} \sum_{i_1, \cdots, i_p} A_{i_1, \cdots, i_p} f(\sigma_{i_1}, \cdots, \sigma_{i_p}) \nonumber \\
&= \E[\max_{\us \in A_n(\alpha)} [ \alpha d +T_n^{\alpha} \sqrt{d} ] ]+ o_d(\sqrt{d}), \nonumber 
\end{align}
where $A_n(\alpha) = \{ \us \in A_n : \frac{1}{n^p}\sum_{i_1 \neq \cdots \neq i_p} \kappa(i_1, \cdots, i_p) f(\sigma_{i_1},\cdots, \sigma_{i_p}) = \alpha \} $. This completes the proof of the lemma.

\subsection{Proof of Lemma \ref{lemma:interpolation}} 
We will use the following version of the Lindeberg invariance principle \cite{chatterjee2005simple}. 

\begin{lemma}
\label{lemma:lindeberg}
Let $F: \mathbb{R}^N \to \mathbb{R}$ be three times continuously differentiable. Let $\mathbf{X} = (X_1, \cdots , X_N)$ and $\mathbf{Z} = (Z_1, \cdots , Z_N)$ be two vectors of independent random variables satisfying $\E[X_i] = \E[Z_i]$ and $\E[X_i^2] = \E[Z_i^2]$ for all $1\leq i \leq N$. Then we have,
\begin{align}
| \E[F(X)] - \E[F(Z)] | \leq \frac{1}{6} S_3 \max_{ 1 \leq i \leq N} \| \partial_i^3 F \|_{\infty}, \nonumber 
\end{align}
where $S_3 = \sum_{i=1}^{N} [\E|X_i|^3 + \E|Z_i |^3]$ and $ \| \partial_i^3 F \|_{\infty}  = \sup | \frac{\partial^3}{\partial x_i^3} F(x) |$. 
\end{lemma}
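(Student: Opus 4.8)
The plan is to run the classical Lindeberg ``replace one coordinate at a time'' telescoping argument. First I would introduce the hybrid vectors $\mathbf{W}^{(i)} = (Z_1, \dots, Z_i, X_{i+1}, \dots, X_N)$ for $0 \le i \le N$, so that $\mathbf{W}^{(0)} = \mathbf{X}$, $\mathbf{W}^{(N)} = \mathbf{Z}$, and
\begin{align}
\E[F(\mathbf{Z})] - \E[F(\mathbf{X})] = \sum_{i=1}^N \big( \E[F(\mathbf{W}^{(i)})] - \E[F(\mathbf{W}^{(i-1)})] \big). \nonumber
\end{align}
The two consecutive hybrids $\mathbf{W}^{(i-1)}$ and $\mathbf{W}^{(i)}$ agree in every coordinate except the $i$-th, where the former carries $X_i$ and the latter $Z_i$. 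I would then write $\mathbf{V}^{(i)}$ for the vector obtained from either of them by zeroing out the $i$-th coordinate; crucially, $\mathbf{V}^{(i)}$ is a measurable function of $Z_1,\dots,Z_{i-1},X_{i+1},\dots,X_N$ only, hence independent of the pair $(X_i,Z_i)$. With $\mathbf{e}_i$ the $i$-th basis vector, $\mathbf{W}^{(i-1)} = \mathbf{V}^{(i)} + X_i \mathbf{e}_i$ and $\mathbf{W}^{(i)} = \mathbf{V}^{(i)} + Z_i \mathbf{e}_i$.

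Next, for fixed $\mathbf{v}$ I would apply Taylor's theorem to the one-variable $C^3$ function $t \mapsto F(\mathbf{v} + t\mathbf{e}_i)$, expanded to second order about $t=0$ with Lagrange remainder:
\begin{align}
F(\mathbf{v} + t\mathbf{e}_i) = F(\mathbf{v}) + t\, \partial_i F(\mathbf{v}) + \tfrac{t^2}{2}\, \partial_i^2 F(\mathbf{v}) + R_i(\mathbf{v},t), \qquad |R_i(\mathbf{v},t)| \le \tfrac{|t|^3}{6} \| \partial_i^3 F \|_{\infty}. \nonumber
\end{align}
Substituting $\mathbf{v} = \mathbf{V}^{(i)}$, $t = X_i$ and $t=Z_i$, taking expectations, and using independence of $(X_i, Z_i)$ from $\mathbf{V}^{(i)}$, the first-order terms contribute $\E[X_i]\,\E[\partial_i F(\mathbf{V}^{(i)})]$ and $\E[Z_i]\,\E[\partial_i F(\mathbf{V}^{(i)})]$, which cancel since $\E[X_i] = \E[Z_i]$; likewise the second-order terms cancel since $\E[X_i^2] = \E[Z_i^2]$. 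Hence
\begin{align}
\big| \E[F(\mathbf{W}^{(i)})] - \E[F(\mathbf{W}^{(i-1)})] \big| \le \E|R_i(\mathbf{V}^{(i)},Z_i)| + \E|R_i(\mathbf{V}^{(i)},X_i)| \le \tfrac{1}{6}\| \partial_i^3 F \|_{\infty} \big( \E|X_i|^3 + \E|Z_i|^3 \big). \nonumber
\end{align}
Summing over $i$ and bounding $\| \partial_i^3 F \|_{\infty} \le \max_j \| \partial_j^3 F \|_{\infty}$ yields the claimed inequality.

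The only genuine subtlety, and the step I would be most careful about, is integrability: the cancellation of the linear and quadratic terms requires $\E|\partial_i F(\mathbf{V}^{(i)})|$ and $\E|\partial_i^2 F(\mathbf{V}^{(i)})|$ to be finite so that the products factor. This is harmless: if $\max_j \|\partial_j^3 F\|_\infty = \infty$ or $S_3 = \infty$ the asserted bound is vacuous, so I may assume both finite; then $\|\partial_i^3 F\|_\infty < \infty$ forces $\partial_i^2 F$ to grow at most linearly and $\partial_i F$ at most quadratically in the relevant coordinate, while $S_3 < \infty$ gives all second and third moments of the $X_j, Z_j$ finite, so all the expectations appearing are finite and Fubini applies coordinatewise. (Alternatively one can invoke the integral form of the remainder to make the same point.) With this bookkeeping in place the argument above is complete.
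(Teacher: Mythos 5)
Your argument is correct: it is the standard Lindeberg replacement proof (telescoping over hybrid vectors, second-order Taylor expansion with Lagrange remainder, cancellation from matched first and second moments), which is precisely the argument behind this lemma; the paper itself does not prove it but quotes it from Chatterjee's smart-path/Lindeberg paper, so there is nothing different to compare against. The only implicit step worth making explicit is that, since the bound depends only on the marginal laws of $\mathbf{X}$ and $\mathbf{Z}$, you may realize the two vectors independently of each other on a common product space so that $(X_i,Z_i)$ is indeed independent of $\mathbf{V}^{(i)}$; with that (and your integrability remark) the proof is complete.
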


Given $\mathbf{M} = \{ M_{i_1, \cdots, i_p } : 1\leq i_1 \neq i_2 \neq \cdots \neq i_p \leq n \}$, define 
\begin{align}
H(\us, \mathbf{M}) &= \sum_{i_1 \neq \cdots \neq i_p} M_{i_1, \cdots, i_p} f(\sigma_{i_1} , \cdots, \sigma_{i_p})  \nonumber\\ 
G(\mathbf{M} ) &= \frac{1}{n\beta} \E\Big[ \log \sum_{\us \in A_n } \exp( \beta H(\us, \mathbf{M})) \Big] . \nonumber 
\end{align}

Therefore, setting $\mathbf{A} = \{\frac{1}{\sqrt{d}} A_{i_1, \cdots, i_p} : 1\leq i_1 < \cdots < i_p \leq n \}$ and $\mathbf{J} = \{ \sqrt{d}\frac{\kappa_1(i_1 ,\cdots i_p)}{n^{p-1}}+ \frac{J_{i_1, \cdots, i_p}}{n^{(p-1)/2}} : 1\leq i_1 < \cdots < i_p \leq n \}$, we have, by a slight abuse of notation, 
\begin{align}
\frac{\Phi_1(\beta)}{\beta} = \E[ G(\mathbf{A})], \,\,\,\,\,\, \frac{\Phi_2(\beta)}{\beta} = \E[ G( \mathbf{J}) ] \nonumber. 
\end{align}

An application of Lemma \ref{lemma:lindeberg} yields
\begin{align}
\frac{1}{\beta} | \Phi_1(\beta) - \Phi_2(\beta) | \leq \frac{1}{6} S_3 \max_{i_1, \cdots, i_p} \| \partial_{i_1, \cdots, i_p}^3 G \|_{\infty}. \nonumber 
\end{align}
Let $\< \cdot \>$ denote the expectation with respect to the Gibbs measure $\mu_{\mathbf{M}}(\us) \propto \exp(\beta H(\us, M))$ under the weight sequence $\mathbf{M}$. Direct computation yields, for $i_1 < i_2 < \cdots < i_p$, 
\begin{align}
\partial_{i_1, \cdots, i_p} G &= \frac{p!}{n}\, \< f \>,  \nonumber \\
\partial_{i_1, \cdots, i_p }^2 G &= \frac{\beta (p!)^2}{n} \, [ \< f^2 \> - \< f \>^2], \nonumber \\
\partial_{i_1, \cdots, i_p}^3 G &= \frac{\beta^2 (p!)^3}{n} \,[ \< f^3 \> - 3 \< f ^2 \> \<f \> + 2 \< f \>^3].  \nonumber 
\end{align}
Thus we have, $\max_{i_1, \cdots, i_p} \| \partial_{i_1, \cdots, i_p}^3 G \|_{\infty} \leq \frac{6\beta^2 (p!)^3}{n} \|f \|_{\infty}^3 $. 
Finally, we have,
\begin{align}
S_3 &= \sum_{i_1 <\cdots < i_p} \Big[\frac{1}{d^{3/2}} \E| A_{i_1, \cdots, i_p} |^3 + \E|\sqrt{d}\frac{\kappa_1(i_1,\cdots, i_p)}{n^{p-1}}+ \frac{J_{i_1, \cdots, i_p}}{n^{(p-1)/2}}|^3 \Big] = I + II.  \nonumber 
\end{align}
We bound each term separately. To bound the first term, we note,
\begin{align}
I \leq \frac{1}{d^{3/2}} \sum_{i_1< \cdots < i_p} \E|A_{i_1, \cdots, i_p}|^3 
\leq \frac{ B_U}{d^{3/2}} \sum_{i_1 < \cdots < i_p} d\, \frac{\kappa_2(i_1,\cdots, i_p)}{n^{p-1}}  
\lesssim \frac{n}{\sqrt{d}} . \nonumber 
\end{align}
Finally, to bound the second term, we note that, 
\begin{align}
II \lesssim \frac{n^p}{n^{3(p-1)}} + \frac{n^p}{n^{3(p-1)/2}} \lesssim \frac{n^{3/2}}{n^{p/2}} + o(1)= o_n(n) . \nonumber 
\end{align}

This completes the proof.

\subsection{Proof of Lemma \ref{lemma:finitetemp_approx}}
Let $H: \mathcal{X}^n \to \mathbb{R}$ be any function and for any subset of configurations $A_n$,  define the ``partition function" $Z_n(\beta) =\sum_{\us \in A_n} \exp(\beta H(\us))$. Further, define the Gibbs measure
$\mu_{\beta,n}(\us) = \exp(\beta H(\us))/ Z_n(\beta)$ and the log-partition function $\phi_n(\beta) = \frac{1}{n} \log Z_n(\beta)$. Now, we observe that 
\begin{align}
\frac{\partial}{\partial \beta} \frac{\phi_n(\beta)}{\beta} = -\frac{1}{n\beta^2}S(\mu_{\beta,n}), \nonumber
\end{align}
where $S(\mu_{\beta,n})= - \sum_{\us \in A_n} \mu_{\beta,n}(\us) \log \mu_{\beta,n}(\us)$ is the entropy of the distribution $\mu_{\beta,n}$. Now, we have, $S(\mu_{\beta,n}) \leq \log | A_n |$, where $| \cdot |$ denotes the cardinality of the configuration space. Finally, noting that $|A_n| \leq |\mathcal{X}|^{n}$, we immediately have, $\frac{\partial}{\partial \beta} \frac{\phi_n(\beta) }{\beta} \in [-\frac{\log |\mathcal{X}|}{\beta^2}, 0] $.  This immediately implies
\begin{align}
|e_{i,n} (\alpha)- \frac{\Phi_i (\beta)}{\beta}| &= | \int_{\beta}^{\infty} \frac{\partial}{\partial t}\Big(\frac{\Phi_i(t)}{t} \Big)  \de t | \leq \frac{\log |\mathcal{X}| }{\beta}. \nonumber 
\end{align}

This completes the proof.

\section{Proof of Theorem \ref{thm:regular}}
\label{section:proof-regular}

First, we study a modified optimization problem on \ER hypergraphs, which will be crucially related to the behavior of the original problem \eqref{eq:opt} on regular instances. To this end, consider the $p$-uniform \ER hypergraph constructed as follows. The hypergraph has vertex set $V= \{1, \cdots, n\}$, and each $p$-subset of $V$ is added independently to the set of hyperedges with probability $\frac{(p-1)!(d- C\sqrt{d}\log d )}{ n^{p-1}}$. We denote the adjacency matrix of this hypergraph as $A_{H}$. 
Given any function $f: \mathcal{X}^p \to \reals$ which is symmetric in its arguments, consider the optimization problem 
\begin{align}
V_n(A_H) =  \frac{1}{n} \max_{\us \in A_n} \Big[ &\frac{d}{n^{p-1}}\sum_{i_1,i_2, \cdots, i_p} f(\sigma_{i_1},\cdots, \sigma_{i_p}) + \sum_{i_1,\cdots, i_p} A_{H}^{\cen} (i_1,\cdots, i_p) \bar{f(}\sigma_{i_1},\cdots, \sigma_{i_p}) \Big], \label{eq:d-reg_surrogate}\\
&\bar{f}(\sigma_{i_1}, \cdots, \sigma_{i_p}) = f(\sigma_{i_1}, \cdots, \sigma_{i_p}) - \frac{p}{n^{p-1}} \sum_{l_2, \cdots, l_p} f(\sigma_{i_1}, \sigma_{l_2} ,\cdots, \sigma_{l_p}), \nonumber 
\end{align}
where we set $A_{H}^{\cen}= A_H - \E[A_H]$. 
The next lemma lemma approximates the value of this optimization problem using an appropriate Gaussian surrogate. 
\begin{lemma} 
\label{lemma:contiguity}
For $d$ sufficiently large, as $n\to \infty$, with high probability,  
\begin{align}
V_n(A_H) = \E \max_{\alpha}[ d \alpha + S_n^{\alpha} \sqrt{d}] + o_d(\sqrt{d}), 
\end{align}
with $S_n^{\alpha}$ as defined in \eqref{eq:regular_opt}. 
\end{lemma}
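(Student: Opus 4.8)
The plan is to mimic the interpolation-plus-concentration architecture used for Theorem~\ref{lemma:comparison}, but now the ``sparse'' object is the shifted \ER hypergraph with edge probability $(p-1)!(d-C\sqrt d\log d)/n^{p-1}$ and the centered, ``projected'' objective $\bar f$ appearing in \eqref{eq:d-reg_surrogate}. First I would record that $V_n(A_H)$ concentrates around its expectation: replacing one coordinate $A_H(i_1,\dots,i_p)$ by an independent copy changes $H(\us,\cdot)$ by $O(\|f\|_\infty (1 + p/n^{p-1}\cdot n))=O(\|f\|_\infty)$ uniformly in $\us$, and the number of edge-slots is $O(n^p)$ with each slot having second moment $O(1/n^{p-1})$, so Efron--Stein gives $\var(V_n(A_H))=O(1/n)$, exactly as in Lemma~\ref{lemma:concentration}. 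Note that the deterministic term $\frac{d}{n^{p-1}}\sum f(\sigma_{i_1},\dots,\sigma_{i_p})$ contributes nothing to the variance since it carries no randomness.

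Next I would set up the Gaussian comparison. Define, as in Section~\ref{section:main-proof}, the finite-temperature free energies
\begin{align}
\Phi_{1}(\beta) &= \frac{1}{n}\E\Big[\log \sum_{\us\in A_n}\exp\big(\beta H_{1}(\us)\big)\Big],\qquad
\Phi_{2}(\beta) = \frac{1}{n}\E\Big[\log \sum_{\us\in A_n}\exp\big(\beta H_{2}(\us)\big)\Big],\nonumber
\end{align}
where $H_1(\us)=\frac{1}{\sqrt d}\big[\frac{d}{n^{p-1}}\sum f + \sum A_H^{\cen}\,\bar f\big]$ (so that $V_n(A_H)=\frac{\sqrt d}{n}\max_\us H_1$) and $H_2$ replaces the centered Bernoulli weights $A_H^{\cen}(i_1,\dots,i_p)/\sqrt d$ by independent Gaussians of the matching variance while keeping the deterministic shift. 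The crucial structural point is that the linear map $\bar f$ is a \emph{linear} functional of the family $\{f(\sigma_{i_1},\dots,\sigma_{i_p})\}$, so $H(\us,\mathbf M)=\sum_{i_1\neq\cdots\neq i_p} M_{i_1,\dots,i_p}\,\bar f(\sigma_{i_1},\dots,\sigma_{i_p})$ is still multilinear in $\mathbf M$; hence the third-derivative bound on $G(\mathbf M)=\frac{1}{n\beta}\E\log\sum_\us e^{\beta H(\us,\mathbf M)}$ goes through verbatim, giving $\|\partial^3_{i_1,\dots,i_p}G\|_\infty \le C\beta^2 \|\bar f\|_\infty^3/n \le C'\beta^2\|f\|_\infty^3/n$ (using $\|\bar f\|_\infty\le (1+p)\|f\|_\infty$). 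The Lindeberg bound of Lemma~\ref{lemma:lindeberg} then yields $\frac1\beta|\Phi_1(\beta)-\Phi_2(\beta)|\le D\beta^2/\sqrt d$, exactly the analogue of Lemma~\ref{lemma:interpolation}; matching first and second moments is immediate since $\E[A_H^{\cen}]=0$ and $\E[(A_H^{\cen})^2]=\E[A_H^2]-\E[A_H]^2 = (d-C\sqrt d\log d)\frac{\kappa_2}{n^{p-1}}(1-O(1/n))$, and I would absorb the $O(\sqrt d\log d/n^{p-1})$ discrepancy between this and $\frac{d\kappa_2}{n^{p-1}}$ either by matching the Gaussian variance to $\E[(A_H^{\cen})^2]$ exactly, or by a further trivial interpolation (an $o_d(\sqrt d)$ perturbation). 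Then Lemma~\ref{lemma:finitetemp_approx} (with $|\mathcal X|$ fixed) converts free energies back to maxima with error $\log|\mathcal X|/\beta$, and choosing $\beta=d^{1/4-\delta}$ makes both error terms $o_d(\sqrt d)$.

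It then remains to identify $\frac{\sqrt d}{n}\E[\max_\us H_2(\us)]$ with $\E\max_\alpha[d\alpha + \sqrt d\,S_n^\alpha]$. Here the deterministic term contributes $\frac{d}{n^p}\sum_{i_1\neq\cdots\neq i_p} f(\sigma_{i_1},\dots,\sigma_{i_p}) + o(1)$, which on the slice $\{\us:\frac{1}{n^p}\sum_{i_1,\dots,i_p}f(\sigma_{i_1},\dots,\sigma_{i_p})=\alpha\}$ equals $d\alpha+o(1)$ (the difference between summing over all tuples and over distinct tuples is $O(1/n)$). The Gaussian part is $\sqrt d\cdot\frac1n\sum_{i_1\neq\cdots\neq i_p}\frac{\tJ_{i_1,\dots,i_p}}{n^{(p-1)/2}}\bar f(\sigma_{i_1},\dots,\sigma_{i_p})$ where $\tJ$ has the variance profile $\kappa_2=1/(p-1)!$; by symmetrizing $\tJ$ over permutations one recognizes a standard symmetric Gaussian $p$-tensor $\{J_{i_1,\dots,i_p}\}$ (up to an error $o(\sqrt d)$ in the sum from restricting to distinct indices and from the diagonal terms), and expanding $\bar f(\sigma_{i_1},\dots,\sigma_{i_p})=f(\sigma_{i_1},\dots,\sigma_{i_p})-\frac{p}{n^{p-1}}\sum_{l_2,\dots,l_p}f(\sigma_{i_1},\sigma_{l_2},\dots,\sigma_{l_p})$ and using $G_{i_1}=\sum_{m_2,\dots,m_p}J_{i_1,m_2,\dots,m_p}/n^{(p-1)/2}$ produces precisely the two-term bracket defining $S_n^\alpha$ in \eqref{eq:regular_opt}. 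Partitioning $A_n$ into the level sets of $\alpha$ and taking the max over $\alpha$ (finitely many values matter up to $o_d(\sqrt d)$, or one argues directly on the joint maximum) finishes the identification. The main obstacle I anticipate is bookkeeping the several harmless-but-genuine $O(1/n)$, $O(\sqrt d\log d/n^{p-1})$, and diagonal/repeated-index corrections and checking they are all swallowed by the $o_d(\sqrt d)$ error --- in particular verifying that the projection $\bar f$ does not inflate $\|\cdot\|_\infty$ or the Efron--Stein bound by an $n$-dependent factor, which is the one place the structure of $\bar f$ (a contraction-type operator, uniformly bounded independent of $n$) is really used.
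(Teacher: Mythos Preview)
Your proposal is correct and is exactly the route the paper intends: the paper's own proof of this lemma is simply ``The proof is similar to Theorem~\ref{lemma:comparison}, and is therefore omitted,'' and what you have written is a faithful fleshing-out of that similarity (Efron--Stein concentration, Lindeberg interpolation with matched first two moments, the $\beta = d^{1/4-\delta}$ annealing via Lemma~\ref{lemma:finitetemp_approx}, and the identification of the Gaussian side with $S_n^\alpha$ by expanding $\bar f$). The only point worth tightening is the bookkeeping you already flag: the uniform bound $\|\bar f\|_\infty \le (1+p)\|f\|_\infty$ is what keeps both the Efron--Stein increment and the third-derivative bound $n$-independent, and the $d$ versus $d - C\sqrt d\log d$ mismatch in the variance is indeed an $o_d(\sqrt d)$ perturbation handled by a trivial second interpolation.
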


\noindent
The proof is similar to Theorem \ref{lemma:comparison}, and is therefore omitted.

Next, we initiate the study of \eqref{eq:opt} on $p$- uniform, $d$-regular instances, and first observe that we can equivalently work with the hypergraph version of the configuration model \cite{bollobas_configuration}. The probability that the hypergraph is simple is lower bounded by 
$O(1)$ \cite{cooper_frieze_molloy_reed}. Thus, for our purposes, it suffices to establish the result for the configuration model.

We recall the usual construction of the random $d$ regular, $p$ uniform hypergraph under the configuration model. 
A multi-hypergraph $G = (V, E)$, where $V$ is the set of vertices, and $E$ is the set of hyperedges. In a $p$- uniform multi-hypergraph, each $e \in E$ is a subset of size $p$ from $V$, with possible repetitions. 
 We will assume throughout that $p | nd$. Under the configuration model,
 we consider $nd$ objects, labeled as 
\begin{align}
\mathscr{L} = \{ (i, j ): 1\leq i \leq n, 1 \leq j \leq d \}. \nonumber 
\end{align}
We refer to the objects $\{ (i,j ) : 1 \leq i \leq d\}$ as the ``clones" of vertex $i$. Consider also the set of half-edges
\begin{align}
\mathscr{C} = \{ (a, k ) : 1\leq a \leq \frac{nd}{p}, 1 \leq k \leq p \}. \nonumber 
\end{align}
By a $d$-regular, $p$- uniform hypergraph drawn from the configuration model, we refer to a uniform random matching $\upsilon :\mathscr{C} \to \mathscr{L}$ (formally, the matching is a random bijection between the sets $\mathscr{C}$ and $\mathscr{L}$). In this case, we set $V= \{1, 2, \cdots, n\}$ and define  $E= \{ \{ \upsilon((a,k)) : 1 \leq k \leq p \}, 1 \leq a \leq \frac{nd}{p}  \}$. We refer to the hyperedge $\{\upsilon((a,k)): 1 \leq k \leq p \}$ as the hyperedge $a$. 

The main challenge in the analysis of the uniform $d$-regular hypergraph stems from the dependence in the hyperedges. Our main idea, similar to the one introduced in \cite{dembo2015extremal}, is to relate the optimal value \eqref{eq:opt} on the regular hypergraph to \eqref{eq:d-reg_surrogate}, up to $o_d(\sqrt{d})$ corrections. Theorem \ref{thm:regular} then follows directly from Lemma \ref{lemma:contiguity}. 

\textit{Proof of Theorem \ref{thm:regular}:}
The main idea is to ``find" an \ER hypergraph, with slightly smaller average degree, ``embedded" in the uniform $d$ regular $p$-uniform hypergraph, and relate the optimization problem on the larger graph to a modified problem on the smaller embedded graph. We formalize this idea in the rest of the proof. During the proof, we will sometimes construct hypergraphs which are not $p$ uniform, in that they have an hyperedge with less than $p$ elements. We note that this slight modification does not affect our conclusions in any way. 

We will crucially use the following two stage construction of the configuration model. Throughout, we define the vertex set $V= \{1, \cdots, n\}$. Let $C>0$ be a large constant, to be chosen later. Let $X_i \sim \dPois(d- C\sqrt{d} \log d)$ i.i.d. for some $C>0$ sufficiently large and we set $Z_i = (d- X_i)_{+}$. Recall the clones $\{ (i, j ): 1\leq j \leq d \}$ used in the construction of the configuration model. For $1 \leq i \leq n$, we color the clones $\{(i, j): 1 \leq j \leq Z_i \}$ by the color \BLUE, and the rest are colored \RED. The multi-hypergraph $G_1$ is formed by a uniform random matching $\upsilon$ between $\mathscr{L}$ and $\mathscr{C}$, and is thus distributed as a configuration model. Consider the set of hyperedges $a$ in $G_1$ such that $\{ \upsilon(a,k) : 1 \leq k \leq p \}$ are \RED\, clones and denote the sub hypergraph induced by these hyperedges as $\Gr = (V, E(\Gr)) $. Similarly, the sub hypergraph induced by the hyperedges $a$ in $G_1$ such that $\{\upsilon(a,k) : 1 \leq k \leq p \}$ has at least one \BLUE\, clone will be denoted by $\Gb$. We will now delete all the $\{(i,j): 1 \leq i \leq n, 1 \leq j \leq d \}$ clones colored \BLUE, and delete the half-edges  $\{a: 1 \leq a \leq \frac{nd}{p} \}$  in $G_1$ such that $\{\upsilon((a,k)): 1 \leq k \leq p \}$ has at least one \BLUE\, clone. Assume that this operation creates $u = lp + r$, $ 0 \leq l \leq \frac{nd}{p}$, $ r < p$ unmatched \RED\, clones in $G_1$. We add new half-edges $\{ (a, k) : \frac{nd}{p} +1  \leq a \leq \frac{nd}{p}+ l, 1 \leq k \leq p \} \cup \{( \frac{nd}{p} + l+1, k) : 1 \leq k \leq r\}$ and match the new half-edges uniformly to the unmatched \RED\, clones in $G_1$. We refer to the sub-graph induced by the new hyperedges $\{ a : \frac{nd}{p} +1 \leq a \leq \frac{nd}{p} + l+1  \}$ as $\tGr = (V, E(\tGr))$ and define $G_2$ as the multi hypergraph with vertex set $V$ and hyperedges $E(\Gr) \cup E(\tGr)$. We establish in Lemma \ref{lemma:matching} that $G_2$ is equivalently obtained using a random matching between the \RED\, clones $\{(i,j) : 1 \leq i \leq n, Z_i +1  \leq j \leq d \}$ clones and $p$-uniform hyperedges with the same number of half-edges.  Let $A_{G_1}$ and $A_{G_2}$ denote the adjacency matrices of the multi-hypergraphs respectively--- thus for $j=1,2$, $(p-1)! \,A_{G_j}(i_1,\cdots,i_p)$ counts the number of $\{i_1, \cdots, i_p\}$ hyperedges present in $G_j$. 
 
%

  The cornerstone of the proof is the following lemma. We defer the proof for ease of exposition. 

\begin{lemma}
\label{lemma:regular_intermediate}
We have, with high probability as $n\to \infty$, for $d$ sufficiently large, 
\begin{align}
V_n^{R} &=  \frac{1}{n} \max_{\us \in A_n} \Big[ \frac{d}{n^{p-1}}\sum_{i_1,i_2, \cdots, i_p} f(\sigma_{i_1},\cdots, \sigma_{i_p}) + \sum_{i_1,\cdots, i_p} A_{G_2}^{\cen} (i_1,\cdots, i_p) \bar{f}(\sigma_{i_1},\cdots, \sigma_{i_p}) \Big] + o_d(\sqrt{d}), \label{eq:regular_exp_new}
\end{align}
\end{lemma}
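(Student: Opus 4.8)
\textbf{Plan of proof for Lemma \ref{lemma:regular_intermediate}.}
The strategy is to pass from the $d$-regular hypergraph to the embedded structures $G_1$ and $G_2$ constructed above, and to show that the optimization value changes only by $o_d(\sqrt{d})$ at each step. First I would record the ``contiguity'' / coupling facts about the two-stage construction: since $X_i \sim \dPois(d - C\sqrt d\log d)$ and $Z_i = (d-X_i)_+$, the number of \BLUE\ clones at each vertex is $O(C\sqrt d\log d)$ in expectation, so the total number of \BLUE\ clones, and hence the number of hyperedges of $\Gb$ and the number of newly created hyperedges of $\tGr$, is $O(nC\sqrt d\log d)$ with high probability (standard Poisson/Chernoff tail bounds plus a union bound over $i\in[n]$). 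The key structural point, which I would isolate as Lemma \ref{lemma:matching} (referenced but not yet proved), is that $G_2$ has the law of a configuration-model hypergraph on the \RED\ clones $\{(i,j): Z_i+1\le j\le d\}$ with $p$-uniform hyperedges; this follows because conditioning the uniform matching $\upsilon$ on which half-edges are matched entirely to \RED\ clones leaves the induced matching on those clones uniform, and the re-matching step is by construction uniform on the leftover \RED\ clones.

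Next I would compare $V_n^R$ (the optimum on $G_1$, the genuine $d$-regular hypergraph) with the optimum on $G_2$. Because $\us\mapsto \frac{1}{n}\sum A_{G}(i_1,\dots,i_p)f(\sigma_{i_1},\dots,\sigma_{i_p})$ is $1$-Lipschitz in the adjacency tensor in the appropriate $\ell^1$ sense and $\|f\|_\infty<\infty$, deleting or adding a hyperedge perturbs the normalized objective by $O(1/n)$; since $G_1$ and $G_2$ differ in $O(nC\sqrt d\log d)$ hyperedges w.h.p., the two optimal values differ by $O(C\sqrt d\log d)$, which is \emph{not} $o_d(\sqrt d)$ and therefore too crude. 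To get the right bound I would instead not delete but \emph{rewire}: relate $V_n^R$ to the value of $G_2$ plus a correction term that is itself a small independent perturbation, exploiting that the \RED\ degrees $Z_i$ concentrate and that the deleted/added edges are spread out. The cleaner route, which mirrors \cite{dembo2015extremal}, is to absorb the regularity constraint by a Lagrange-type argument: on the $d$-regular hypergraph every vertex has degree exactly $d$, so $\frac{d}{n^{p-1}}\sum_{i_1,\dots,i_p}f(\sigma_{i_1},\dots,\sigma_{i_p})$ is, up to lower-order terms, the ``mean-field'' part of $\sum A_{G_1}(i_1,\dots,i_p)f$, and $A_{G_1}^{\cen}$ replaced by $A_{G_2}^{\cen}$ only reshuffles the $\Gb$-portion of the degrees. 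Concretely I would write
\begin{align}
V_n^R = \frac{1}{n}\max_{\us\in A_n}\Big[\frac{d}{n^{p-1}}\sum_{i_1,\dots,i_p}f(\sigma_{i_1},\dots,\sigma_{i_p}) + \sum_{i_1,\dots,i_p}A_{G_1}^{\cen}(i_1,\dots,i_p)\bar f(\sigma_{i_1},\dots,\sigma_{i_p})\Big] + o_d(\sqrt d),\nonumber
\end{align}
using that $\sum_{i_2,\dots,i_p}A_{G_1}(i,i_2,\dots,i_p) = d$ for every $i$ makes the mean-subtraction $\bar f$ exact on $G_1$, and then show that replacing $A_{G_1}^{\cen}$ by $A_{G_2}^{\cen}$ costs only $o_d(\sqrt d)$ because the symmetric difference is the edge set of $\Gb\cup\tGr$, whose ``energy'' $\frac1n\sum_{e\in\Gb\cup\tGr}\bar f$ is bounded by $\frac{C'\|f\|_\infty}{n}|E(\Gb)\cup E(\tGr)| = O(C\sqrt d\log d/\text{(something growing)})$ — here I would use that $\bar f$ has mean zero in each coordinate so that a Hoeffding/Azuma bound over the $O(nC\sqrt d\log d)$ independent edge-placements in $\Gb$ and $\tGr$ gives fluctuations of order $\sqrt{nC\sqrt d\log d}\cdot\frac1n = o(1)$ rather than the naive $O(\sqrt d\log d)$.

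\textbf{Main obstacle.} The delicate point is precisely the last estimate: controlling the contribution of the \BLUE-incident and rewired hyperedges so that it is $o_d(\sqrt d)$ and not merely $O(\sqrt d\,\mathrm{polylog}\,d)$. The naive deterministic bound (number of bad edges times $\|f\|_\infty/n$) is off by a $\log d$ factor, so one must use the mean-zero property of $\bar f$ together with the independence/exchangeability of the placements of the bad hyperedges — essentially a concentration argument showing $\max_{\us}\frac1n|\sum_{e\in\Gb\cup\tGr}\bar f(\us)|$ is small uniformly in $\us$, which requires either a union bound over $A_n\subseteq\mathcal X^n$ combined with a sharp exponential tail (the $|\mathcal X|^n$ union bound is affordable because the number of bad edges is $o(n)$, making the per-configuration tail of order $e^{-cn}$), or a chaining/Lipschitz argument. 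I would carry this out by conditioning on $\{X_i\}$ (hence on $\{Z_i\}$ and the total number of bad edges $m_{\mathrm{bad}}$), noting $m_{\mathrm{bad}} = O(nC\sqrt d\log d/p)$ w.h.p., and then for fixed $\us$ bounding the moment generating function of $\sum_{e}\bar f(\us)$ over the uniform matching by a martingale (Azuma) argument, finally taking a union bound over $\us\in A_n$. The remaining steps — the degree concentration $Z_i = (d-X_i)_+$ with $\E X_i = d - C\sqrt d\log d$, the exactness of $\bar f$'s coordinate-wise mean subtraction on a $d$-regular graph, and the identification of the law of $G_2$ via Lemma \ref{lemma:matching} — are routine.
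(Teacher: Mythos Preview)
Your reduction $V_n^R = \frac{1}{n}\max_{\us}\big[\frac{d}{n^{p-1}}\sum f + \sum A_{G_1}^{\cen}\bar f\big] + o_d(\sqrt d)$ is correct: since $\sum_{i_2,\dots,i_p}A_{G_1}^{\cen}(i,i_2,\dots,i_p)=0$ by regularity, passing from $f$ to $\bar f$ is free on $G_1$. The remaining task is then exactly to show $\frac{1}{n}\sup_{\us}\bigl|\sum M^{\cen}\bar f\bigr|=o_d(\sqrt d)$ where $M=A_{\Gb}-A_{\tGr}$, and this is where your argument has a gap.

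Your Azuma step, after conditioning on $\Deg=(Z_1,\dots,Z_n)$ and taking a union bound over $\us$, controls only $\bigl|\sum M\bar f-\E[\sum M\bar f\mid\Deg]\bigr|$; it does \emph{not} touch the term $\E[\sum M\bar f\mid\Deg]-\E[\sum M\bar f]$. You attribute the smallness of this latter term to ``$\bar f$ has mean zero in each coordinate,'' but that is not correct and is not the mechanism. First, $\bar f$ subtracts the empirical average with only the \emph{first} index held fixed, so $\frac{1}{n^{p-1}}\sum_{i_2,\dots,i_p}\bar f(\sigma_{i_1},\dots,\sigma_{i_p})=(1-p)\frac{1}{n^{p-1}}\sum_{i_2,\dots,i_p}f\neq 0$. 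Second, and more importantly, the hyperedges of $\Gb$ are \emph{not} placed uniformly: a vertex $i$ with $Z_i$ \BLUE\ clones is proportionally more likely to appear in a $\Gb$-edge, so $\E[A_{\Gb}(i_1,\dots,i_p)\mid\Deg]$ carries a first-order linear bias in $(Z_{i_1},\dots,Z_{i_p})$. Since $\Var(Z_i)\approx d$ (not $\sqrt d\log d$; remember $Z_i\approx d-X_i$ with $X_i$ Poisson of mean $\approx d$), this conditional bias contributes a term of order $\sqrt d$, not $o_d(\sqrt d)$, to $\frac1n\sum(\E[M\mid\Deg]-\E M)\bar f$ under the naive absolute-value bound. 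Your numerics are also off: the number of bad edges is $\Theta(n\sqrt d\log d)$, not $o(n)$, and after the union bound the Azuma fluctuation is $O(d^{1/4}\sqrt{\log d})$, not $o(1)$ --- still $o_d(\sqrt d)$, but not for the reason you give.

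What actually makes the conditional-mean term vanish is a cancellation, not a mean-zero property: one computes explicitly that, to leading order, $\E[A_{\Gb}\mid\Deg]-\E[A_{\Gb}]\approx \frac{1}{n^{p-1}}\sum_{j=1}^p(Z_{i_j}-\E Z_{i_j})$ (the $j\ge 2$ \BLUE-clone contributions and the $\tGr$ contribution are $o_d(\sqrt d)$), and this degree-type weight, when summed against $\bar f$, collapses because $\sum_{i_2,\dots,i_p}$ applied to it reproduces exactly $Z_{i_1}-\E Z_{i_1}$, which is then killed by the $\bar f$ subtraction. The paper carries out the equivalent computation in the other order: it keeps $f$ (not $\bar f$), identifies the $M^{\cen}$ contribution as $\frac{p}{n^{p-1}}\sum(Z_{i_1}-\E Z_{i_1})f$, and only at the end uses the degree identity $Z_i=d-\deg_{G_2}(i)$ to fold this into the $\bar f$ correction on $A_{G_2}^{\cen}$. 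Either ordering works, but both require the explicit evaluation of $\E[A_{\Gb}\mid\Deg]$ (and of $\E[A_{\tGr}\mid\Degsec]$) --- this is the missing ingredient in your plan, and it is not ``routine.''
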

where we set $A_{G_2}^{\cen} = A_{G_2} - \E[A_{G_2}]$, and $\bar{f}$ is the same as in \eqref{eq:d-reg_surrogate}. 

%
%
%
%

Given Lemma \ref{lemma:regular_intermediate}, we complete the proof as follows. We will establish that up to $o_d(\sqrt{d})$ corrections, the value of the {\tiny\sf{RHS}} of \eqref{eq:regular_exp_new} is equal to that on an \ER hypergraph with average degree $d- C\sqrt{d}\log d$. This step is accomplished by direct graph comparison arguments. To this end, note that for any two $p$-hypergraphs $G_1 = (V, E_1), G_2 = (V, E_2)$, denoting the optimal value  \eqref{eq:opt} as $V_n(G_i)$, $i=1,2$  respectively, we have, $|V_n( G_1 ) - V_n (G_2)| \lesssim | E_1 \Delta E_2 | /n$. 

Consider first the Poisson Cloning hypergraph $\Pc{ d- C\sqrt{d}\log d}$ \cite{Kim} constructed as follows. 
Let $U_1, \cdots, U_n$ be i.i.d. $\dPois(\frac{(p-1)! (d- C\sqrt{d}\log d)}{n^{p-1}} {n-1 \choose p-1})$ random variables and consider the set of clones 
\begin{align}
\mathscr{L}_1 = \{ (i,j): 1 \leq i \leq n , 1 \leq j \leq U_i \}. \nonumber
\end{align}
Set $\mathbf{U} = \sum_{i=1}^{n} U_i := l_1 p + r_1$, for some $ l _1 \geq 0$, and $ 0 \leq r_1 < p$. Consider the set of half-edges 
\begin{align}
\mathscr{C}_1 = \{(a, k) : 1 \leq a \leq l_1, 1 \leq k \leq p\} \cup \{ (l_1 +1 , k) : 1\leq k \leq r_1 \}. \nonumber
\end{align}
Given $\mathscr{L}_1$ and $\mathscr{C}_1$, let $\upsilon_1$, let $\upsilon_1$ be a uniformly random matching $\upsilon_1 : \mathscr{C}_1 \to \mathscr{L}_1$. The multi hypergraph $\Pc{ d- C\sqrt{d}\log d}$ has vertex set $V = \{1, \cdots, n\}$ and hyperedges $E= \{ \{\upsilon_1((a,k)): 1 \leq k \leq p \} : 1 \leq a \leq l_1 \} \} \cup \{ \{ \upsilon_1((l_1 +1, k )): 1 \leq k \leq r_1 \} \}$. \cite[Theorem 1.1]{Kim} establishes that this model is contiguous to the \ER hypergraph with edge probabilities $(p-1)!(d- C\sqrt{d}\log d )/ n^{p-1}$, and thus it suffices to compare the {\tiny\sf{RHS}} of \eqref{eq:regular_exp_new} to that on the Poisson cloning model. The proof is then complete, by appealing to Lemma \ref{lemma:contiguity}. 


 To facilitate the comparison between $G_2$ and $\Pc$, we use the intermediate hypergraph  $G^{{\sf int}}$, constructed as follows.  Let $W_1, \cdots, W_n$ be i.i.d. $\dPois(d- C\sqrt{d}\log d)$, and similar to the construction of $\Pc$, consider the set of clones 
\begin{align}
\mathscr{L}_2 = \{ (i,j): 1 \leq i \leq n, 1 \leq j \leq W_i \}. \nonumber 
\end{align}
Let $\sum_i W_i = l_2 p + r_2$, for some $l_2 \geq 0$, $r_2 <p$, and consider the set of half-edges 
\begin{align}
\mathscr{C}_2 = \{ (a,k) : 1 \leq a \leq l_2, 1 \leq k \leq p\} \cup \{ (l_2 +1, k): 1 \leq k \leq r_2 \}. \nonumber 
\end{align}
Similar to the construction of $\Pc$, we let $\upsilon_2$ be a uniform random matching $\upsilon_2: \mathscr{C}_2 \to \mathscr{L}_2$. We define $G^{{\sf int}}$ to be a multi hypergraph with vertex set $V= \{1, \cdots, n\}$ and hyperedges $E =  \{ \{\upsilon_2((a,k)): 1 \leq k \leq p \} : 1 \leq a \leq l_2 \} \} \cup \{ \{ \upsilon_2((l_2 +1, k )): 1 \leq k \leq r_2 \} \}$. 

To compare $G^{{\sf int}}$ and $\Pc$,note that $U_i$ is stochastically smaller than $W_i$, and therefore, we can couple the $(U_i, W_i)$ pairs such that $U_i \leq W_i$ for all $1 \leq i \leq n$. We use a two-stage construction, similar to that outlined for the configuration model, to couple $G^{{\sf int}}$ and $\Pc$. To this end, we color the clones $\{(i , j): 1 \leq i \leq n, 1 \leq j \leq U_i \}$ with the color \RED, while the remaining clones are colored \BLUE. The multi hypergraph obtained by the matching $\upsilon_2 : \mathscr{C}_2 \to \mathscr{L}_2$ obtains the graph $G^{{\sf int}}$. Now, we delete all half-edges $a$ such that $\{ \upsilon_2((a,k)) \}$ has at least one \BLUE\, clone. Finally, we add extra half-edges to match the \RED\, clones which have been left un-matched by the deletion procedure. Using Lemma \ref{lemma:matching}, we immediately observe that the graph obtained is distributed as $\Pc{d- C\sqrt{d}\log d }$. This coupling of 
 $\Pc{d- C\sqrt{d}\log d } $ and $G^{{\sf int}}$ ensures that $\E[|E(G^{{\sf clon}}) \Delta E(G^{{\sf int}})|] = O_d(1)$. Thus $\E\sum_{i_1, \cdots, i_p} | A_{G^{{\sf clon}}}(i_1, \cdots, i_p)- A_{G^{{\sf int }}}(i_1, \cdots, i_p) | \lesssim O_d(1)$ implying that for our purposes, we can restrict ourselves to $G^{\sf int}$. 
 
Next, we use the same two stage construction to couple $G^{\sf int }$ and $G_2$. Note that the construction of $G_2$ and $G^{\sf{int}}$ are very similar, and the number of clones in each model can be coupled exactly as long as $\dPois(d- C\sqrt{d}\log d) \leq d$. We choose $C>0$ sufficiently large and note that in this case, the coupling produces hypergraphs which differ in $n o_d(\sqrt{d})$ hyperedges (here we use the normal approximation to the Poisson for $d$ large). Thus, setting $V_n(G^{{\sf clon}})$ to be the value in \eqref{eq:regular_exp_new} with $A_{G^{{\sf clon}}}$ instead of $A_{G_2}$, we see that with high probability as $n \to \infty$, $|V_n^{\R}- V_n(G^{{\sf clon }})| = o_d(\sqrt{d})$. 
Finally, we appeal to the contiguity of the \ER hypergraph and the Poisson cloning model hypergraph ensembles \cite[Theorem 1.1]{Kim} to conclude that Lemma \ref{lemma:contiguity} holds for $V_n(G^{{\sf clon}})$, and thus immediately implies the desired result.

It remains to prove Lemma \ref{lemma:regular_intermediate}. 

\textit{Proof of Lemma \ref{lemma:regular_intermediate}:}
\label{sec:regular_intermediate_proof}
We note that,
\begin{align}
A_{G_1} = A_{G_2} + M, \label{eq:decomp}
\end{align}
where $M= A_{\Gb} - A_{\tGr}$. Thus we have, using \eqref{eq:opt} and \eqref{eq:decomp},
\begin{align}
V_n^{\R} &= \frac{1}{n} \max_{\us \in A_n} \Big\{ \sum_{i_1,\cdots , i_p} A_{G_2}(i_1,\cdots , i_p) f(\sigma_{i_1} , \cdots , \sigma_{i_p}) + \sum_{i_1 , \cdots , i_p} \E[M(i_1,\cdots, i_p)]f(\sigma_{i_1}, \cdots , \sigma_{i_p}) \nonumber\\
&\hspace{100pt}+ \sum_{i_1, \cdots , i_p} M^{\cen}(i_1, \cdots, i_p) f(\sigma_{i_1} , \cdots, \sigma_{i_p}) \Big\}, 
\label{eq:regular_rep1}
\end{align}
where $M^{\cen} = M - \E[M]$.
We claim that 
\begin{align}
\frac{1}{n}\sup_{\us \in A_n} \Big| \sum_{i_1,\cdots,i_p} \Big[M^\cen(i_1,\cdots,i_p) - \frac{p}{n^{p-1}} (Z_{i_1} - \E[Z_{i_1}])  \Big]f(\sigma_{i_1},\cdots, \sigma_{i_p})  \Big| = o_d(\sqrt{d}). 
\label{eq:regular_lemma_intermediate}
\end{align}
We use \eqref{eq:decomp}, \eqref{eq:regular_rep1} and \eqref{eq:regular_lemma_intermediate}, along with  the observation that $\E[A_{G_2}] + \E[M] = \E[A_{G_1}] = d/n^{p-1} + o(1)$, to conclude
\begin{align}
V_n^{\R} = \frac{1}{n} \max_{\us \in A_n} \Big[ &\frac{d}{n^{p-1}}\sum_{i_1,i_2, \cdots, i_p} f(\sigma_{i_1},\cdots, \sigma_{i_p}) + \sum_{i_1,\cdots, i_p} A_{G_2}^{\cen} (i_1,\cdots, i_p) f(\sigma_{i_1},\cdots, \sigma_{i_p}) \nonumber\\
&+ \frac{p}{n^{p-1}} \sum_{i_1,\cdots, i_p} (Z_{i_1} - \E[Z_{i_1}]) f(\sigma_{i_1}, \cdots, \sigma_{i_p})  \Big] + o_d(\sqrt{d}). \label{eq:int_representation}
\end{align}  
To complete the proof, we note that 
\begin{align}
Z_i = d -  \sum_{ l_2, \cdots, l_p} A_{G_2}(i, l_2, \cdots, l_p) + o(1). 
\end{align}
Thus
\begin{align}
\sum_{i_1, \cdots, i_p} (Z_{i_1} - \E[Z_{i_1}]) f(\sigma_{i_1}, \cdots, \sigma_{i_p} ) & = - \sum_{i_1, \cdots, i_p} \sum_{l_2, \cdots, l_p}  A_{G_2}^{\cen}(i_1, l_2, \cdots, l_p) f(\sigma_{i_1}, \cdots, \sigma_{i_p}) + o(n) \nonumber\\
&= - \sum_{i_1, l_2, \cdots, l_p} A_{G_2}^{\cen} (i_1, l_2, \cdots, l_p) \sum_{i_2, \cdots, i_p} f(\sigma_{i_1}, \cdots, \sigma_{i_p}) + o(n)  \label{eqn:int_simplification}
\end{align}
Plugging \eqref{eqn:int_simplification} back into \eqref{eq:int_representation} completes the proof. 
%

%
It remains to prove \eqref{eq:regular_lemma_intermediate}. To this end, for each $\us \in A_n$, we have, using \eqref{eq:regular_rep1}
\begin{align}
\sum_{i_1, \cdots, i_p} M^{\cen}(i_1, \cdots, i_p) f(\sigma_{i_1}, \cdots, \sigma_{i_p}) = 
\sum_{i_1,\cdots, i_p} \Big[A_{\Gb}^\cen(i_1,\cdots, i_p) 
 -  A_{\tGr}^\cen(i_1,\cdots, i_p) \Big] f (\sigma_{i_1}, \cdots, \sigma_{i_p}). \label{eq:M_decomp}
\end{align}
We will handle each term separately and prove
\begin{align}
&\frac{1}{n}\sup_{\us \in A_n} \Big| \sum_{i_1,\cdots,i_p} \Big[A_{\Gb}^\cen(i_1,\cdots,i_p) - \frac{p}{n^{p-1}} (Z_{i_1} - \E[Z_{i_1}])  \Big]f(\sigma_{i_1},\cdots, \sigma_{i_p})  \Big| = o_d(\sqrt{d}). \label{eq:regular_claim1} \\
&\frac{1}{n}\sup_{\us \in A_n} \Big| \sum_{i_1,\cdots,i_p} A_{\tGr}^{\cen}(i_1,\cdots,i_p) f(\sigma_{i_1},\cdots, \sigma_{i_p})  \Big| = o_d(\sqrt{d}). \label{eq:regular_claim2}
\end{align}
\noindent
Equations \eqref{eq:regular_claim1} and \eqref{eq:regular_claim2} automatically imply \eqref{eq:regular_lemma_intermediate}. 

\textit{Proof of \eqref{eq:regular_claim1}:}
We note,
\begin{align}
\sum_{i_1,\cdots, i_p} A_{\Gb}^{\cen}(i_1,\cdots, i_p) f(\sigma_{i_1},\cdots, \sigma_{i_p}) 
&= \sum_{i_1,\cdots, i_p} (A_{\Gb}(i_1,\cdots, i_p) - \E[A_{\Gb}(i_1, \cdots, i_p) | \Deg]) f(\sigma_{i_1} , \cdots, \sigma_{i_p}) \nonumber\\
+  \sum_{i_1,\cdots, i_p} &(\E[A_{\Gb}(i_1,\cdots, i_p) | \Deg] - \E[A_{\Gb}(i_1, \cdots, i_p)]) f(\sigma_{i_1} , \cdots, \sigma_{i_p}), 
\label{eq:regular1}
\end{align}
where $\Deg = (Z_1, \cdots, Z_n)$ is the vector of \BLUE\, clones. Let $\mathcal{E}= \{ \sum_i Z_i \leq n\E[Z_1] + C\sqrt{n \log n} \}$ for some constant $C>0$ suitably large, such that $\P[\mathcal{E}^c]=o(1)$. 
\begin{align}
&\P \Big[\frac{1}{n} \sup_{\us \in A_n} \Big|\sum_{i_1,\cdots, i_p} (A_{\Gb}(i_1,\cdots, i_p) - \E[A_{\Gb}(i_1, \cdots, i_p) | \Deg]) f(\sigma_{i_1} , \cdots, \sigma_{i_p})  \Big| > \Delta \Big] \nonumber\\
&\leq q^n \max_{\us \in A_n} \E\Big[\mathbf{1}_{\mathcal{E}} \P\Big[\frac{1}{n}\Big|\sum_{i_1,\cdots, i_p} (A_{\Gb}(i_1,\cdots, i_p) - \E[A_{\Gb}(i_1, \cdots, i_p) | \Deg]) f(\sigma_{i_1} , \cdots, \sigma_{i_k})  \Big| > \Delta |\Deg\Big] \Big]  + o(1) \nonumber\\
&\leq 2q^n \exp\Big( - \frac{1}{2C_0^2} \,\frac{n^2 \Delta^2}{n\E[Z_1] + C\sqrt{n \log n}} \Big) + o(1), \label{eq:azuma_1}
\end{align}
where \eqref{eq:azuma_1} is derived as follows. Given $\Deg$, we enumerate the $nd$ clones such that the \BLUE\, clones have the smallest 
values. Now, we form the hypergraph sequentially, where at each step, we choose the clone of smallest value and match it to $(p-1)$ randomly chosen unmatched clones. This gives us the natural filtration $\mathscr{F}_0 \subset \mathscr{F}_1 \subset \cdots \subset \mathscr{F}_{nd/p}$, where $\mathscr{F}_0 = \sigma(\Deg)$ and $\mathscr{F}_i$ is the canonical sigma algebra formed after exposing the first $i$ hyperedges. Now, we form the Doob martingale $Z_k= \E[\sum_{i_1, \cdots, i_p } A_{\Gb}(i_1, \cdots, i_p) f(\sigma_{i_1}, \cdots, \sigma_{i_p}) | \mathscr{F}_k]$ such that $Z_0 = \E[ \sum_{i_1, \cdots, i_p } A_{\Gb}(i_1, \cdots, i_p) f(\sigma_{i_1}, \cdots, \sigma_{i_p}) | \Deg]$. We note that setting $n_0 = n \E[Z_1] + C\sqrt{n \log n}$, we have,
$Z_{n_0} = \sum_{i_1, \cdots, i_p } A_{\Gb}(i_1, \cdots, i_p) f(\sigma_{i_1}, \cdots, \sigma_{i_p})$. \eqref{eq:azuma_1} now follows using Azuma-Hoeffding inequality, provided we establish that there exists a constant $C_0$ such that $|Z_k - Z_{k-1}| \leq C_0$ a.s. This follows from straight forward adaptation of the argument in \cite[Theorem 2.19]{wormald} and establishes that 
this probability is $o(1)$ for $\Delta = o_d(\sqrt{d})$. This allows us to neglect this term in \eqref{eq:regular1}. 
To control the second term in \eqref{eq:regular1}, we note that 
\begin{align}
A_{\Gb}(i_1, \cdots, i_p) = A_{\Gb,1}(i_1, \cdots, i_p) + \sum_{j=2}^{p} A_{\Gb,j}(i_1, \cdots, i_p), \label{eq:Gb_decomp}
\end{align}
where $(p-1)! A_{\Gb,j}(i_1, \cdots, i_p)$ counts the number of hyperedges on vertices $i_1, \cdots , i_p$ with exactly $j$ \BLUE\, clones. We show that the contribution due to $\{A_{\Gb,j}: j \geq 2\}$ can be neglected for our purposes. Define $\mathcal{S}_j(i_1, \cdots, i_p) = \{ S \subset \{i_1 , \cdots , i_p\}, |S| = j \}$, where $\{i_1, \cdots, i_p\}$ should be interpreted as a multiset. Therefore, we have, for $i_1, \cdots, i_p$ distinct, 
\begin{align}
\E[A_{\Gb, j}(i_1, \cdots, i_p)  | \Deg] &= \sum_{S \in \mathcal{S}_j(i_1, \cdots, i_p)}  \frac{\prod_{l \in S} Z_l \prod_{l \in S^c} (d- Z_l) } {(nd-1)(nd-2) \cdots (nd-p+1)}, \nonumber
\end{align}
This implies
\begin{align}
\sum_{i_1,\cdots, i_p} \E[A_{\Gb, j}(i_1, \cdots, i_p)  | \Deg] 
&\leq C(p,j ) \frac{ (\sum_i Z_i )^j (\sum_i (d - Z_i))^{p-j}} {(nd)^{p-1}} + o(n),
\end{align}
where $C(p,j)$ is a universal constant dependent on $p,j$ and independent of $n$. Therefore, on the event $\mathcal{E}$,

\begin{align}
&\frac{1}{n}\sup_{\us \in A_n} \Big| \sum_{j=2}^{p} \sum_{i_1,\cdots, i_p} \Big(\E[A_{\Gb,j}(i_1,\cdots , i_p) | \Deg] - \E[A_{\Gb,j}(i_1, \cdots , i_p)] \Big) f(\sigma_{i_1} , \cdots, \sigma_{i_p})  \Big| \nonumber \\
&\leq \frac{ \| f \|_{\infty}}{n} \sum_{j=2}^{p} \sum_{i_1,\cdots, i_p} \Big[\E[A_{\Gb,j}(i_1,\cdots , i_p) | \Deg] + \E[\E[A_{\Gb,j}(i_1,\cdots , i_p) | \Deg] ]  \Big] \nonumber\\
&\leq \|f \|_{\infty} \sum_{j=2}^{p} C(p,j) \Big[\frac{(\E[Z_1])^{j}}{d^{j-1}} \Big]  + o(1)= o_d(\sqrt{d})\nonumber 
\end{align}
for $j\geq 2$. This allows us to neglect this term in \eqref{eq:regular_claim1}. Finally, we are left with the first term in \eqref{eq:Gb_decomp}. We have,
\begin{align}
\E[A_{\Gb,1}(i_1, \cdots, i_p) | \Deg] =  \sum_{j=1}^{p} \frac{Z_{i_j}\prod_{l\neq j}(d- Z_{i_l})}{(nd-1)\cdots (nd - p +1)} \label{eq:regular3}
\end{align}
Therefore, using the law of large numbers, we have, with high probability, 
\begin{align}
&\frac{1}{n} \sum_{i_1, \cdots, i_p} ( \E[ A_{\Gb,1}(i_1, \cdots, i_p) | \Deg] - \E[ A_{\Gb,1}(i_1, \cdots, i_p)] ) f(\sigma_{i_1}, \cdots, \sigma_{i_p}) \nonumber \\
& = \frac{1}{n^p} \sum_{i_1, \cdots, i_p} \sum_{j=1}^{p} (Z_{i_j} - \E[Z_{i_j}]) f (\sigma_{i_1},  \cdots, \sigma_{i_p}) + o_d(\sqrt{d}). 
\end{align}
This completes the proof of \eqref{eq:regular_claim1}.


\textit{Proof of \eqref{eq:regular_claim2}:}
 To this end, setting $\Degsec = (Y_1, \cdots, Y_n)$ as the number of  \RED\, clones of the $i^{th}$ vertex free at the second stage of matching, we have, as in \eqref{eq:regular1},
\begin{align}
\sum_{i_1, \cdots, i_p}  A_{\tGr}^{\cen}(i_1,  \cdots , i_p)  f(\sigma_{i_1}, \cdots, \sigma_{i_p}) 
&= \sum_{i_1, \cdots, i_p} ( A_{\tGr}(i_1,  \cdots , i_p) - \E[ A_{\tGr}(i_1, \cdots, i_p ) | \Degsec]) f(\sigma_{i_1}, \cdots, \sigma_{i_p}) + \nonumber\\
+ \sum_{i_1, \cdots, i_p}& ( \E[A_{\tGr}(i_1,  \cdots , i_p)|\Degsec] - \E[ A_{\tGr}(i_1, \cdots, i_p ) ]) f(\sigma_{i_1}, \cdots, \sigma_{i_p}).  \label{eq:tGr_decomp}
\end{align}
The contribution due to the first term in \eqref{eq:tGr_decomp} can be shown to be $o_d(\sqrt{d})$ similar to \eqref{eq:azuma_1} using Azuma's inequality as under the event $\mathcal{E}$, $\sum_{i} Y_i \leq (p-1) ( n \sum_i \E[Z_i] + C \sqrt{n \log n})$. Next, we will control the second term in \eqref{eq:tGr_decomp}. 
To this end, we have,
\begin{align}
\E[A_{\tGr}(i_1, \cdots, i_p) | \Degsec] = \frac{Y_{i_1}\cdots Y_{i_p}}{(\sum_i Y_i -1) \cdots (\sum_i Y_i - p +1)} =\frac{ \prod_{j=1}^{p} (\E[Y_{i_j} | \Deg] + \ve_{i_j}) }{ \prod_{j=1}^{p-1}(\sum_i Y_i - j )  }, \label{eq:regular-condexp}
\end{align}
where we express $Y_i = \E[Y_i | \Deg] + \ve_i$ for $1\leq i \leq n$. 
%
We will establish that there exists $\Delta= o_d(\sqrt{d})$ such that 
\begin{align}
\P\Big[\frac{1}{n} \sum_{i_1, \cdots , i_p} \Big| \E[A_{\tGr}(i_1, \cdots, i_p)|\Degsec] - \E[A_{\tGr}(i_1,\cdots, i_p)] \Big| > \Delta \Big] = o(1). \label{eq:regular-claim2-subclaim}
\end{align} 
Consider the event $\mathcal{E}_1= \{ (p-1)\sum_i Z_i- nC_0 p(p-1)^2 \log d / \sqrt{d} \leq \sum_i Y_i \leq (p-1) \sum_i Z_i  \} \cap \mathcal{E}$, for some constant $C_0>0$ sufficiently large. Given the counts of \BLUE\, clones $\Deg= (Z_1, \cdots, Z_n)$, the probability that a hyper-edge contains at least 
two \BLUE\, clones is less than $\P[\dBin(p-1, \sum_i Z_i / (n d - \sum_i Z_i)) >0] := p(\Deg)$. Thus the number of hyper-edges in $G_1$ having at least two \BLUE\, clones is stochastically dominated by $\dBin(\sum_i Z_i , p(\Deg))$. We note that with high probability, $\dBin(\sum_i Z_i , p(\Deg)) \leq nC_0 (p-1) \log d / \sqrt{d}$ with high probability for some constant $C_0>0$. Finally, each hyperedge contains at most $p$ \BLUE\, clones. This implies $\P[\mathcal{E}_1 ] = 1- o(1)$ as $n \to \infty$. Thus we have, setting
\begin{align}
\Xi = \frac{1}{n} \sum_{i_1, \cdots, i_p} \Big| \E[A_{\tGr}(i_1, \cdots, i_p)|\Degsec] - \E[A_{\tGr}(i_1,\cdots, i_p)] \Big| , \nonumber
\end{align}
$\P[|\Xi | > \Delta] \leq \P[ |\Xi | > \Delta, \mathcal{E}_1] + o(1)$.
Therefore, it suffices to prove that $\P[ |\Xi |  > \Delta , \mathcal{E}_1] =o(1)$ for some $\Delta = o_d(\sqrt{d})$ chosen suitably. 
 Henceforth in the proof, for conciseness of notation, we will use $[a]_{p-1} = (a-1) \cdots (a-p+1)$.  From \eqref{eq:regular-condexp}, we have, setting $\delta(d) = C_0 p (p-1)^2 \frac{\log d}{\sqrt{d}}$,
\begin{align}
 \frac{Y_{i_1}Y_{i_2}\cdots Y_{i_p}}{[(p-1)n \E[Z_1]+ o(n)]_{p-1}} \bone_{\mathcal{E}_1}    \leq  \E[A_{\tGr}(i_1, \cdots, i_p) | \Degsec] \bone_{\mathcal{E}_1} \leq 
 \frac{Y_{i_1} \cdots Y_{i_p} }{ [(p-1) n \E[Z_1] - n \delta(d)]_{p-1}} \bone_{\mathcal{E}_1}. \nonumber
\end{align}
We will establish that on the event $\mathcal{E}_1$, 
\begin{align}
\frac{1}{n}\sum_{i_1, \cdots, i_p} \Big|\frac{Y_{i_1}Y_{i_2}\cdots Y_{i_p}}{[(p-1)n \E[Z_1]+ o(n)]_{p-1}} - \frac{Y_{i_1} \cdots Y_{i_p} }{ [(p-1) n \E[Z_1] - n \delta(d)]_{p-1}}   \Big| \leq \Delta_1 \label{eq:regular_finalclaim1}
\end{align}
for some $\Delta_1 = o_d(\sqrt{d})$. Further, we show that on the event $\mathcal{E}_1$,
\begin{align}
\Xi_1 = \frac{1}{n} \sum_{i_1, \cdots, i_p } \Big| \frac{Y_{i_1}Y_{i_2}\cdots Y_{i_p}}{[(p-1)n \E[Z_1]+ o(n)]_{p-1}} - \E\Big[\frac{Y_{i_1}Y_{i_2}\cdots Y_{i_p}}{[(p-1)n \E[Z_1]+ o(n)]_{p-1}} \bone_{\mathcal{E}_1} \Big]\Big| \leq \Delta_2 \label{eq:regular_finalclaim2}
\end{align}
for some $\Delta_2 = o_d(\sqrt{d})$. We note that triangle inequality along with 
\eqref{eq:regular_finalclaim1} and \eqref{eq:regular_finalclaim2} implies that  on the event $\mathcal{E}_1$, $\Xi \leq 2\Delta_1 + \Delta_2 + 1/n \sum_{i_1, \cdots, i_p} \E[ A_{\tGr}(i_1, \cdots, i_p) \bone_{\mathcal{E}_1^c}]$, and thus  \eqref{eq:regular-claim2-subclaim} follows, provided we establish that 
\begin{align}
\frac{1}{n} \sum_{i_1, \cdots, i_p} \E[A_{\tGr}(i_1,\cdots,i_p) \bone_{\mathcal{E}_1^c}] \to 0. \label{eq:regular-finalclaim}
\end{align}
\eqref{eq:regular-finalclaim} is derived as follows.
\begin{align}
\frac{1}{n} \sum_{i_1, \cdots, i_p} \E[A_{\tGr}(i_1,\cdots,i_p) \bone_{\mathcal{E}_1^c}] = \frac{1}{n} \E \Big[ \frac{(\sum_i Y_i)^p} { [\sum_i Y_i ]_{p-1}} \bone_{\mathcal{E}_1^c} \Big] 
\leq d \,\E\Big[\Big(\frac{\sum_i Y_i}{\sum_i Y_i -p +1} \Big)^{2p-2}\Big] \P[\mathcal{E}_1^c] \lesssim \P[\mathcal{E}_1^c], \nonumber 
\end{align}
where the inequalities are respectively derived by using Cauchy-Schwarz inequality and that $\sum_i Y_i /(\sum_i Y_i -p+1) \leq p$ as $\sum_i Y_i \geq p$.

Thus it remains to bound 
\eqref{eq:regular_finalclaim1} and \eqref{eq:regular_finalclaim2}. We first establish \eqref{eq:regular_finalclaim1}. On the event $\mathcal{E}_1$, 
\begin{align}
&\frac{1}{n} \sum_{i_1,\cdots, i_p} \frac{Y_{i_1} \cdots Y_{i_p} }{[(p-1)n \E[Z_1] + o(n)]_{p-1}} \Big| 1 - \frac{[(p-1)n \E[Z_1] + o(n)]_{p-1}}{[(p-1)n \E[Z_1] - n\delta(d) ]_{p-1}} \Big| \nonumber\\
&\leq \frac{1}{n} (n(p-1) \E[Z_1] + o(n) ) (1+ o(1)) \Big| 1 - \frac{[(p-1)n \E[Z_1] + o(n)]_{p-1}}{[(p-1)n \E[Z_1] - n\delta(d) ]_{p-1}} \Big| \lesssim (p-1) \delta(d) = o_d(\sqrt{d}).\nonumber
\end{align}
Finally, we come to \eqref{eq:regular_finalclaim2}. We express $Y_i = \sum_{j=1}^{d- Z_i} \bone_j(i)$, where 
$\bone_j(i) =1 $ if and only if the $j^{th}$ \RED\, clone of vertex $i$ is included in an hyper-edge with at least one 
\BLUE\, clone. Thus we have, by symmetry, that $\E[Y_i | \Deg] = (d- Z_i) p_0(\Deg)$, where $p_0(\Deg) = \P[ \bone_1(i) = 1 | \Deg]$. We note that, 
\begin{align}
p_0(\Deg) = 1- \frac{[nd-\sum_i Z_i  ]_{p-1}}{[nd]_{p-1}} .  
\label{eq:p0_asymptotic}
\end{align}
Next, we have, 
\begin{align}
\Var(Y_i | \Deg) &= \sum_{ 1 \leq j  \leq d-Z_i} \Var(\bone_j | \Deg) + \sum_{1\leq j \neq j' \leq d-Z_i} \Cov(\bone_j (i) , \bone_j' (i) | \Deg) \nonumber\\
&= (d-Z_i) p_0(\Deg) ( 1- p_0(\Deg)) + (d- Z_i ) (d- Z_i - 1) (q_0 (\Deg) - p_0(\Deg)^2), 
\end{align}
where we set $q_0(\Deg) = \P[ \bone_1 = \bone_2 =1 | \Deg]$. We note that, on the event $\mathcal{E}_1$, using \eqref{eq:p0_asymptotic},   
\begin{align}
q_0(\Deg) &= 2p_0(\Deg) - 1  + \frac{[nd-\sum_i Z_i -1 ]_{2(p-1)}}{ [nd-1]_{2(p-1)}} + o(1) = p_0(\Deg)^2 + O(1/n).   \label{eq:regular-indep}
\end{align}
This proves that $\Var(Y_i | \Deg) = \Theta(\E[Z_1])$ and therefore, by Cauchy-Schwarz inequality, on the event $\mathcal{E}_1$, 
$\frac{1}{n} \sum_{i=1}^n \E[ |\ve_i| | \Deg] \leq u(d) = o_d(\sqrt{d})$,
where $Y_i = \E[Y_i | \Deg] + \ve_i$. To control $\Var(\sum_i |\ve_i| | \Deg)$, note that a calculation similar to \eqref{eq:regular-indep} proves that the indicators $\{ \bone_j (i) : 1\leq i \leq n , 1 \leq j \leq d-Z_i \}$ are approximately independent, which enforces that $\Var(\sum_i |\ve_i| |\Deg) = \Theta(n)$. 
Thus we have, for some $\Delta_3 > 2u(d) $, using Chebychev inequality,
\begin{align}
\P\Big[ \frac{1}{n} \sum_i |\ve_i | > \Delta_3, \mathcal{E}_1 \Big] \leq \E\Big[\bone_{\mathcal{E}} \frac{\Var(\sum_i |\ve_i| |\Deg)}{n^2 u(d)^2} \Big] = o(1). \nonumber 
\end{align}
To establish \eqref{eq:regular_finalclaim2}, we start by decomposing 
\begin{align}
Y_{i_1} \cdots Y_{i_p} &= ( (d-Z_{i_1})p_0(\Deg) + \ve_{i_1} ) \cdots ( (d-Z_{i_p}) p_0(\Deg) + \ve_{i_p}) \nonumber\\
&:=  (p_0(\Deg))^p\prod_{ 1\leq j \leq p} (d-Z_{i_j}) + T_1 = d^p p_0(\Deg)^p + T_{11} + T_1. \label{eq:regular_decomp}
\end{align}
The contribution due to the first term cancels due to the Law of Large numbers. For any term in $T_{11}$, we have, on the event $\mathcal{E}_1$, for some $j \geq 1$, the contribution is bounded by 
\begin{align}
\frac{d^{p-j} p_0(\Deg)^p (\frac{1}{n} \sum_i Z_i )^j}{ (\E[Z_1])^{p-1}} \leq \frac{ (\E[Z_1])^{j+1}} { d^j}= o_d(\sqrt{d}) . \nonumber
\end{align}

Finally, the contribution due to any term in $T_1$ is dominated as follows. Fix an integer $j \leq p-1$. Then we have an upper bound of the form
\begin{align}
\frac{(\E[Z_1])^{p-j} (\frac{1}{n} \sum_i |\ve_i | )^{j} }{ (\E[Z_1])^{p-1}} = o_d(\sqrt{d}) \nonumber 
\end{align}
whenever $j \geq 1$. This completes the proof. 

Finally, we finish the section by establishing the validity of the two-step construction used frequently in our argument. 
\begin{lemma}
\label{lemma:matching}
Consider $N= m+n$ labelled balls, $m$-\RED\, and $n$-\BLUE. Assume that $p| m,n$. The following two-step procedure obtains a uniform random partition of the $m$-\RED\, balls into groups of size $p$. 
\begin{itemize}
\item[1.] Group the $N$ balls at random into blocks of size $p$.

\item[2.] Remove the \BLUE\, balls and re-match the \RED\, balls left unmatched as a result into groups of size $p$. 
\end{itemize}
\end{lemma}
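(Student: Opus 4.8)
The plan is to treat Lemma \ref{lemma:matching} as an exchangeability statement: the two-stage procedure commutes (in law) with relabelings of the \RED\ balls, and the symmetric group acts transitively on the set of partitions into blocks of size $p$, so the output must be uniform. First I would fix notation: label the \RED\ balls $\{1,\dots,m\}$ and the \BLUE\ balls $\{m+1,\dots,N\}$, and note that $p\mid N$ since $p\mid m$ and $p\mid n$, so step~$1$ is a well-defined uniformly random partition $\Pi$ of $[N]$ into blocks of size $p$. Call a block of $\Pi$ \emph{pure} if all of its elements are \RED, let $R(\Pi)\subseteq[m]$ be the set of \RED\ balls lying in non-pure blocks (exactly the \RED\ balls left unmatched after the \BLUE\ balls are deleted), and let $Q(\Pi)$ be the partition of $[m]\setminus R(\Pi)$ formed by the pure blocks. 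The pure blocks cover $\lvert[m]\setminus R(\Pi)\rvert$ \RED\ balls in groups of size $p$, so $p\mid\lvert R(\Pi)\rvert$ and step~$2$ — a uniformly random partition $\Sigma$ of $R(\Pi)$ into blocks of size $p$, conditionally independent given $\Pi$ — is well defined; the output $\mathcal{O}:=Q(\Pi)\cup\Sigma$ is then a genuine partition of $[m]$ into blocks of size $p$.

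The main step is to show $\mathcal{O}$ is invariant in law under the action of $S_m$ on partitions of $[m]$. Fix $\pi\in S_m$ and let $\hat\pi\in S_N$ be its extension fixing every \BLUE\ ball. Since the uniform partition of $[N]$ is $S_N$-invariant, $\hat\pi\cdot\Pi\ \ed\ \Pi$; and because $\hat\pi$ sends \RED\ balls to \RED\ balls and \BLUE\ to \BLUE, a block is pure for $\Pi$ iff its $\hat\pi$-image is pure for $\hat\pi\cdot\Pi$. Hence $R(\hat\pi\cdot\Pi)=\pi(R(\Pi))$ and $Q(\hat\pi\cdot\Pi)=\pi\cdot Q(\Pi)$. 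Combining this with the equivariance of the uniform-partition operation (for a finite set $T$ with $p\mid\lvert T\rvert$, applying $\pi$ to a uniformly random partition of $T$ yields a uniformly random partition of $\pi(T)$), one checks that the pair $(\hat\pi\cdot\Pi,\ \pi\cdot\Sigma)$ has the same joint law as a fresh run of steps~$1$–$2$, so $\pi\cdot\mathcal{O}=Q(\hat\pi\cdot\Pi)\cup(\pi\cdot\Sigma)$ is itself a valid realization of the procedure, giving $\pi\cdot\mathcal{O}\ \ed\ \mathcal{O}$.

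To finish, I would invoke the elementary fact that $S_m$ acts transitively on the set of partitions of $[m]$ into blocks of size $p$ (given two such partitions, pick any bijection between corresponding blocks and take the union of these bijections as an element of $S_m$), so that any $S_m$-invariant probability measure on this finite set is the uniform one. Applied to $\mathcal{O}$, this yields exactly the claim of the lemma.

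This argument is essentially a deferred-decisions fact, so I do not expect a serious obstacle; the only points requiring care are the divisibility bookkeeping that makes step~$2$ well posed, and making the equivariance in the second paragraph precise by coupling the internal randomness of the two runs. If a fully self-contained, purely combinatorial version is preferred, one can instead compute, for any target partition $P^{*}$ of the \RED\ balls into $k=m/p$ blocks, $\P[\mathcal{O}=P^{*}]=\sum_{j=0}^{k}\binom{k}{j}\,\rho_j\,/\,M(k-j)$, where $M(\ell)=(\ell p)!/(p!^{\ell}\,\ell!)$ counts partitions of $\ell p$ items into blocks of size $p$ and $\rho_j$ is the probability that a prescribed family of $j$ disjoint \RED\ $p$-sets are precisely the pure blocks of $\Pi$ — a quantity that, by the $S_m\times S_n$-symmetry of the uniform partition $\Pi$, depends only on $j$ (and on $m,n,p$), not on $P^{*}$; the right-hand side is therefore independent of $P^{*}$, which is the assertion.
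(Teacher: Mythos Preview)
Your proposal is correct, and your primary argument via $S_m$-invariance is genuinely different from the paper's proof. The paper computes $\P[\mathscr{P}=P]$ directly: writing $S_{N,p}=N!/\big((p!)^{N/p}(N/p)!\big)$ and letting $A(s)$ be the number of partitions of the remaining $N-ps$ balls into $p$-blocks with no all-\RED\ block, it obtains
\[
\P[\mathscr{P}=P]=\sum_s \binom{m/p}{s}\,\frac{A(s)}{S_{N,p}}\cdot\frac{(p!)^{m/p-s}(m/p-s)!}{(m-ps)!}
=\frac{(p!)^{m/p}(m/p)!}{m!}\cdot\frac{1}{S_{N,p}}\sum_s A(s)\binom{m}{ps}\frac{(ps)!}{(p!)^s s!},
\]
and then identifies the last sum as $S_{N,p}$ by a double-counting argument (partition $[N]$ by first choosing which \RED\ balls go into pure blocks). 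Your symmetry argument sidesteps all of this bookkeeping: once you observe that the procedure is equivariant under permutations of the \RED\ labels and that $S_m$ acts transitively on size-$p$ partitions, uniformity is immediate. This is cleaner and more robust, at the cost of being slightly less self-contained for a reader who wants the explicit probability. Your ``fully combinatorial'' alternative at the end is essentially the paper's computation in different notation --- your $\rho_j/M(k-j)$ decomposition matches the paper's sum over $s$ term by term.
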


\textit{Proof of Lemma \ref{lemma:matching}:}
Let $S_{N,p} = \frac{N!}{(p!)^{N/p} (N/p)!}$ denote the total number of ways to partition the $N$-balls into groups of size $p$. Given a fixed partition $P$ of the \RED\, balls, it can be obtained using the two step procedure as follows. We choose $s$ groups formed at the first stage of matching, while the remaining $(m/p-s)$ groups are formed at the second stage. This implies, setting $\mathscr{P}$ to be random matching obtained by the two-step procedure,
\begin{align}
\P[\mathscr{P} = P] = \sum_{s} { m/p \choose s} \frac{A(s)}{S_{N,p}} \frac{(p!)^{m/p-s} (m/p -s)! }{(m-ps)!}= \frac{(p!)^{m/p} (m/p)!}{m!} \frac{1}{S_{N,p}} \sum_s A(s) {m \choose ps} \frac{(ps)!}{(p!)^s s!} , \nonumber
\end{align}
where $A(s)$ denotes the number of ways to group the remaining $N-ps$ balls into $p$-size groups such that no group has all \RED\, balls. Now, we note that any random partition of the $N$ balls can be obtained by first choosing $ps$ \RED\, balls to be grouped among themselves and subsequently grouping the remaining balls such that no block has all \RED\, balls. This implies 
\begin{align}
\frac{1}{S_{N,p}} \sum_s A(s) {m \choose ps} \frac{(ps)!}{(p!)^s s!} = 1, \nonumber
\end{align}
which immediately completes the proof.


\section{Proof of Proposition \ref{prop:equivalence}}
\label{section:proof-prop}
For computing the Gaussian surrogate in the \ER case, one can sum over all unrestricted tuples $\{(i_1, \cdots, i_p): 1 \leq i_1 , \cdots, i_p \leq n\}$ in \eqref{eq:gaussian_opt}, with $\{J_{i_1, \cdots, i_p} \}$ being the standard symmetric $p$-tensor, at the cost of incurring a $o(1)$ error. For notational convenience, we assume in this Section that we indeed work with this slightly modified value. Recall that the \ER case corresponds to the specific choice of the kernels $\kappa_1 = 1$ and $\kappa_2 = \frac{1}{(p-1)!}$. Moreover, we recall that a sequence of random variables $\{X_n\}= o_d(1)$ if there exists a deterministic function $f(d) = o(1)$ as $d \to \infty$ such that $\P[ |X_n| \leq f(d) ] \to 1$ as $n \to \infty$.

 Note that \eqref{eq:gaussian_opt} and \eqref{eq:regular_opt} imply that with high probability as $n \to \infty$, 
\begin{align}
|V_n - V_n^{\R}| \leq p \sqrt{d}  \E\Big[ \max_{\us \in A_n} \Big| \frac{1}{n^p} \sum_{i_1, \cdots, i_p} G_{i_1} f(\sigma_{i_1}, \cdots, \sigma_{i_p}) \Big| \Big] + o_d(\sqrt{d}). \label{eq:int10}
\end{align}

The proof for sufficiency of (C1) is comparatively straightforward.
Indeed, we have, for each $\us \in A_n$,
\begin{align}
\frac{1}{n^{p-1}} \sum_{i_2 , \cdots, i_p} f(\sigma_{i_1}, \cdots , \sigma_{i_p}) =\sum_{k_2,\cdots, k_p} f(\sigma_{i_1}, k_2, \cdots, k_p) m_{k_2}(\us) \cdots m_{k_p}(\us)  = \eta + r_{\sigma_{i_1}}(\us).  \nonumber
\end{align}
Plugging this expression into \eqref{eq:int10}, we have, 
\begin{align}
\frac{1}{n^p} \sum_{i_1, \cdots, i_p} G_{i_1} f(\sigma_{i_1}, \cdots, \sigma_{i_p})= \frac{\eta}{n} \sum_{i_1} G_{i_1} + \frac{1}{n} \sum_{i_1} G_{i_1} r_{i_1}(\us).   \nonumber
\end{align}

Therefore, we have, under (C1) 
\begin{align}
 \E\Big[ \max_{\us \in A_n} \Big| \frac{1}{n^p} \sum_{i_1, \cdots, i_p} G_{i_1, \cdots, i_p} f(\sigma_{i_1}, \cdots, \sigma_{i_p}) \Big| \Big] \leq |\eta| \E\Big[ \frac{1}{n} \Big|\sum_{i_1} G_i \Big| \Big] + o_d(1) \E\Big[ \frac{1}{n} \sum_{i_1} |G_{i_1}| \Big]
\end{align}

It remains to bound each of these terms.
Note that marginally,  each $G_i \sim \dN(0, \sigma^2)$ for some $\sigma^2 =O(1)$  and $\cov(G_i , G_j) = O(1/n)$. It follows that $\Var(\sum_{i_1} G_{i_1}) = O(n)$ and thus $\E [ |  \sum_{i_1}G_{i_1}/n | ] \to 0$. This controls the first term.To control the second term, observe that  $\E[\sum_i |G_i | ] = n \sigma \sqrt{2/\pi}$ and thus the term is $o_d(1)$.  
%
%
%
%
%
%

It remains to check the validity of the thesis under the condition (C2). Without loss of generality, we can and will assume that $\mathcal{X} = \{1 ,\cdots , q \}$ for some $q \geq 1$. 
Applying Theorem \ref{lemma:comparison}, we have,
\begin{align}
V_n = \E[\max_\alpha ( d \alpha +  T_n^{\alpha} \sqrt{d} )] + o_d(\sqrt{d}), \label{eq:max_cut_opt_rep}
\end{align}
where $\alpha, T_n^{\alpha}$ are defined as 
\begin{align}
T_n^{\alpha} = \frac{1}{n}\max_{\us \in [q]^n} \sum_{i_1, i_2, \cdots, i_p} \frac{J_{i_1, \cdots, i_p}}{n^{(p-1)/2}} f(\sigma_{i_1}, \cdots, \sigma_{i_p}), \,\,\,\,
\textrm{subject to } \frac{1}{n^p} \sum_{i_1 , \cdots , i_p} f(\sigma_{i_1}, \cdots, \sigma_{i_p})= \alpha, \label{eq:max_cut_opt1}
\end{align}
and $\mathbf{J} = (J_{i_1 i_2 \cdots i_p})$ is the standard symmetric Gaussian $p$-tensor. Throughout this proof, we set 
\begin{align}
H_f(\us)= \sum_{i_1 , i_2 , \cdots , i_p} \frac{J_{i_1, \cdots, i_p}}{n^{(p-1)/2}} f(\sigma_{i_1}, \cdots, \sigma_{i_p}).  \nonumber
\end{align}
We observe that the collection of random variables $\{ H_f(\us) : \us \in S_n \}$ forms a centered Gaussian process on $q^n$ elements.  The following lemma bounds the expectation and variance of the supremum of this Gaussian process. 
\begin{lem}
\label{lemma:exp_var_sup_gaussian}
There exists universal constants $C_1, C_2 >0$ such that 
\begin{align}
\E\Big[\max_{\us \in [q]^n} H_f(\us) \Big] \leq C_1 n , \,\,\,\,\, \Var\Big( \max_{\us \in [q]^n} H_f(\us) \Big) \leq C_2 n. \nonumber 
\end{align}
\end{lem}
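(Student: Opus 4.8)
The plan is to bound the expectation and the variance of $\max_{\us \in [q]^n} H_f(\us)$ separately, using the Gaussian structure of the process $\{H_f(\us) : \us \in [q]^n\}$. For the expectation, first observe that for any fixed $\us$, $H_f(\us)$ is a centered Gaussian with variance $\var(H_f(\us)) = \frac{1}{n^{p-1}} \var\big(\sum_{i_1, \cdots, i_p} J_{i_1, \cdots, i_p} f(\sigma_{i_1}, \cdots, \sigma_{i_p})\big)$; expanding and using that the entries of the standard symmetric $p$-tensor have covariance $O(1)$ on matching index sets and that $\|f\|_\infty < \infty$, one checks $\var(H_f(\us)) \leq C n$ uniformly in $\us$. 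A crude union bound then gives $\E[\max_{\us} H_f(\us)] \leq \max_{\us}\sqrt{\var(H_f(\us))}\cdot\sqrt{2\log |[q]^n|} \leq \sqrt{Cn}\cdot\sqrt{2n\log q} = C_1 n$, which is the first claimed bound.

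For the variance bound, I would invoke Gaussian concentration of measure. The map $(J_{i_1,\cdots,i_p})\mapsto \max_{\us} H_f(\us)$ is a function of the independent standard Gaussians $\{V_{i_1,\cdots,i_p}\}$ (or directly of the symmetric tensor entries), and it is Lipschitz: if we perturb the underlying Gaussian vector, the change in $\max_{\us} H_f(\us)$ is controlled by $\sup_{\us}\|\nabla_{\us}\|_2$ where $\nabla_{\us}$ is the gradient of $\us\mapsto H_f(\us)$ viewed as a linear functional of the Gaussian entries. Concretely, writing $H_f(\us) = \langle \b{c}(\us), \b{G}\rangle$ for an appropriate coefficient vector $\b{c}(\us)$ built from $f(\sigma_{i_1},\cdots,\sigma_{i_p})/n^{(p-1)/2}$, one has $\|\b{c}(\us)\|_2^2 = \var(H_f(\us)) \leq Cn$, so the Lipschitz constant of the maximum (with respect to the standard Gaussian metric) is at most $\sqrt{Cn}$. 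The Gaussian Poincar\'e inequality (or the Tsirelson--Ibragimov--Sudakov concentration inequality) then yields $\var(\max_{\us} H_f(\us)) \leq Cn =: C_2 n$.

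The main technical point to get right is the variance computation $\var(H_f(\us)) = O(n)$ for the standard symmetric $p$-tensor. Because $J_{i_1,\cdots,i_p} = \frac{\sqrt p}{p!}\sum_\pi V_{\pi(i_1),\cdots,\pi(i_p)}$, the covariance $\E[J_{i_1,\cdots,i_p} J_{j_1,\cdots,j_p}]$ is nonzero only when $\{i_1,\cdots,i_p\}$ and $\{j_1,\cdots,j_p\}$ coincide as multisets, in which case it is an $O(1)$ combinatorial constant. Hence
\begin{align}
\var(H_f(\us)) = \frac{1}{n^{p-1}} \sum_{\substack{i_1,\cdots,i_p \\ j_1,\cdots,j_p}} \E[J_{i_1,\cdots,i_p} J_{j_1,\cdots,j_p}] f(\sigma_{i_1},\cdots,\sigma_{i_p}) f(\sigma_{j_1},\cdots,\sigma_{j_p}) \leq \frac{C \|f\|_\infty^2}{n^{p-1}} \cdot n^p = C' n, \nonumber
\end{align}
since the number of pairs of $p$-tuples that agree as multisets is $O(n^p)$. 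This bound is uniform in $\us$, which is exactly what both parts need.

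I do not expect any serious obstacle here; the lemma is a soft consequence of standard Gaussian tools once the $O(n)$ variance bound for a single configuration is established. The only mild subtlety is making the Lipschitz/gradient bound for the maximum precise — this is handled by noting that for Gaussian processes $\var(\sup_t X_t) \leq \sup_t \var(X_t)$ whenever the process is a linear image of i.i.d.\ standard Gaussians, which is the version of the Poincar\'e inequality most directly applicable, so the same $O(n)$ bound controls both quantities simultaneously.
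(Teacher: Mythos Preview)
Your proposal is correct and follows essentially the same approach as the paper: bound $\Var(H_f(\us)) \lesssim n$ uniformly in $\us$, then use the standard bound $\E[\max_s Z_s] \le \sqrt{2\tau^2\log|S|}$ for the expectation and the inequality $\Var(\max_s Z_s) \le \max_s \Var(Z_s)$ for the variance. The only difference is that the paper simply cites the variance inequality \cite{houdre1995multivariate}, whereas you rederive it via the Gaussian Poincar\'e/Lipschitz argument; your explicit verification of $\Var(H_f(\us)) = O(n)$ is also a bit more detailed than the paper's one-line claim.
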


\textit{Proof of Lemma \ref{lemma:exp_var_sup_gaussian}:} Consider any centered Gaussian process $\{Z_s : s \in S\}$ with $|S| < \infty$ and $\Var(Z_s) \leq \tau^2$ for all $s \in S$. Then it is well known that $\E[\max_{s \in S} Z_s] \leq \sqrt{2 \tau^2 \log |S|}$. Further, it is known that $\Var(\max_{s} Z_s ) \leq \max_s \Var(Z_s) \leq \tau^2$ \cite{houdre1995multivariate}. In this case, we have a centered Gaussian process $\{H_f(\us): \us \in [q]^n \}$ such that $|S| = q^n$ and $\Var(H_f(\sigma)) \lesssim n $. This completes the proof.

Given Lemma \ref{lemma:exp_var_sup_gaussian}, we return to proof of Proposition \ref{prop:equivalence}. 
The constraint in \eqref{eq:max_cut_opt1} may be expressed in terms of the empirical distribution of spins of each type. Recall the function $\mathbf{m}$ defined in the Introduction. For each $\mathbf{p}= (p_1,\cdots, p_q)$ with $\mathbf{p} \in {\sf Sim}:= \{ \mathbf{a} : a_j \geq 0, \sum_j a_j  = 1 \}$, we set, 
\begin{align}
\Sigma(\mathbf{p}) = \{\us \in [q]^n: \mathbf{m}(\us) = \mathbf{p}   \}. \nonumber  
\end{align}
{Recall the definition of $\Psi$ from \eqref{eq:defnpsi} and note that the deterministic constraint may be expressed as $\Psi(\mathbf{m}(\us)) = \alpha$. We define 
\begin{align}
T_1(n,d) = \max_{{\bm} \in {\sf Sim}} \Big( d \Psi(\mathbf{m}) + \frac{\sqrt{d}}{n} \max_{\us \in \Sigma(\bm)} H_f(\us) \Big). \nonumber
\end{align}

\noindent
Thus the constrained optimization problem \eqref{eq:max_cut_opt_rep} may be re-expressed as 
\begin{align}
V_n 
= \E[T_1(n,d)] + o_d(\sqrt{d}).  \nonumber
\end{align}
Now, we assume that $\Psi(\mathbf{m} )$ is maximized at $\bm^*$. We define 
\begin{align}
T_2(n,d) &= d  \Psi(\bm^*) +   \frac{\sqrt{d}}{n} \max_{\us \in \Sigma( \bm^*)} H_f(\us).  \nonumber
\end{align}
 Trivially, we have $T_1(n,d) \geq T_2(n,d)$.
Thus we have the lower bound
\begin{align}
\liminf\,\, \E[T_1(n,d)] \geq d \Psi(\bm^*) +  \sqrt{d} \liminf\, \E\Big[ \frac{1}{n} \max_{\us \in \Sigma(\bm^*)} H_f(\us) \Big].  \label{eq:max_cut_lower}
\end{align}
To derive a matching upper bound we proceed as follows. 
 }
 Note that 
\begin{align}
\E[T_1(n,d) - T_2(n,d)] = \E[ (T_1(n,d) - T_2(n,d) ) \bone_\mathcal{F} ] + \E[ (T_1(n,d) - T_2(n,d)) \bone_{\mathcal{F}^c} ]\nonumber
\end{align}
for any event $\mathcal{F}$.  { Using Cauchy-Schwarz inequality, 
\begin{align}
| \E[ (T_1(n,d) - T_2(n,d)) \bone_{\mathcal{F}^c} ]| \leq  \sqrt{ \E[ (T_1(n,d) - T_2(n,d))^2] \P[\mathcal{F}^c]}. \nonumber
\end{align}
The following lemma derives an upper bound on $\E[(T_1(n,d) - T_2(n,d))^2]$. 
\begin{lem}
\label{lem:second_moment_control}
There exists a universal constant $C_1$, depending on $d$, independent of $n$, such that 
\begin{align}
\E[(T_1(n,d) - T_2(n,d))^2] \leq C_1. \nonumber 
\end{align}
\end{lem}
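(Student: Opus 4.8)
The plan is to observe that the deterministic contributions to $T_1(n,d)$ and $T_2(n,d)$ cancel, since $\bm^*$ is by construction the global maximizer of $\Psi$ on $\simp$. Indeed, for every $\bm \in \simp$ we have $\Psi(\bm) \le \Psi(\bm^*)$, while $\Sigma(\bm) \subseteq [q]^n$ gives $\max_{\us \in \Sigma(\bm)} H_f(\us) \le \max_{\us \in [q]^n} H_f(\us)$; maximizing over $\bm$ and also testing $\bm = \bm^*$ in the definition of $T_1$ yields
\[
T_2(n,d) \;\le\; T_1(n,d) \;\le\; d\,\Psi(\bm^*) + \frac{\sqrt d}{n}\,\max_{\us \in [q]^n} H_f(\us).
\]
Writing $M = \max_{\us \in [q]^n} H_f(\us)$ and $M^* = \max_{\us \in \Sigma(\bm^*)} H_f(\us)$, this reads
\[
0 \;\le\; T_1(n,d) - T_2(n,d) \;\le\; \frac{\sqrt d}{n}\bigl(M - M^*\bigr),
\]
and since $\Sigma(\bm^*) \subseteq [q]^n$ forces $M \ge M^*$, the right-hand side is non-negative, so we may square the inequality.

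Squaring and using the elementary bound $(M - M^*)^2 \le 2M^2 + 2(M^*)^2$, I would get $(T_1 - T_2)^2 \le \tfrac{2d}{n^2}\bigl(M^2 + (M^*)^2\bigr)$ and hence $\E[(T_1 - T_2)^2] \le \tfrac{2d}{n^2}\bigl(\E[M^2] + \E[(M^*)^2]\bigr)$. It then remains to note that the proof of Lemma \ref{lemma:exp_var_sup_gaussian} only used that the index set is finite of size at most $q^n$ and that $\var(H_f(\us)) = O(n)$ uniformly, so it applies verbatim both to $M$ (index set $[q]^n$) and to $M^*$ (index set the subset $\Sigma(\bm^*)$), giving $\E[M] \le C_1 n$, $\E[M^*] \le C_1 n$, $\var(M) \le C_2 n$ and $\var(M^*) \le C_2 n$. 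Therefore $\E[M^2] = \var(M) + (\E M)^2 \le C_2 n + C_1^2 n^2$ and likewise for $M^*$, so that $\E[(T_1 - T_2)^2] \le \tfrac{4d}{n^2}\bigl(C_2 n + C_1^2 n^2\bigr) \le 8 C_1^2\, d$ for all $n$ large enough, a constant depending only on $d$.

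I do not expect a genuine obstacle in this step: once the telescoping observation is in place the argument is a crude second-moment estimate. The only point needing a word of justification is that Lemma \ref{lemma:exp_var_sup_gaussian} is being invoked for the sub-collection $\{H_f(\us) : \us \in \Sigma(\bm^*)\}$ rather than for the full process, which is legitimate because that lemma is a purely abstract Gaussian concentration bound insensitive to the particular finite index set. The real work lies downstream, in combining Lemma \ref{lem:second_moment_control} with a deviation inequality for $\max_{\us} H_f(\us)$ to make the $\bone_{\mathcal F^c}$ term negligible and thus complete the upper bound on $\E[T_1(n,d)]$, which is outside the scope of the present statement.
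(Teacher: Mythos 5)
Your argument is correct, but it takes a different (and in fact sharper) route than the paper. The paper does not exploit any cancellation: it simply writes $\E[(T_1-T_2)^2]\le 2\,\E[T_1^2]+2\,\E[T_2^2]$, bounds $|\Psi|\le \|f\|_\infty$ so that $|T_2|\le d\|f\|_\infty+\frac{\sqrt d}{n}|\max_{\us\in\Sigma(\bm^*)}H_f(\us)|$ and $|T_1|\le d\|f\|_\infty+\frac{\sqrt d}{n}|\max_{\us\in[q]^n}H_f(\us)|+\frac{\sqrt d}{n}|\min_{\us\in[q]^n}H_f(\us)|$, uses the symmetry $\min_{\us}H_f(\us)\stackrel{d}{=}-\max_{\us}H_f(\us)$, and then invokes Lemma \ref{lemma:gaussian_process_bound} (i.e.\ $\E[(\max_{\us}H_f(\us))^2]\le Cn^2$, itself a consequence of Lemma \ref{lemma:exp_var_sup_gaussian}) to conclude a bound of order $d^2$. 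You instead sandwich $T_2(n,d)\le T_1(n,d)\le d\Psi(\bm^*)+\frac{\sqrt d}{n}\max_{\us\in[q]^n}H_f(\us)$, using that under (C2) $\bm^*$ globally maximizes $\Psi$, so the deterministic $O(d)$ terms cancel and only the Gaussian maxima $M$, $M^*$ enter; applying Lemma \ref{lemma:exp_var_sup_gaussian} to both index sets (legitimately, since that lemma only uses $|S|\le q^n$ and $\Var(H_f(\us))=O(n)$) gives a bound of order $d$ rather than $d^2$, together with the extra information $T_1\ge T_2\ge T_1-\frac{\sqrt d}{n}(M-M^*)$. Since the statement only asks for a constant depending on $d$ and independent of $n$, both proofs suffice; yours buys a better $d$-dependence and a direct control of the difference (not needed downstream), while the paper's is cruder but requires no appeal to the maximizing property of $\bm^*$. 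The only point to keep implicit, exactly as the paper does, is that $\Sigma(\bm^*)$ is nonempty (i.e.\ $\bm^*$ is realizable as an empirical spin distribution, possibly after rounding), which is needed for $T_1\ge T_2$ in both treatments.
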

\noindent 
We defer the proof to the end of the section for ease of exposition. }
%
%
%
%
As a result, for any event $\mathcal{F}$ such that $\P[\mathcal{F}^c] \to 0$ as $n\to \infty$, we have
\begin{align}
\E[T_1(n,d) - T_2(n,d)] =  \E[ (T_1(n,d) - T_2(n,d) ) \bone_\mathcal{F} ]  + o(1).  \label{eq:truncation-goodevent}
\end{align} 

Consider the event $\mathcal{F} = \Big \{ \sum_i p_i^* (1- p_i^*) \geq \Psi(\bm^*) - \frac{A}{\sqrt{d}} \Big\}$, where $A>0$ is sufficiently large, to be specified later. We will establish that for $A>0$ sufficiently large, $\P[\mathcal{F}^c] = o(1)$ as $n \to \infty$. To this end, we observe,
\begin{align}
\mathcal{F}^c \cap \Big\{ \max_{\us \in \Sigma(\bm^*)} H_f(\us) \geq - C\sqrt{n \log n} \Big\} \subseteq \Big\{ \max_{\us} H_f(\us) \geq \frac{An}{2} - C\sqrt{n \log n} \Big\}. \nonumber  
\end{align}

Then we have,
\begin{align}
\P[ \mathcal{F}^c ] &\leq \P\Big[   \max_{\us} H_f(\us) \geq \frac{An}{2} - C\sqrt{n \log n}\Big]  
+ \P\Big[\max_{\us \in \Sigma(\bm^*)}H_f(\us)< - C\sqrt{n \log n}   \Big].  \label{eq:maxqcut_bounds}
\end{align}
We show that each term in the {\tiny{\sf RHS }} of \eqref{eq:maxqcut_bounds} is $o(1)$. 
To bound the first term, we note that Lemma \ref{lemma:exp_var_sup_gaussian} implies 
\begin{align}
\E \Big[\max_{\us} H_f(\us)  \Big] \leq  C_1 n,  \,\,\,\,\,
\Var \Big[\max_{\us} H_f(\us) \Big] \leq  C_2 n.  \label{eq:gaussianprocess-bounds}
 \end{align}
 Thus we have, by applying a traditional concentration bound on the suprema of a Gaussian process \cite{blm}, 
 \begin{align}
 \P\Big[   \max_{\us} H_f(\us) \geq \frac{An}{2} - C\sqrt{n \log n}\Big] \leq \exp \Big( - C_0 \frac{(\frac{An}{2} - C\sqrt{n \log n} - \E[ \max_{\us} H_f(\us)])^2}{n} \Big) \nonumber 
 \end{align}
 for some universal constant $C_0>0$. Thus using \eqref{eq:gaussianprocess-bounds}, we note that if $A$ is chosen sufficiently large, the probability decays to zero as $n \to \infty$. 
 
We bound the second term using Chebychev inequality. To this end, we note from \eqref{eq:gaussianprocess-bounds} that the variance of the maxima is $O(n)$ while the expectation of the supremum is non-negative and therefore
\begin{align}
\P\Big[\max_{\us \in \Sigma(\bm^*)}H_f(\us)< - C\sqrt{n \log n}   \Big] \leq \frac{\Var( \max_{\us} H_f(\us) ) }{ C^2 n \log n }= o(1). \nonumber
\end{align}
This ensures that for $A$ chosen sufficiently large, $\P[\mathcal{F}^c] = o(1)$. 
 
 Using \eqref{eq:truncation-goodevent}, we see that it suffices to bound $\E[ (T_1(n,d) - T_2(n,d)) \bone_{\mathcal{F}}]$. 
We set $\us^* = \argmax\, T_1(n,d)$ and the corresponding empirical distribution of spins $\mathbf{p}^*= (p_1^*, \cdots, p_q^*)$.
Further, we fix a function $\psi : [q]^n \to \Sigma(\bm^*)$ which maps every $\us \in [q]^n$ to the configuration set $\Sigma(\bm^*)$ by changing the minimum number of coordinates for each configuration. 
 On the event $\mathcal{F}$, we have, setting $p_i^* = m_i^*+ \ve_i$, $\sum_i \ve_i =0$ and using Taylor's theorem and  $(C2)$, we have, for $d \geq d(f)$, and some $\xi^*$ on the segment joining $\mathbf{p}^*, \bm^*$, 
 \begin{align}
\frac{A}{\sqrt{d}} \geq |\bar{\Psi}(\mathbf{p}^*) - \bar{\Psi}(\bm^*)| = | (\mathbf{p}^* - \bm^*)^{{\sf T}} \grad^2 \bar{\Psi}(\xi^*) (\mathbf{p}^* - \bm^*)|   \implies \| \mathbf{p}^* - \bm^* \|_2^2 \leq \frac{A}{c\sqrt{d}}.
\nonumber  
 \end{align}
 Using Cauchy-Schwarz inequality, we have, $\| \mathbf{p}^* - \bm^* \|_1 \leq \sqrt{A q}/ (\sqrt{c} d^{1/4})$. The $\ell_1$ distance between two discrete probability vectors is equivalent to the TV distance, and thus it is easy to see that there exists a constant $C_0>0$ such that we can re-label at most $n C_0/d^{1/4}$ spins in $\us^*$ to get the configuration $\us^{**} \in \Sigma(\bm^*)$ chosen earlier. On the event $\mathcal{F}$, using the definition of $T_2(n,d)$, we have, 
 \begin{align}
 T_2(n,d) \geq d\Psi(\bm^*) + \frac{\sqrt{d}}{n} H_f(\us^{**}) 
 \geq T_1(n,d) - e, 
 \end{align}
where we set 
$e= \frac{\sqrt{d}}{n} \Big[ H_f(\us^*) - H_f(\us^{**}) \Big]$. 
On the event $\mathcal{F}$, we have, 
\begin{align}
e \leq \frac{\sqrt{d}}{2n} \max_{\{ \us : \| \mathbf{p}(\us) - \Sigma(\bm^*) \|_1 \leq \frac{C_0}{d^{1/4}}\}} \Big[ \sum_{i_1, \cdots , i_p} \frac{J_{i_1 i_2 \cdots i_p}}{n^{(p-1)/2}} \Big( f(\sigma_{i_1}, \cdots, \sigma_{i_p}) - f(\psi(\sigma)_{i_1}, \cdots, \psi(\sigma)_{i_p}) \Big)\Big]. \nonumber
\end{align}
We note that $(f(\sigma_{i_1} , \cdots, \sigma_{i_p}) - f(\psi(\sigma)_{i_1}, \cdots, \psi(\sigma)_{i_p}) \neq 0$ if and only if $(\sigma_{i_1},\cdots, \sigma_{i_p} ) \neq (\psi(\sigma)_{i_1}, \cdots,  \psi(\sigma)_{i_p})$. The number of such terms is bounded by $C_1 n^p/d^{1/4}$ for some constant $C_1$. Thus the variance of each Gaussian is bounded by $ C_1 n/{d^{1/4}}$. Now using the bound on the expected suprema of a Gaussian process described above, we get that the RHS is $o_d(\sqrt{d})$. 
Thus $\E[e \bone_{\mathcal{F}}] = o_d(\sqrt{d})$. This establishes that $\E[T_1(n,d) - T_2(n,d)] = o_d(\sqrt{d})$. 

Now we look at the $d$-regular problem. The same argument goes through in this case and we see that the optimal value may be attained (up to $o(\sqrt{d})$ corrections) by restricting to the configuration space $\Sigma(\bm^*)$. We note that $\Psi(\cdot)$ is a smooth function which is maximized at $\bm^*$. We set up the Lagrangian
\begin{align}
\Xi = \Psi(\bm) - \lambda( \sum_i m_i - 1) - \sum_i \mu_i m_i. \nonumber 
\end{align}
Setting $\grad\Xi =0$, we have, $\grad \Psi(\bm) = \lambda \bone + \mathbf{\mu}$, where $\mathbf{\mu}= (\mu_1, \cdots, \mu_q)$. Using the complimentary slackness conditions, we have $\mu_i=0$ for all $i \leq q$. It is easy to see now that at the stationary point, condition $(C1)$ is satisfied. Thus $(C1)$ is satisfied for all $\us \in \Sigma(\bm^*)$. 
The argument outlined in the first part of the proof now enforces the desired conclusion. 

{
It remains to prove Lemma \ref{lem:second_moment_control}. We will first need a lemma which bounds  the second moment of the supremum of the Gaussian process $\{ H_f(\us): \us \in [q]^n \}$. 
\begin{lem}
\label{lemma:gaussian_process_bound}
There exists a universal constant $C>0$, depending on $f$, such that 
\begin{align}
\E\Big[ \Big( \max_{\us \in [q]^n} H_f(\us) \Big)^2 \Big] \leq C n^2. \nonumber 
\end{align}
\end{lem}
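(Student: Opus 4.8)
The plan is to deduce this immediately from Lemma~\ref{lemma:exp_var_sup_gaussian}. Write $X_n = \max_{\us \in [q]^n} H_f(\us)$. The elementary identity $\E[X_n^2] = \var(X_n) + (\E[X_n])^2$, combined with the two bounds $\E[X_n] \le C_1 n$ and $\var(X_n) \le C_2 n$ already supplied by Lemma~\ref{lemma:exp_var_sup_gaussian}, yields $\E[X_n^2] \le C_2 n + C_1^2 n^2 \le (C_1^2 + C_2)\, n^2$ for every $n \ge 1$, which is exactly the claim with $C = C_1^2 + C_2$. So the statement is a one-line corollary of the preceding lemma.

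For completeness I would recall why Lemma~\ref{lemma:exp_var_sup_gaussian} applies. The family $\{H_f(\us) : \us \in [q]^n\}$ is a centered Gaussian process indexed by a set of cardinality $q^n$, and a direct expansion of its covariance, using boundedness of $f$ and the normalization $n^{-(p-1)/2}$ built into the definition of $H_f$, shows that the pointwise variance $\var(H_f(\us))$ is bounded uniformly in $\us$ by some $\tau^2 := C(f)\, n$. Hence $\E[X_n] \le \sqrt{2 \tau^2 \log(q^n)} = O(n)$ by the standard maximal inequality for finitely many Gaussians, and $\var(X_n) \le \max_{\us} \var(H_f(\us)) \le \tau^2 = O(n)$ by the variance-of-the-maximum bound of \cite{houdre1995multivariate}; these are precisely the two assertions of Lemma~\ref{lemma:exp_var_sup_gaussian}.

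There is essentially no obstacle here. If one prefers not to route through the variance-of-the-supremum bound, one may instead use the Borell--TIS inequality: $X_n - \E[X_n]$ is sub-Gaussian with proxy variance $\tau^2 = O(n)$, so $\E[(X_n - \E[X_n])^2] = O(n)$ and again $\E[X_n^2] = O(n) + (\E[X_n])^2 = O(n^2)$. Either way, the only quantitative input is the uniform-in-$\us$ estimate $\var(H_f(\us)) = O(n)$, and the bookkeeping in the covariance expansion needed to confirm it is routine; this is the closest the argument comes to a ``hard part''.
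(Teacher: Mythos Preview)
Your proposal is correct and matches the paper's proof essentially line for line: the paper also writes $\E[X_n^2] = (\E[X_n])^2 + \Var(X_n)$ and invokes Lemma~\ref{lemma:exp_var_sup_gaussian} to bound each term. Your additional remarks (recapping the proof of Lemma~\ref{lemma:exp_var_sup_gaussian} and the Borell--TIS alternative) are fine but not needed.
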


\textit{Proof of Lemma \ref{lemma:gaussian_process_bound}: }
We note that 
\begin{align}
\E\Big[ \Big( \max_{\us \in [q]^n} H_f(\us) \Big)^2 \Big] = \E^2 \Big[  \max_{\us \in [q]^n} H_f(\us) \Big] + \Var \Big(  \max_{\us \in [q]^n}  H_f(\us) \Big). \nonumber
\end{align}
and bound each term on the {{\tiny\sf{RHS}}} using Lemma \ref{lemma:exp_var_sup_gaussian}. 

\noindent
Given Lemma \ref{lemma:gaussian_process_bound}, we complete the proof of Lemma \ref{lem:second_moment_control} next. 

\textit{Proof of Lemma \ref{lem:second_moment_control}:} 
We observe
\begin{align}
\E[(T_1(n,d) - T_2(n,d))^2] \leq 2 \Big[  \E[ T_1(n,d)^2] + \E[T_2(n,d)^2 ]\Big]. \nonumber
\end{align}
We bound each term on the RHS separately. First, observe that $\|\Psi\|_{\infty} \leq \|f \|_{\infty}$ and thus 
\begin{align}
T_2(n,d) &= d \Psi(\mathbf{m}^*) + \frac{\sqrt{d}}{n} \Big[ \max_{\underline{\sigma} \in \Sigma(\mathbf{m}^*)} H_f(\underline{\sigma})  \Big] \nonumber \\
|T_2(n,d)| &\leq d \| f \|_{\infty} + \sqrt{d} \Big| \frac{1}{n} \max_{\underline{\sigma} \in \Sigma(\mathbf{m^*})} H_f(\underline{\sigma}) \Big|. \nonumber
\end{align}

Therefore, 
\begin{align}
\E[T_2(n,d)^2] \leq 2 \Big[ d^2 \| f \|_{\infty}^2 + \frac{d}{n^2} \E\Big[ \Big( \max_{\underline{\sigma} \in \Sigma(\mathbf{m^*})} H_f(\underline{\sigma}) \Big)^2  \Big] \Big] = O(1) \nonumber 
\end{align}
where the final bound follows using Lemma \ref{lemma:gaussian_process_bound}. It remains to bound $\E[ (T_1(n, d))^2]$. To this end, we observe that for all $\mathbf{m} \in {\rm{Sim}}$, 
\begin{align}
d \Psi(\mathbf{m}) + \frac{\sqrt{d}}{n} \Big[ \max_{\underline{\sigma} \in \Sigma(\mathbf{m})} H_f(\underline{\sigma})  \Big] &\leq d \| f \|_{\infty} +  \frac{\sqrt{d}}{n} \Big[ \max_{\underline{\sigma} \in [q]^n} H_f(\underline{\sigma}) \Big]\nonumber \\
&\leq d \|f \|_{\infty} + \frac{\sqrt{d}}{n} \Big|\max_{\underline{\sigma} \in [q]^n} H_f(\underline{\sigma})   \Big|  +  \frac{\sqrt{d}}{n} \Big|\min_{\underline{\sigma} \in [q]^n} H_f(\underline{\sigma})   \Big|.\nonumber
\end{align}

Similarly, we have, 
\begin{align}
&d \Psi(\mathbf{m}) + \frac{\sqrt{d}}{n} \Big[ \max_{\underline{\sigma} \in \Sigma(\mathbf{m})} H_f(\underline{\sigma})  \Big] \geq -d \| f \| _{\infty}  + \frac{\sqrt{d}}{n} \min_{ \underline{\sigma} \in [q]^n } H_f(\underline{\sigma}) \nonumber \\
&\geq -d \| f \| _{\infty} - \frac{\sqrt{d}}{n} \Big|  \min_{ \underline{\sigma} \in [q]^n } H_f(\underline{\sigma}) \Big| - \frac{\sqrt{d}}{n} \Big|  \max_{ \underline{\sigma} \in [q]^n } H_f(\underline{\sigma}) \Big| .\nonumber 
\end{align}

Combining, we have, 
\begin{align}
&-d \| f \| _{\infty} - \frac{\sqrt{d}}{n} \Big|  \min_{ \underline{\sigma} \in [q]^n } H_f(\underline{\sigma}) \Big| - \frac{\sqrt{d}}{n} \Big|  \max_{ \underline{\sigma} \in [q]^n } H_f(\underline{\sigma}) \Big| \nonumber\\
&\leq \max_{\mathbf{m} \in {\rm{Sim}} } \Big[d \Psi(\mathbf{m}) + \frac{\sqrt{d}}{n} \Big[ \max_{\underline{\sigma} \in \Sigma(\mathbf{m})} H_f(\underline{\sigma})  \Big]  \Big] \nonumber \\
 &\leq d \|f \|_{\infty} + \frac{\sqrt{d}}{n} \Big|\max_{\underline{\sigma} \in [q]^n} H_f(\underline{\sigma})   \Big|  +  \frac{\sqrt{d}}{n} \Big|\min_{\underline{\sigma} \in [q]^n} H_f(\underline{\sigma})   \Big| \nonumber 
\end{align}
Thus we obtain the bound on the absolute value 
\begin{align}
 \Big|\max_{\mathbf{m} \in {\rm{Sim}} } \Big[d \Psi(\mathbf{m}) + \frac{\sqrt{d}}{n} \Big[ \max_{\underline{\sigma} \in \Sigma(\mathbf{m})} H_f(\underline{\sigma})  \Big]  \Big| \leq d \|f \|_{\infty} + \frac{\sqrt{d}}{n} \Big|\max_{\underline{\sigma} \in [q]^n} H_f(\underline{\sigma})   \Big|  +  \frac{\sqrt{d}}{n} \Big|\min_{\underline{\sigma} \in [q]^n} H_f(\underline{\sigma})   \Big|. \nonumber 
\end{align}
\noindent
We have, by Cauchy-Schwarz, 
\begin{align}
\E[(T_1(n,d))^2] \leq 3 \Big[ d^2 \|f \|_{\infty}^2 + \frac{d}{n^2} \E\Big[\Big( \max_{\underline{\sigma} \in [q]^n} H_f(\underline{\sigma}) \Big)^2 \Big] +  \frac{d}{n^2} \E\Big[\Big( \min_{\underline{\sigma} \in [q]^n} H_f(\underline{\sigma}) \Big)^2 \Big]  \Big] \nonumber
\end{align}

Finally, we note that $\min_{\underline{\sigma} \in [q]^n} H_f(\underline{\sigma}) \stackrel{d}{=} - \max_{\underline{\sigma} \in [q]^n } H_f(\underline{\sigma})$. This reduces the bound on the second term to 
\begin{align}
\E\Big[ \Big( T_1(n,d) \Big)^2 \Big] \leq 3 \Big[ d^2 \|f \|_{\infty}^2 + \frac{2d}{n^2} \E\Big[\Big( \max_{\underline{\sigma} \in [q]^n} H_f(\underline{\sigma}) \Big)^2 \Big]  \Big]   \nonumber 
\end{align}

The proof is now complete upon using Lemma \ref{lemma:gaussian_process_bound}. }


\section*{Acknowledgements}
The author thanks Prof Amir Dembo for suggesting this problem, numerous helpful discussions, and constant encouragement. The author also 
thanks Prof Andrea Montanari and Prof Sourav Chatterjee for many helpful comments, which improved the presentation of the paper. The author thanks Prof Federico Ricci-Tersenghi for pointing out \cite{leuzzi2001sat} to him. The author also thanks the anonymous referee for a careful reading of the manuscript, and for pointing out some errors in the initial version of the paper.

\bibliographystyle{amsalpha}

\bibliography{graphref}

\end{document}